\documentclass[final,5p,times,8pt]{elsarticle}
\usepackage[centertags]{amsmath}
\usepackage{amsmath,amsfonts,amssymb,amsthm,epstopdf,newlfont,graphicx}
\usepackage{fixltx2e,booktabs,bm,enumitem,setspace,empheq,cancel,tikz}
\usetikzlibrary{shapes.geometric, arrows}
\usepackage{tabularx}
\usepackage{longtable}
\usepackage[raggedrightboxes]{ragged2e}
\usepackage{algorithm}
\usepackage{float}
\usepackage[short,nocomma]{optidef}
\usepackage{algpseudocode}
\usepackage{subcaption}
\usepackage[labelfont=bf,format=plain,justification=raggedright,singlelinecheck=false]{caption}
\usepackage{threeparttable,multirow}
\usepackage{amsmath,accents}
\usepackage[LGR,T1]{fontenc}
\newcommand{\test}[1]{{\fontencoding{LGR}\fontfamily{#1}\selectfont κ}}
\newcommand{\Kappa}{\test{porson}}
\usepackage[american]{babel}
\usepackage{natbib}
\biboptions{comma,square,sort&compress}
\usepackage{appendix}
\usepackage[colorlinks=true]{hyperref}

%
\newcommand\hcancel[2][black]{\setbox0=\hbox{$#2$}%
\rlap{\raisebox{.25\ht0}{\textcolor{#1}{\rule{0.7\wd0}{0.75pt}}}}#2} 
\newcommand\hcancelt[2][black]{\setbox0=\hbox{$#2$}%
\rlap{\raisebox{.25\ht0}{\textcolor{#1}{\hspace{0.3mm}\rule{0.7\wd0}{0.75pt}}}}#2} 
\newtheorem{thm}{Theorem}[section]

\newtheorem{cor}{Corollary}[section]

\theoremstyle{definition}

\numberwithin{algorithm}{section}
\numberwithin{equation}{section}
\renewcommand{\theequation}{\thesection.\arabic{equation}}
%

%
\def\simgt{\,\hbox{\lower0.6ex\hbox{$>$}\llap{\raise0.3ex\hbox{$\sim$}}}\,}
\def\simlt{\,\hbox{\lower0.6ex\hbox{$<$}\llap{\raise0.3ex\hbox{$\sim$}}}\,}
\def\simgteq{\,\hbox{\lower0.6ex\hbox{$\ge$}\llap{\raise0.6ex\hbox{$\sim$}}}\,}
\def\simlteq{\,\hbox{\lower0.6ex\hbox{$\le$}\llap{\raise0.6ex\hbox{$\sim$}}}\,}
\def\applteq{\,\hbox{\lower0.6ex\hbox{$\le$}\llap{\raise0.8ex\hbox{$\approx$}}}\,}
\def\applt{\,\hbox{\lower0.6ex\hbox{$<$}\llap{\raise0.5ex\hbox{$\approx$}}}\,}
\DeclareMathAlphabet\mathbfcal{OMS}{cmsy}{b}{n}
\DeclareMathAlphabet{\mathpzc}{OT1}{pzc}{m}{it}
\DeclareMathAlphabet\euscr{U}{eus}{m}{n}

\makeatletter
\def\user@resume{resume}
\def\user@intermezzo{intermezzo}
\newcounter{previousequation}
\newcounter{lastsubequation}
\newcounter{savedparentequation}
\setcounter{savedparentequation}{1}
\renewenvironment{subequations}[1][]{%
      \def\user@decides{#1}%
      \setcounter{previousequation}{\value{equation}}%
      \ifx\user@decides\user@resume 
           \setcounter{equation}{\value{savedparentequation}}%
      \else  
      \ifx\user@decides\user@intermezzo
           \refstepcounter{equation}%
      \else
           \setcounter{lastsubequation}{0}%
           \refstepcounter{equation}%
      \fi\fi
      \protected@edef\theHparentequation{%
          \@ifundefined {theHequation}\theequation \theHequation}%
      \protected@edef\theparentequation{\theequation}%
      \setcounter{parentequation}{\value{equation}}%
      \ifx\user@decides\user@resume 
           \setcounter{equation}{\value{lastsubequation}}%
         \else
           \setcounter{equation}{0}%
      \fi
      \def\theequation  {\theparentequation  \alph{equation}}%
      \def\theHequation {\theHparentequation \alph{equation}}%
      \ignorespaces
}{%
  \ifx\user@decides\user@resume
       \setcounter{lastsubequation}{\value{equation}}%
       \setcounter{equation}{\value{previousequation}}%
  \else
  \ifx\user@decides\user@intermezzo
       \setcounter{equation}{\value{parentequation}}%
  \else
       \setcounter{lastsubequation}{\value{equation}}%
       \setcounter{savedparentequation}{\value{parentequation}}%
       \setcounter{equation}{\value{parentequation}}%
  \fi\fi
  \ignorespacesafterend
}
\makeatother
\newcommand{\C}[1]{\mathcal{#1}}

\newcommand{\F}[1]{\mathbf{#1}}

\newcommand{\MB}[1]{\mathbb{#1}}

\newcommand{\ME}[1]{\euscr{#1}}
\newcommand{\MBS}{\MB{S}}
\newcommand{\MBG}{\MB{G}}
\newcommand{\MBSG}{\hat{\MBG}}
\newcommand{\MBR}{\mathbb{R}}

\newcommand{\MBRP}{\MBR^+}
\newcommand{\MBRzer}{\MBR_0}
\newcommand{\MBRzerP}{\MBRzer^+}
\newcommand{\MBZ}{\mathbb{Z}}
\newcommand{\MBZP}{\MBZ^+}
\newcommand{\MBZzer}{\MBZ_0}
\newcommand{\MBZzerP}{\MBZzer^+}
\newcommand{\MBZe}{\MBZ_e}
\newcommand{\MBZeP}{\MBZe^+}
\newcommand{\MBZzereP}{\MBZ_{0,e}^+}
\newcommand{\MBZOP}{\MBZ_{o}^+}
\newcommand{\MBT}{\mathbb{T}}
\newcommand{\MBJ}{\mathbb{J}}
\newcommand{\MBJP}{\mathbb{J}^+}
\newcommand{\MBK}{\mathbb{K}}
\newcommand{\MBKcanc}[1]{\MBK_{#1,\hcancel{0}}}
\newcommand{\MBN}{\mathbb{N}}
\newcommand{\MFF}{\mathfrak{F}}
\newcommand{\MFP}{\mathfrak{P}}
\newcommand{\MFC}{\mathfrak{C}}

\newcommand{\cancbra}[1]{\hcancel{[}#1\hcancelt{]}}
\newcommand{\sumd}{\sideset{}{'}}

\newcommand{\hu}{\hat{u}}

\newcommand{\hatt}{\hat{t}}
\newcommand{\bmx}{\bm{x}}

\newcommand{\bmt}{\bm{t}}

\newcommand{\bmv}{\bm{v}}

\newcommand{\tilu}{\tilde{u}}

\newcommand{\tilpsi}{\tilde{\psi}}
\newcommand{\homega}{\hat{\omega}}

\newcommand{\bmy}{\bm{y}}

\newcommand{\bmh}{\bm{h}}

\newcommand{\bmzer}{\bm{\mathit{0}}}
\newcommand{\bmone}{\bm{\mathit{1}}}

\newcommand{\hz}{\hat{z}}

\newcommand{\hsigma}{\hat{\sigma}}
\newcommand{\hlambdabar}{\hat \lambdabar}

\newcommand{\FOmega}{\F{\Omega}}
\newcommand{\IFOmega}{\FOmega^{\circ}}

\newcommand{\foralla}{\,\forall_{\mkern-6mu a}\,}
\newcommand{\forallaa}{\,\forall_{\mkern-6mu aa}\,}
\newcommand{\foralle}{\,\forall_{\mkern-6mu e}\,}
\newcommand{\foralls}{\,\forall_{\mkern-6mu s}\,}
\newcommand{\foralll}{\,\forall_{\mkern-4mu l}\,}
\newcommand{\Def}[1]{\text{Def}\left(#1\right)}

\newcommand{\resh}[3]{\text{resh}_{#1,#2}\left(#3\right)}
\newcommand{\reshs}[2]{\text{resh}_{#1}\left(#2\right)}

\newcommand{\multrow}[1]{\begin{tabular}{@{}c@{}} #1 \end{tabular}}
\newcommand{\sigmabar}{\mathord{\sigma\kern-0.6em\raisebox{-1ex}{$\bar{\phantom{\sigma}}$}}}
\newcommand{\sigmabarmax}{\mathord{\sigma_{\max}\kern-1.9em\raisebox{-0.8ex}{$\bar{\phantom{\sigma}}$}}\;\;\;\;\;}
\newcommand{\sigmabarmin}{\mathord{\sigma_{\min}\kern-1.75em\raisebox{-0.8ex}{$\bar{\phantom{\sigma}}$}}\;\;\;\;\;}
\usepackage{mathtools}
\mathtoolsset{showonlyrefs}
\makeatletter
\def\BState{\State\hskip-\ALG@thistlm}
\makeatother
%

%
\MakeRobust{\eqref}
\errorcontextlines\maxdimen

\makeatletter
    \newcommand*{\algrule}[1][\algorithmicindent]{\makebox[#1][l]{\hspace*{.5em}\thealgruleextra\vrule height \thealgruleheight depth \thealgruledepth}}%
\newcommand*{\thealgruleextra}{}
\newcommand*{\thealgruleheight}{.75\baselineskip}
\newcommand*{\thealgruledepth}{.25\baselineskip}

\newcount\ALG@printindent@tempcnta
\def\ALG@printindent{%
    \ifnum \theALG@nested>0
        \ifx\ALG@text\ALG@x@notext
        \else
            \unskip
            \addvspace{-1pt}
            \ALG@printindent@tempcnta=1
            \loop
                \algrule[\csname ALG@ind@\the\ALG@printindent@tempcnta\endcsname]%
                \advance \ALG@printindent@tempcnta 1
            \ifnum \ALG@printindent@tempcnta<\numexpr\theALG@nested+1\relax
            \repeat
        \fi
    \fi
    }%
\usepackage{etoolbox}
\patchcmd{\ALG@doentity}{\noindent\hskip\ALG@tlm}{\ALG@printindent}{}{\errmessage{failed to patch}}
\makeatother

\newbox\statebox
\newcommand{\myState}[1]{%
    \setbox\statebox=\vbox{#1}%
    \edef\thealgruleheight{\dimexpr \the\ht\statebox+1pt\relax}%
    \edef\thealgruledepth{\dimexpr \the\dp\statebox+1pt\relax}%
    \ifdim\thealgruleheight<.75\baselineskip
        \def\thealgruleheight{\dimexpr .75\baselineskip+1pt\relax}%
    \fi
    \ifdim\thealgruledepth<.25\baselineskip
        \def\thealgruledepth{\dimexpr .25\baselineskip+1pt\relax}%
    \fi
    \State #1%
    \def\thealgruleheight{\dimexpr .75\baselineskip+1pt\relax}%
    \def\thealgruledepth{\dimexpr .25\baselineskip+1pt\relax}%
}
\makeatletter
\newcommand{\oset}[3][0ex]{%
  \mathrel{\mathop{#3}\limits^{
    \vbox to#1{\kern-2\ex@
    \hbox{$\scriptstyle#2$}\vss}}}}
\makeatother

\makeatletter
\def\ps@pprintTitle{%
  \let\@oddhead\@empty
  \let\@evenhead\@empty
  \let\@oddfoot\@empty
  \let\@evenfoot\@oddfoot
}
\makeatother
\begin{document}
\begin{frontmatter}
\title{Fourier-Gegenbauer Integral-Galerkin Method for Solving the Advection‐Diffusion Equation With Periodic Boundary Conditions}
\author[Ajman]{Kareem T. Elgindy\corref{cor1}}
\ead{k.elgindy@ajman.ac.ae}
\address[Ajman]{Department of Mathematics and Sciences, College of Humanities and Sciences, Ajman University, P.O.Box: 346 Ajman, United Arab Emirates}
\cortext[cor1]{Corresponding author}
\begin{abstract}
This study presents the Fourier-Gegenbauer Integral-Galerkin (FGIG) method, a novel and efficient numerical framework for solving the one-dimensional advection-diffusion equation with periodic boundary conditions. The FGIG method uniquely combines Fourier series for spatial periodicity and Gegenbauer polynomials for temporal integration within a Galerkin framework, resulting in highly accurate numerical and semi-analytical solutions. Distinctively, this approach eliminates the need for time-stepping procedures by reformulating the problem as a system of integral equations, reducing error accumulation over long-time simulations and improving computational efficiency. Key contributions include exponential convergence rates for smooth solutions, robustness under oscillatory conditions, and an inherently parallelizable structure, enabling scalable computation for large-scale problems. Additionally, the method introduces a barycentric formulation of shifted-Gegenbauer-Gauss quadrature to ensure high accuracy and stability for relatively low P\'{e}clet numbers. Numerical experiments validate the method's superior performance over traditional techniques, demonstrating its potential for extending to higher-dimensional problems and diverse applications in computational mathematics and engineering.
\end{abstract}
\begin{keyword}
Advection-Diffusion; Barycentric Quadrature; Fourier series; Gegenbauer polynomials; Gegenbauer Quadrature; Integral-Galerkin; Semi-Analytical.
\end{keyword}

\end{frontmatter}
\section{Introduction}
\label{Int}
The advection-diffusion (AD) equation is a fundamental PDE governing the transport of a scalar quantity within a fluid medium. This scalar quantity may represent various physical properties such as heat, pollutants, or chemical concentrations. The AD equation encapsulates the combined influence of advection, the transport driven by fluid flow, and diffusion, the spreading arising from random molecular motion. This equation finds wide-ranging applications in fields such as fluid mechanics, environmental science, biophysics, and materials science. Accurate solutions to the AD equation are indispensable for critical applications such as environmental monitoring and control, engineering design, climate modeling, and medical applications. Consequently, the AD equation is a crucial tool for understanding and predicting the behavior of various natural and engineered systems.

Due to its significance, over the years, researchers have developed a wide array of numerical methods to address the AD equation, each tailored to specific applications, stability requirements, or computational constraints. In the following, we categorize and summarize some notable advancements and methods in this field through the past decade. Each method has its strengths and weaknesses, and the choice of the most appropriate method depends on the specific problem and desired accuracy. The methods are grouped based on their underlying principles and ordered chronologically to provide a clear progression of developments in this domain: \textbf{Finite Difference (FD) Methods (FDMs).} \citet{appadu2017computational} compared the upwind, non-standard FD, and unconditionally positive FD schemes for linear and nonlinear AD-reaction problems, and analyzed their dissipative and dispersive properties. \citet{khalsaraei2017efficient} evaluated and improved positivity in FDMs, which were applied to solve parabolic equations that model advection-diffusion processes coupled with reaction terms. \citet{solis2017numerical} presented non-standard FD schemes for modeling infection dynamics. \citet{al2024numerical} developed a 1D model for instantaneous spills in river systems by numerically solving the advection-dispersion equation along with the shallow water equations using FDMs. \textbf{Finite Element Methods (FEMs).} \citet{cerfontaine2016formulation} designed FEM for borehole heat exchangers. \citet{wang2022discrete} introduced nonlinear correction techniques ensuring discrete extremum principles. \citet{wang2024discrete} presented a nonlinear correction technique for FEMs applied to AD problems. \citet{jena2023one} utilized an improvised quartic-order cubic B-spline approach combined with the Crank-Nicolson method and FEM. \textbf{Finite Volume Methods (FVMs).} \citet{sejekan2016improving} enhanced diffusive flux accuracy. \citet{chernyshenko2017hybrid} combined FVM-FEM for fractured media. \citet{kramarenko2017nonlinear} developed a nonlinear correction for subsurface flows. \citet{hwang2023efficient} utilized a second-order, positivity-preserving central-upwind scheme based on the FVM to create a numerical scheme that captures contact discontinuities in scalar transport within a shallow water flow environment. \citet{mei2024unified} proposed a unified finite volume physics informed neural network  approach to improve the accuracy and efficiency of solving heterogeneous PDEs, which combines sub-domain decomposition, finite volume discretization, and conventional numerical solvers. \textbf{Semi-Lagrangian Methods.} \citet{bonaventura2016flux} proposed a fully conservative, flux-form discretization of linear and nonlinear diffusion equations using a semi-Lagrangian approach. \citet{bokanowski2016semi} developed high-order, unconditionally stable schemes for first- and second-order PDEs, including AD, using a semi-Lagrangian approach with discontinuous Galerkin (DG) elements. \citet{bakhtiari2016parallel} developed a second-order, unconditionally stable semi-Lagrangian scheme with a spatially adaptive Chebyshev octree. \citet{liu2025semi} developed a semi-Lagrangian radial basis function partition of unity closest point method for solving AD equations on surfaces. \textbf{Moving Mesh Methods.} \citet{bergmann2022second} presented a second-order accurate space-time finite volume scheme for solving the AD equation on moving Chimera grids. \citet{sultan2023stable} developed a moving FD meshing scheme for solving linear and nonlinear AD equations. \citet{terekhov2023dynamic} developed a moving mesh FVM for simulating blood flow and coagulation. \textbf{Multiscale Methods.} 
\citet{lee2016multiscale} utilized Fourier decompositions for multiscale velocity fields. \citet{le2017numerical} explored a multiscale FEM (MsFEM) for heterogeneous media with strong advection. \citet{wang2024partitioned} proposed two partitioned schemes based on the variational multiscale method to solve the coupled system modeling solute transport in blood and the surrounding arterial wall. \textbf{Stabilization Techniques.} \citet{le2016stabilization} utilized invariant measures for non-coercive problems.
\citet{benedetto2016order} investigated stabilization in virtual element methods.
\citet{gonzalez2017w} stabilized explicit Runge-Kutta methods via W-methods.
\citet{biezemans2025msfem} improved the MsFEM for accurately solving AD equations with highly oscillatory coefficients (heterogeneous media) and potentially dominant advection. \textbf{Spectral Methods.} \citet{cardone2017exponentially} combined exponential fitting for spatial discretization with an adapted implicit-explicit (IMEX) method for time integration to solve AD problems. \citet{laouar2023efficient} combined a spectral Galerkin approach with Legendre polynomials as basis functions and Gauss-Lobatto quadrature for the AD equation with constant and variable coefficients under mixed Robin boundary conditions. \citet{takagi2024implementation} investigated the possibility of using quantum annealers for flow computations by solving the one-dimensional (1D) AD equation using spectral methods. \textbf{B-splines.} \citet{korkmaz2018numerical} proposed a method that combined trigonometric cubic B-splines for spatial discretization and the Rosenbrock implicit method for time integration. \citet{jena2021computational} applied an octic B-spline collocation approach combined with a Crank-Nicolson scheme. \citet{palav2025redefined} presented a collocation method using redefined fourth-order uniform hyperbolic polynomial B-splines to solve the AD equation. \textbf{Differential Quadrature Method.} \citet{gharehbaghi2016explicit} applied differential quadrature methods for semi-infinite domains. \textbf{Meshless Methods.} \citet{ilati2016remediation} applied meshless techniques for groundwater contamination. \citet{suchde2017flux} introduced a modification to meshfree generalized FDMs to include local balances, resulting in an approximate conservation of numerical fluxes. \citet{askari2017numerical} proposed a meshless method of lines to solve the 1D AD equation, where radial basis functions were used for spatial discretization, and the resulting system of ODEs was solved using the fourth-order Runge-Kutta method. \textbf{DG Methods.} 
\citet{zhang2016operator} integrated positivity-preserving DG with operator splitting for stiff problems. \citet{fiengo2017discontinuous} adapted DG for sediment transport in river networks. \citet{wang2024discontinuous} developed and analyzed a class of primal DG methods for magnetic AD problems. \textbf{Reduced Basis Methods.} \citet{dal2017numerical} analyzed multi-space reduced basis preconditioning for FEM systems. \textbf{Lattice Boltzmann Methods (LBMs).} 
\citet{karimi2016current} analyzed the limitations of existing LBMs in preserving fundamental properties (maximum principle, non-negativity) of AD solutions. \citet{yan2017two} developed the TRT-LBM for transport phenomena. \textbf{Flux Reconstruction.} \citet{romero2017direct} extended the direct flux reconstruction scheme to triangular grids for stability in AD. \textbf{Data Assimilation Methods.} \citet{penenko2017direct} presented a direct variational data assimilation algorithm for the 1D AD model. The algorithm aimed to improve the accuracy of model predictions by adjusting the uncertainty function based on in situ measurements. \textbf{Analytical Solutions.} \citet{lewis2017extension} expanded Stefan-type solutions for complex systems.
\citet{ho2024recursive} proposed a semi-analytical model to simulate the multispecies transport of decaying contaminants, where recursive analytical solutions (Laplace transform and generalized integral transform) were used for efficient computation. 

It is crucial to highlight that diffusion-dominated transport is central to many physical phenomena, from heat dissipation in electronics to the dispersion of pollutants in stagnant water bodies. While numerous numerical methods exist for the AD equation, approaches tailored for accuracy and efficiency in diffusion-dominated regimes remain essential. The FGIG method, as we will demonstrate, is particularly well-suited for such cases due to its inherent stability and ability to minimize temporal error accumulation. This study introduces a novel Fourier-Gegenbauer Integral-Galerkin (FGIG) method for solving the 1D AD equation with periodic boundary conditions. The FGIG method combines the strengths of both Fourier series and Gegenbauer polynomial expansions to achieve high accuracy, efficiency, and stability. In particular, the FGIM employs a Fourier basis to approximate the spatially periodic solution and Gegenbauer polynomials for accurate temporal integration to effectively capture the dynamics of advection and diffusion processes for relatively low P\'{e}clet numbers. Unlike traditional Galerkin methods that yield systems of ordinary differential equations requiring time integration with initial conditions, the FGIG method leads to a system of integral equations that can be solved directly without imposing initial or boundary conditions. Some of the distinctive features offered by the proposed method include: (i) The absence of time-stepping procedures allows for minimizing the accumulation of temporal errors and improving accuracy, (ii) avoiding iterative time integration schemes, by directly solving the resulting system of integral equations, can potentially reduce the computational cost, and (iii) the use of a barycentric formulation of the shifted-Gegenbauer-Gauss (SGG) quadrature, known for its stability and efficiency in evaluating definite integrals, contribute to the accurate and stable computation of the solution and its spatial derivative. 

This work makes the following key contributions: (i) Proposes a novel FGIG method combining Fourier series and Gegenbauer polynomials to solve the 1D AD equation with periodic boundary conditions, (ii) derives a semi-analytical solution using the FGIG framework, offering a very accurate alternative for specific cases, (iii) demonstrates exponential convergence in both space and time for sufficiently smooth solutions, outperforming traditional methods like FD and FVMs with polynomial convergence rates, (iv) ensures stability for relatively low P\'{e}clet numbers, particularly for Gegenbauer parameter values in the range $[0, 0.5]$, even under highly oscillatory conditions, with condition numbers comparable to those of the integration matrix used for time discretization, (v) the FGIM is inherently parallelizable, as its formulation involves independent systems of integral equations. This feature allows for efficient computation on multi-core processors, and significantly reduces the computational time for large-scale problems, unlike traditional methods, which may require sequential computations, (vi) unlike FD, finite element, and semi-Lagrangian methods, which often rely on iterative time-stepping schemes, the FGIM directly solves the problem using integral equations. This eliminates the accumulation of temporal errors, (vii) the barycentric formulation of the SGG quadrature boosts the accuracy and efficiency in computing definite integrals, a step forward compared to conventional integration techniques, (viii) the shift from a differential-equation framework to an integral-equation formulation eliminates the dependency on initial and boundary conditions during the computations and possibly offer a new way to handle similar problems in the field, and furthermore, (ix) the FGIM framework allows for the derivation of semi-analytical solutions. This feature provides a highly accurate alternative for verifying the numerical results, a capability not generally available in many competing methods. To the best of our knowledge, this work presents the first numerical method for solving the 1D AD equation with periodic boundary conditions that uniquely combines the Fourier series for spatial periodicity and Gegenbauer polynomials for temporal integration within a Galerkin framework. While Gegenbauer polynomials and their closely related shifted ultraspherical polynomials have been employed in other contexts, such as solving variable-order fractional PDEs \cite{kumar2019gegenbauer}, fractional AD equations with generalized Caputo derivatives \cite{nagy2024accurate}, and enhancing Runge-Kutta stability \cite{o2019runge}, their integration into a Fourier-Galerkin framework for solving the 1D AD equation represents a novel and impactful contribution.

The remainder of this paper is structured as follows. Section \ref{sec:PN} introduces the sets of symbols and notations used throughout the paper to represent complex mathematical formulas and expressions concisely. These notations generally follow the writing convention established in \cite{Elgindy2023a,elgindy2024numerical,elgindy2024optimal}. Section \ref{sec:PS} introduces the problem under study. Section \ref{sec:FGIG1} presents the FGIG method, describing its formulation and implementation. In Section \ref{sec:CCC1}, we analyze the computational cost and time complexity of the FGIG method. Section \ref{sec:ESA1} presents a rigorous analysis of the error and stability characteristics of the method. Section \ref{sec:FGIG12} derives a semi-analytical solution within the FGIG framework, offering a highly accurate alternative for specific cases. Section \ref{sec:CRAC1} provides computational results that validate the accuracy and efficiency of the FGIG method. Sections \ref{sec:Conc} and \ref{sec:LCM1} conclude the paper with a summary of key findings, a brief discussion of the limitations of the FGIG method, and potential future research directions.  Mathematical proofs are presented in Section \ref{sec:MP1}.
\label{sec:MP1}

\section{Mathematical Notations}
\label{sec:PN}
\noindent\textbf{Logical Symbols.} The  symbols $\forall, \foralla, \forallaa, \foralle, \foralls$, and $\foralll$ stand for the phrases ``for all,'' ``for almost all,'' ``for any,'' ``for each,'' ``for some,'' and ``for a relatively large'' in respective order. $\Re(.)$ and $\Im(.)$ denote the real and imaginary parts of a complex number, respectively.\\[0.5em]
\textbf{List and Set Notations.} $\MFC, \MFF$, and $\MFP$ denote the set of all complex-valued, real-valued, and piecewise continuous real-valued functions, respectively. Moreover, $\MBR$, $\MBRzerP$, $\MBZ, \MBZP, \MBZzerP, \MBZOP$, $\MBZeP$, and $\MBZzereP$ denote the sets of real numbers, nonnegative real numbers, integers, positive integers, non-negative integers, positive odd integers, positive even integers, and non-negative even integers, respectively. The notations $i$:$j$:$k$ or $i(j)k$ indicate a list of numbers from $i$ to $k$ with increment $j$ between numbers, unless the increment equals one where we use the simplified notation $i$:$k$. The list of symbols $y_1, y_2, \ldots, y_n$ is denoted by $\left. y_i \right|_{i=1:n}$ or simply $y_{1:n}$, and their set is represented by $\{y_{1:n}\}\,\foralla n \in \MBZP$ and variable $y$. The list of ordered pairs $(x_1,y_1), (x_1,y_2), \ldots,$ $(x_1, y_n), (x_2,y_1), \ldots,$ $(x_m,y_n)$ is denoted by $(x_{1:m}, y_{1:n})$ and their set is denoted by $\{(x_{1:m}, y_{1:n})\}$. We define $\MBJ_n = \{0:n-1\}, \MBJP_n = \MBJ_n \cup \{n\}$, and $\MBN_n = \{1:n\}\,\foralla n \in \MBZP$. $\MBS_n^{T} = \left\{t_{n,0:n-1}\right\}$ is the set of $n$ equally-spaced points such that $t_{n,j} = T j/n\, \forall j \in \MBJ_n$. $\MBK_n = \{-n/2:n/2\}, \MBK'_n = \MBK\backslash\{n/2\}, \MBK''_n = \MBK\backslash\{\pm n/2\}$, and $\MBKcanc{n} = \MBK_n\backslash\{0\} \foralla n \in \MBZeP$. $\MBG_n^{\lambda} = \left\{z_{n,0:n}^{\lambda}\right\}$ is the set of Gegenbauer-Gauss (GG) zeros of the $(n+1)$st-degree Gegenbauer polynomial with index $\lambda > -1/2$, and $\MBSG_{L,n}^{\lambda} = \left\{\hz_{n,0:n}^{\lambda}: \hz_{n,0:n}^{\lambda} = L \left(z_{n,0:n}^{\lambda}+1\right)/2\right\}$ is the SGG points set in the interval $[0,L]\,\foralla n \in \MBZP$; cf. \cite{Elgindy201382,elgindy2018high,elgindy2018optimal}. We also define the augmented SGG (ASGG) points set $\MBSG_{L,n}^{\lambda, +} = \MBSG_{L,n}^{\lambda} \cup \{L\}$. $\FOmega_{a,b}$ denotes the closed interval $[a, b] \foralla a, b \in \MBR$. $\IFOmega$ is the interior of the set $\FOmega$. The specific interval $[0, T]$ is denoted by $\FOmega_{T}\,\forall T > 0$. The Cartesian product $\FOmega_{L} \times \FOmega_{T} = \{(x,t): x \in \FOmega_L, t \in \FOmega_T\}$ is denoted by $\FOmega_{L \times T} \foralla L, T \in \MBR^+$. $\bm{\beta} = [-\beta, \beta]\,\forall \beta > 0$, and ${\F{C}_{T,\beta} } = \left\{ {x + iy:x \in {\FOmega_{T}},y \in \bm{\beta}} \right\}\;\forall \beta  > 0$.\\[0.5em]
\textbf{Special Mathematical Constant.} $u_R$ is the unit round-off of the floating-point system; typically $u_R \sim 10^{-16}$ for double-precision.\\[0.5em]
\textbf{Function Notations.} $\delta_{n,m}$ is the usual Kronecker delta function of variables $n$ and $m$. $\Gamma$ denotes the Gamma function. $g^*$ denotes the complex conjugate of $g \foralla g \in \MFC$. For convenience, we shall denote $g(t_{n})$ by $g_n \foralla t_n \in \MBR, n \in \MBZ$, unless stated otherwise. Similarly, for a function with a subscript, such as $g_N$, we shall denote $g_N(t_n)$ by $g_{N,n}$. We extend this writing convention to multidimensional functions; for example, to evaluate a bivariate function $u$ at some discrete points set $\left\{(x_{0:m-1},t_{0:n-1})\right\}$, we write $u_{l,j}$ to simply mean $u(x_l,t_j)\,\forall (l,j) \in \MBJ_{m \times n}$. $u_{0:m-1,0:n-1}$ stands for the list of function values $u_{0,0}, u_{0,1}, \ldots, u_{0,n-1}, u_{1,0}, \ldots, u_{1,n-1}, \ldots,$ $u_{m-1,n-1}$.\\[0.5em]
\textbf{Integral Notations.} We denote $\int_0^{b} {h(t)\,dt}$ and $\int_a^{b} {h(t)\,dt}$ by $\C{I}_{b}^{(t)}h$ and $\C{I}_{a, b}^{(t)}h$, respectively, $\foralla$ integrable $h \in \MFC, a, b \in \MBR$. If the same argument occurs in both the subscript and the superscript, the definite integral is performed with respect to (w.r.t.) any dummy integration variable. For instance, the notation $\C{I}_t^{(t)}h$ stands for $\int_0^t {h(.)\,d(.)} \foralla$ dummy integration variable. If the integrand function $h$ is to be evaluated at any other expression of $t$, say $u(t)$, we express $\int_0^{b} {h(u(t))\,dt}$ and $\int_a^b {h(u(t))\,dt}$ with a stroke through the square brackets as $\C{I}_{b}^{(t)}h\cancbra{u(t)}$ and $\C{I}_{a,b}^{(t)}h\cancbra{u(t)}$ in respective order. The notation $\C{I}_{\FOmega_{a,b}}^{(x)} h$ simply means the definite integral $\int_a^b {h(x)\,dx}\,\foralla \FOmega_{a,b} \subseteq \MBR$. One can easily extend all of these notations to multivariate functions. For example, for a bivariate function $h(x,y)$, the notation 
$\C{I}_y^{(y)}h$ stands for $\int_0^y {h(x,.)\,d(.)} \foralla$ dummy integration variable.\\[0.5em] 
\textbf{Inner Product Notations.} Given $f, g \in \MFC$ and defined on $\FOmega_{a,b}$, we denote their weighted inner product w.r.t. the weight function $w$ by $(f,g)_w$, and define it as $(f,g)_w = \C{I}_{a,b}^{(x)} (f g^* w)$, where $w \in \MFF$ is a non-negative, integrable function defined on $\FOmega_{a,b}$. The inner product $(f,g)_{n,w}$ stands for the discrete inner product that is a discrete approximation of the continuous inner product $(f,g)_w$, calculated using $n$ points, $\foralla n \in \MBZP$. The subscript $w$ is omitted from either inner product notations when $w(x) = 1 \forall x \in \FOmega_{a, b}$.\\[0.5em] 
\textbf{Space and Norm Notations.} $\MBT_{T}$ is the space of $T$-periodic, univariate functions $\foralla T \in \MBRP$. $\Def{\FOmega}$ is the space of all functions
defined on the set $\FOmega$. $C^k(\FOmega)$ is the space of $k$ times continuously differentiable functions on ${\FOmega}\,\forall k \in \MBZzerP$. $L^p({\FOmega})$ is the Banach space of measurable functions $u \in \Def{\FOmega}$ such that ${\left\| u \right\|_{{L^p}}} = {\left( {{\C{I}_{\FOmega}}{{\left| u \right|}^p}} \right)^{1/p}} < \infty\,\forall 1 \le p < \infty$. The $L^p$ space on the rectangle $\FOmega_{L \times T}$ is denoted by $L^p(\FOmega_{L\times T})$ and consists of all scalar measurable functions $u \in \Def{\FOmega_{L \times T}}$ for which ${\left\| u \right\|_{{L^p}}} = {\left( {{\C{I}_{\FOmega_L}} {\C{I}_{\FOmega_T}}{{\left| u \right|}^p}} \right)^{1/p}} < \infty\,\forall 1 \le p < \infty$. The space 
\[{H^s}({\FOmega_{T}}) = \left\{ {u \in {L_{loc}}({\FOmega_{T}}),\;{D^\alpha }u \in {L^2}({\FOmega_{T}})\;\forall \left| \alpha  \right| \le s\;} \right\}\;\forall s \in \MBZzerP,\] 
is the inner product space with the inner product 
\[{(u,v)_s} = \sum\nolimits_{\left| \alpha  \right| \le s} {\C{I}_{{\FOmega_{T}}} ^{(t)}\left( {{D^\alpha }u\,{D^\alpha }v} \right)},\]
where ${{L_{loc}}({\FOmega_{T}} )}$ is the space of locally integrable functions on ${\FOmega_{T}}$ and ${{D^\alpha }u}$ denotes any derivative of $u$ with multi-index $\alpha$. Moreover,
\[\C{H}_{T}^s = \left\{ {u \in {H^s}({\FOmega_{T}}),\;{u^{(s)}} \in {BV},\;{u^{(0:s - 1)}}(0) = {u^{(0:s - 1)}}(T)} \right\},\]
where $u^{(0:s-1)}$ denotes the column vector of derivatives $[u, u'$, $\ldots, u^{(s-1)}]^{\top}$, and $\displaystyle{{BV} = \left\{ {u \in {L^1}({\FOmega_{T}}):{{\left\| u \right\|}_{BV}} < \infty } \right\}}$ with the norm ${{\left\| u \right\|}_{BV}} = \sup \left\{ {\C{I}_{T}^{(x)}(u\phi '),\;\phi  \in \C{D}({\FOmega_{T}}),\;{{\left\| \phi  \right\|}_{{L^\infty }}} \le 1} \right\}$ such that 
\[\C{D}({\FOmega_{T}}) = \left\{u \in {C^\infty }({\FOmega_{T}}):{\text{supp}}(u)\text{ is a compact subset }\text{ of }{\FOmega_{T}} \right\}.\] We define the Sobolev space $\ME{H}^s(\FOmega) = \{u \in L^2(\FOmega): D^{\alpha} u \in L^2(\FOmega)\;\forall |\alpha| \le s\} \foralla$ open set $\FOmega \subset \MBR$. This space is a Hilbert space with the inner product
\[{\left( {f,g} \right)^{s,\FOmega}} = \sum\limits_{\left| \alpha  \right| \le s} {\left( {{D^\alpha }f,{D^\alpha }g} \right)}  = \sum\limits_{\left| \alpha  \right| \le s} {\C{I}_{\FOmega}^{(x)}[{D^\alpha }f\,{{({D^\alpha }g)}^*}]}.\]
For analytic functions, we define the space
\[\C{A}_{T,\beta} = \{v \in \C{H}_{T}^{\infty}: v \text{ is analytic in some open set containing }\F{C}_{T,\beta}\},\]
with the norm ${\left\| v \right\|_{{\C{A}_{T,\beta}}}} = {\left\| v \right\|_{{L^\infty }({{\F{C}}_{T,\beta}})}}$. For convenience of writing, we shall denote ${\left\|  \cdot  \right\|_{{L^2}({\FOmega_{T}})}}$ by $\left\|  \cdot  \right\|$, and call a function $v \in \C{A}_{T,\beta}$ \textit{``a $\beta$-analytic function''} if $v$ is analytic on ${\F{C}_{T,\infty}}$ and $\displaystyle{{\lim _{\beta  \to \infty }}\frac{{{{\left\| v \right\|}_{{\C{A}_{T,\beta} }}}}}{{{e^{{\omega _\beta }}}}} = 0}$, where ${\omega _{\gamma}} = 2\pi \gamma/T\,\forall \gamma \in \MBR$. $L^{\infty}({\FOmega})$ is the space of all essentially bounded measurable functions defined on a measurable set $\FOmega$ with $L^{\infty}$ norm defined as $\left\|f\right\|_{L^{\infty}(\FOmega)} = \sup |f(x)| = \inf\{M \ge 0: |f(x)| \le M\,\forallaa x \in \FOmega\}$. $\left\|\cdot\right\|_1$ and $\left\|\cdot\right\|_2$ denote the usual $l_1$-norm and Euclidean norm of vectors, respectively.\\[0.5em]
\textbf{Vector Notations.} We shall use the shorthand notations $\bmt_N$ and $g_{0:N-1}$ to stand for the column vectors $[t_{N,0}, t_{N,1}, \ldots$, $t_{N,N-1}]^{\top}$ and $[g_0, g_1, \ldots, g_{N-1}]^{\top}\,\forall N \in \MBZP, g \in \MFC$, and any variable $t$ in respective order. The row vector $[t_{N,0}, t_{N,1}, \ldots$, $t_{N,N-1}]$ is denoted by $\bmt_N^{\top}$ or $[t_{N,0:N-1}]$. $\Sigma {g_{0:N-1}}$ is the sum of the elements of the vector $g_{0:N-1}$. In general, $\foralla h \in \MFC$ and vector $\bmy$ whose $i$th-element is $y_i \in \MBR$, the notation $h(\bmy)$ stands for a vector of the same size and structure of $\bmy$ such that $h(y_i)$ is the $i$th element of $h(\bmy)$. Moreover, by $\bmh(\bmy)$ or $h_{1:m}\cancbra{\bmy}$ with a stroke through the square brackets, we mean $[h_1(\bmy), \ldots, h_m(\bmy)]^{\top}\,\foralla m$-dimensional column vector function $\bmh$, with the realization that the definition of each array $h_i(\bmy)$ follows the former notation rule $\foralle i$. $\Sigma h_{1:m}\cancbra{\bmy}$ is the sum of the elements of the vector function $h_{1:m}\cancbra{\bmy}$. Furthermore, if $\bmy$ is a vector function, say $\bmy = \bmy(t)$, then we write $h(\bmy(\bmt_N))$ and $\bmh(\bmy(\bmt_N))$ to denote $[h(\bmy(t_0)), h(\bmy(t_1)), \ldots, h(\bmy(t_{N-1}))]^{\top}$ and $[\bmh(\bmy(t_0)), \bmh(\bmy(t_1)), \ldots, \bmh(\bmy(t_{N-1}))]^{\top}$ in respective order.\\[0.5em] 
\textbf{Matrix Notations.} $\F{O}_n, \F{1}_n$, and $\F{I}_n$ stand for the zero, all ones, and the identity matrices of size $n$. $\F{C}_{n,m}$ indicates that $\F{C}$ is a rectangular matrix of size $n \times m$; moreover, $\F{C}_n$ denotes a row vector whose elements are the $n$th-row elements of $\F{C}$, except when $\F{C}_n = \F{O}_n, \F{1}_n$, or $\F{I}_n$, where it denotes the size of the matrix. For convenience, a vector is represented in print by a bold italicized symbol while a two-dimensional matrix is represented by a bold symbol, except for a row vector whose elements form a certain row of a matrix where we represent it in bold symbol as stated earlier. For example, $\bmone_n$ and $\bmzer_n$ denote the $n$-dimensional all ones- and zeros- column vectors, while $\F{1}_n$ and $\F{O}_n$ denote the all ones- and zeros- matrices of size $n$, respectively. \Kappa$(\F{A})$ denotes the condition number of a square matrix $\F{A}$. $\resh{m}{n}{\F{A}}$ and $\reshs{n}{\F{A}}$ are the matrices obtained by reshaping $\F{A}$ into an $m$-by-$n$ matrix and a square matrix of size $n$, respectively while preserving their column-wise ordering from $\F{A}$. Finally, the notations $\otimes, \odot$, and $[.;.]$ represent the Kronecker product, Hadamard product, and the usual vertical concatenation, respectively.

\section{Problem Statement}
\label{sec:PS}
In this study, we focus on the following 1D AD equation with periodic boundary conditions:
\begin{subequations}
\begin{equation}\label{eq:AD1}
{u_t} + \mu {u_x} = \nu {u_{xx}},\quad \forall (x,t) \in \FOmega_{L\times T},\quad \foralls L, T \in \MBR^+,
\end{equation}
with the given initial condition
\begin{equation}\label{eq:AD2}
u(x,0) = u_0(x),\quad \forall x \in \FOmega_L,
\end{equation}
and the periodic Dirichlet boundary conditions
\begin{equation}\label{eq:AD3}
u(x+L,t) =  u(x,t),\quad \forall t \in \FOmega_T,\quad x \in \MBRzerP.
\end{equation}
\end{subequations}
The nonnegative constants $\mu$ and $\nu$ denote the advection velocity and diffusion coefficient, respectively. Eqs. \eqref{eq:AD1}-\eqref{eq:AD3} describe the initial-boundary value problem of the AD equation in strong form, which we refer to by Problem S. The solution to the problem, $u$, represents the concentration or distribution of a scalar quantity (like heat, pollutants, or particles) in a medium over space and time, taking into account both advection and diffusion processes. Problem S serves as a valuable benchmark problem for testing the accuracy, stability, and efficiency of numerical methods. It also acts as a prototype for developing more sophisticated numerical techniques that can be applied to more complex problems in higher dimensions and with more realistic boundary conditions.

\section{The FGIG Method}
\label{sec:FGIG1}
The classical solution of Problem $S$ must be at least twice differentiable in space. To allow a larger class of solutions, we rewrite the time-integrated formulation of the PDE in a weak form. To this end, let us partially integrate both sides of the PDE w.r.t. time on $\FOmega_t$ and impose the initial condition \eqref{eq:AD2} to obtain \textit{``the time-integrated (TI) form of the PDE''} as follows:
\begin{equation}
u + \C{I}_t^{(t)}(\mu {u_x} - \nu {u_{xx}}) = {u_0}.
\end{equation}
Multiplying both sides of the equation by an arbitrary function $\varphi \in \MBT_L \cap \ME{H}^1(\IFOmega_L)$ and partially integrating over $\FOmega_L$ yields
\begin{equation}\label{eq:WFTIPDE}
\left( {u + \C{I}_t^{(t)}(\mu {u_x} - \nu {u_{xx}}),\varphi } \right) = \left( {{u_0},\varphi } \right),
\end{equation}
which represents \textit{``the weak TI (WTI) form of the PDE.''} Integration by parts gives
\begin{equation}\label{eq:IBP1}
\left( {u + \mu \C{I}_t^{(t)}{u_x},\varphi } \right) + \left( {\nu \C{I}_t^{(t)}{u_x},{\varphi _x}} \right) = \left( {{u_0},\varphi } \right) + \left. {\nu \varphi \C{I}_t^{(t)}{u_x}} \right|_0^L.
\end{equation}
The last term vanishes due to the spatial periodicity of $u$ and $\varphi$; therefore
\begin{equation}\label{eq:IBP1Koky1}
\left( {u + \mu \C{I}_t^{(t)}{u_x},\varphi } \right) + \left( {\nu \C{I}_t^{(t)}{u_x},{\varphi _x}} \right) = \left( {{u_0},\varphi } \right).
\end{equation}
Now, let $u_x = \psi \foralls \psi \in L^2(\FOmega_{L \times T}), u(0,t) =  u(L,t) = g(t)\,\forall t \in \FOmega_T$, and impose the left boundary condition to obtain
\begin{equation}\label{eq:AS1}
u = \C{I}_x^{(x)} \psi + g.
\end{equation} 
Substituting Eq. \eqref{eq:AS1} into Eq. \eqref{eq:IBP1Koky1} yields:
\begin{equation}\label{eq:FF1}
\left( {\left[ {\C{I}_x^{(x)} + \mu \C{I}_t^{(t)}} \right]\psi ,\varphi } \right) + \left( {\nu \C{I}_t^{(t)}\psi ,{\varphi _x}} \right) = \left( {{u_0} - g,\varphi } \right).
\end{equation}
We refer to this new form by \textit{``the spatial-derivative-substituted (SDS) WTI form of the PDE,''} and call its solution, $\psi$,\textit{``the solution spatial derivative (SSD) of Problem S.''} Notice how the SDS-WTI form of the PDE requires only that the weak solution for Problem S be square integrable, as shown by Theorem \ref{thm:1}, which is much less restrictive than requiring twice differentiability of $u$. 

The spatial periodic nature of $u$ allows us to approximate \textit{``the time-dependent offset solution (TDOS)''}, $\C{I}_x^{(x)} \psi$, by a truncated Fourier basis expansion. In particular, let us consider the $N/2$-degree, $L$-spatially-periodic (sp), truncated Fourier series with time-dependent coefficients, ${}_N\C{I}_x^{(x)}\psi$, written in the following modal form:
\begin{equation}\label{eq:FI1nn1}
{}_N\C{I}_x^{(x)} \psi(x,t) = \sum\limits_{k \in \MBK_N} {{\tilpsi_k(t)}\; {e^{i{\omega _k}x}}},\quad \foralls N \in \MBZeP.
\end{equation}
Notice that there are $N + 1$ unknown coefficient functions for us to determine. Fortunately, since 
\begin{equation}\label{eq:NKR1}
\C{I}_0^{(x)} \psi = 0 = \Sigma {{\tilpsi_{-N/2:N/2}\cancbra{t}}},
\end{equation}
by definition, we actually need to find $N$ unknown coefficients, as the last coefficient is automatically computed via Eq. \eqref{eq:NKR1}. In this work, we seek all coefficients with nonzero indices first, and then easily calculate $\tilpsi_0$ by using the following formula:
\begin{equation}\label{eq:Smart1}
\tilpsi_0(t) = -\sum\limits_{k \in \MBKcanc{N}} {{\tilpsi_k(t)}}.
\end{equation}
To determine the unknown coefficients, we require that the approximation ${I_N}\psi$ satisfies the SDS-WTI form of the PDE \eqref{eq:FF1} for $\varphi = e^{i \omega_n x}\;\foralle n \in \MBKcanc{N}$. That is, the spectral approximate SSD is the one that satisfies
\begin{equation}
\left( {\left[ {\C{I}_x^{(x)} + \mu \C{I}_t^{(t)}} \right]\psi ,{e^{i{\omega _n}x}}} \right) + \left( {\nu \C{I}_t^{(t)}\psi,\partial_x {e^{i{\omega _n}x}}} \right) = \left( {{u_0} - g,{e^{i{\omega _n}x}} } \right),
\end{equation}
$\forall n \in \MBKcanc{N}$. This can be reduced further into the following form:
\begin{equation}\label{eq:FF3}
\left( {\left[ {\C{I}_x^{(x)} + (\mu-i \omega_n \nu) \C{I}_t^{(t)}} \right]\psi ,{e^{i{\omega _n}x}}} \right) = \left( {{u_0},{e^{i{\omega _n}x}} } \right),\quad \forall n \in \MBKcanc{N},
\end{equation}
by realizing that $\left( {g,{e^{i{\omega _n}x}} } \right) = 0\,\forall n \in \MBKcanc{N}$. Using the Fourier quadrature rule based on the equally-spaced nodes set $\MBS_N^{L} = \{{x_{N,0:N-1}}\}$: 
\[{Q_F}(f) = \frac{L}{N} \Sigma{f_{0:N-1}},\quad \forall f \in \MBT_T,\]
we can define the following discrete inner product
\[{(u,v)_N} = \frac{L}{N} \left({{u_{0:N-1}^{\top}}v_{0:N-1}^*}\right),\quad\foralla u, v \in \MFC.\]
Consider now the $N/2$-degree, $L$-periodic Fourier interpolant ${I_N}u_0$ that matches $u_0$ at the set of nodes $\MBS_N^L$ so that
\begin{equation}\label{eq:FI1}
{I_N}u_0(x) = \sum\limits_{k \in \MBK_N} {\frac{{{{\tilu_{0,k}}}}}{{{c_k}}}{e^{i{\omega _k}x}}},
\end{equation}
where 
\begin{empheq}[left={c_k = }\empheqbiglbrace]{align*}
  1, &\quad k \in \MB{K}''_N,\\
  2, &\quad k =  \pm N/2,
\end{empheq}
and ${\tilu_{0,k}}$ is the discrete Fourier transform (DFT) interpolation coefficient given by
\[{\tilu_{0,k}} = \frac{1}{L}{\left( {u_0,{e^{i \omega_k x}}} \right)_N} = \frac{1}{N}\sum\limits_{j \in \MBJ_N} {{u_{0,j}}{e^{ - i \homega_{k j,N}}}},\quad \forall k \in \MB{K}_N,\]
with $\tilu_{0,N/2} = \tilu_{0,-N/2}$ and $\homega_{k,N} = 2 \pi k/N$; cf. \cite{Elgindy2023a}. Since $u_0$ is periodic, the DFT coefficients satisfy the additional conjugate symmetry condition $\tilu_{0,n} = \tilu_{0,-n}^*\,\forall n \in \MBK_N$. This implies that for real-valued, periodic signals, only half of the DFT coefficients need to be computed, as the other half can be obtained through conjugation.

Substituting \eqref{eq:FI1nn1} and \eqref{eq:FI1} into \eqref{eq:FF3} yields the following linear system of integral equations:
\begin{gather}\label{eq:FF3_1}
\sum\limits_{k \in \MBKcanc{N}} {\left[ {\tilpsi_k(t) + \omega_k (\nu \omega_n + \mu i) \C{I}_t^{(t)} \tilpsi_k} \right] \left(e^{i \omega_k x}, e^{i \omega_n x}\right)}\\ = \sum\limits_{k \in \MBKcanc{N}} {\frac{\tilu_{0,k}}{c_k} \left(e^{i \omega_k x}, e^{i \omega_n x}\right)},\quad \forall n \in \MBKcanc{N}.
\end{gather}
The orthogonality property of complex exponentials 
\begin{equation}
\left({e^{i{\omega _k}x}},{e^{i{\omega _n}x}}\right) = L\;\delta_{k,n},\quad \forall \{k,n\} \subset \MBK_N,
\end{equation}
allows us to reduce System \eqref{eq:FF3_1} into the following linear system of integral equations:
\begin{equation}\label{eq:nnmm2}
\ME{A}_n^{(t)}\;{\tilpsi}_n(t) = \gamma_n,\quad \forall n \in \MBKcanc{N},
\end{equation}
where 
\begin{subequations}
	\begin{gather}
\ME{A}_n^{(t)} = 1 + \alpha_n \C{I}_t^{(t)},\quad \alpha_n = \omega_n (\nu \omega_n + \mu i),\\
\gamma_n = \frac{\tilu_{0,n}}{c_n},\quad \forall n \in \MBKcanc{N}.
\end{gather}
\end{subequations}

Before we continue our prescription of the proposed method, it is interesting to note here that unlike the standard Galerkin method, which yields a system of ordinary differential equations requiring initial conditions for time integration, the present approach results in the system of integral equations \eqref{eq:nnmm2} that can be solved without imposing any initial or boundary conditions. The approximate solution, ${}_Nu$, can be directly computed at any point in space and time by adding the boundary function $g$ to the TDOS through Eq. \eqref{eq:AS1}, after reconstructing the latter using Eqs. \eqref{eq:FI1nn1} and \eqref{eq:Smart1}.

To slightly improve the accuracy of the system model \eqref{eq:nnmm2}, we can increase the degree of the Fourier interpolant of $u_0, I_Nu_0$, since $u_0$ is already given, and the first $N$ DFT interpolation coefficients of two Fourier interpolations of degrees $n/2$ and $m/2$ for the same function are generally not the same when $n$ and $m$ are distinct $\foralla n, m \in \MBZeP: N \le \min\{n,m\}$. This is because the coefficients in DFT interpolation depend on the number of sample points (or the degree of the interpolation) and how they are distributed, which changes with $n$ and $m$. Generally, as the degree of interpolation increases, the approximation tends to capture more details of the function. This means that the discrete coefficients for a higher degree interpolation are often more accurate in representing the true Fourier coefficients of the function, especially for smooth functions. With this observation, we can accurately re-represent $u_0$ by an $N_0/2$-degree, $L$-periodic Fourier interpolant, ${I_{N_0}}u_0$, that matches $u_0$ at the set of nodes $\MBS_{N_0}^L \foralls N_0 \in \MBZeP: N_0 > N$. Using \cite[Eq. (4.2)]{Elgindy2023a}, we can write
\begin{equation}\label{eq:FI1newu01}
{I_{N_0}}u_0(x) = \sumd\sum\limits_{k \in \MBK_{N_0}} {{\hu_{0,k}}\,{e^{i{\omega _k}x}}},
\end{equation}
where the primed sigma denotes a summation in which the last term is omitted, and ${\hu_{0,k}}$ is the DFT interpolation coefficient given by
\begin{equation}\label{eq:dfjvdhkb1}
{\hu_{0,k}} = \frac{1}{L}{\left( {u_0,{e^{i \omega_k x}}} \right)_{N_0}} = \frac{1}{N_0}\sum\limits_{j \in \MBJ_{N_0}} {{u_{0,j}}{e^{ - i \homega_{k j,N_0}}}},\quad \forall k \in \MBK'_{N_0}.
\end{equation}
By following the same procedure prescribed earlier in this section, we can easily show that the linear system of integral equations \eqref{eq:nnmm2} can be replaced with the more accurate model:
\begin{equation}\label{eq:nnmm3}
\ME{A}_n^{(t)}\;{\tilpsi}_n(t) = \hu_{0,n},\quad \forall n \in \MBKcanc{N}.
\end{equation}

Now, we shift gears, and turn our attention into how we can effectively solve system \eqref{eq:nnmm3} numerically. Notice first that while complex exponentials are excellent as basis functions for representing periodic functions, they are often less suitable for nonperiodic functions. On the other hand, orthogonal polynomials are specifically designed to approximate nonperiodic functions over finite intervals, making them a more natural choice for nonperiodic integral equations. Since the coefficient vector function ${\tilpsi_{-N/2:N/2}}\cancbra{t}$ is generally nonperiodic, we solve System \eqref{eq:nnmm3} using a variant of the shifted-Gegenbauer (SG) integral pseudospectral (SGIPS) method of \citet{elgindy2018optimal} and \citet{Elgindy2023a}. The latter methods use the barycentric SGG quadratures, constructed using the stable barycentric representation of shifted Lagrange interpolating polynomials and the explicit barycentric weights for the SGG points, well known for their stability, efficiency, and superior accuracy. These barycentric SGG quadratures allow us to construct the SG integration matrices (SGIMs) (aka the SG operational matrices of integration) that can effectively evaluate the sought definite integration approximations using matrix–vector multiplications, often leading into well-conditioned systems of algebraic equations. In the present work, we generate the necessary barycentric SGIMs using \cite[Eq. (4.38)]{Elgindy20161}, which directly defines the required barycentric SGIMs in terms of the barycentric Gegenbauer integration matrices (GIMs), derived in \cite{Elgindy20171}. This modification to the methods of \citet{Elgindy2023a,elgindy2018optimal} skips the need to shift the quadrature nodes and weights and Lagrange polynomials from their original time domain $\FOmega_{-1,1}$ into the shifted time domain $\FOmega_T$, and directly constructs the desired SGIM by premultiplying the usual GIM by a scalar multiple, unless the approximate solution values are required at non-collocation time nodes. That is, if we denote the 1st-order barycentric SGIM by ${}_{T}\F{Q}$, then \cite[Eq. (4.38)]{Elgindy20161} immediately tells us that
\begin{equation}\label{eq:NewSGIM1}
{}_{T}\F{Q} = \frac{T}{2} \F{Q},
\end{equation} 
where $\F{Q}$ is the 1st-order barycentric GIM based on the stable barycentric representation of Lagrange interpolating polynomials and the explicit barycentric weights for the GG points, as described in \citet{Elgindy20171}. 

To collocate System \eqref{eq:nnmm3} in the SG physical space, we first write the system at the SG set of mesh points $\MBSG_{T,M}^{\lambda} = \{\hatt_{M,0:M}^{\lambda}\} \foralls M \in \MBZP$:
\begin{equation}\label{eq:nnmm3Hi1}
{\tilpsi}_{n,j} + \alpha_n \C{I}_{t_j}^{(t)}\;{\tilpsi}_n = \hu_{0,n},\quad \forall j \in \MBJP_M, n \in \MBKcanc{N}.
\end{equation}
Next, we discretize System \eqref{eq:nnmm3Hi1} at the SG mesh set $\MBSG_{T,M}^{\lambda}$ with the aid of Eq. \eqref{eq:NewSGIM1} to obtain the following $N \times (M+1)$ linear systems of algebraic equations:
\begin{equation}\label{eq:nnmm3Hi2}
{\tilpsi}_{n,j} + \alpha_n\;{}_TQ_j \;{\tilpsi}_{n,0:M} = \hu_{0,n},\quad \forall j \in \MBJP_M, n \in \MBKcanc{N},
\end{equation}
which can be written in the following matrix form:
\begin{equation}\label{eq:nnmm3Hi2_2}
{}_T\F{A}^{(n,M)}\;{\tilpsi}_{n,0:M} = \hu_{0,n} \bmone_{M+1},\quad \forall n \in \MBKcanc{N},
\end{equation}
where 
\begin{equation}\label{eq:habiby1}
{}_T\F{A}^{(n,M)} = \F{I}_{M+1} + \alpha_n\;{}_{T}\F{Q},
\end{equation}
and
\[{\tilpsi_{n,0:M}} = \left[ {\begin{array}{*{20}{c}}
\tilpsi_n(\hatt_{M,0}^{\lambda})&\tilpsi_n(\hatt_{M,1}^{\lambda})&\ldots&\tilpsi_n(\hatt_{M,M}^{\lambda})
\end{array}} \right]^{\top},\quad \forall n \in \MBKcanc{N}.\]
Solving System \eqref{eq:nnmm3Hi2_2} provides $N$ Fourier coefficients at the time collocation points. The values of the $(N+1)$st coefficient at the same nodes can be recovered via Eq. \eqref{eq:Smart1} as explained earlier. The solution $u$ can therefore be readily computed at the collocation points by using Eqs. \eqref{eq:AS1} and \eqref{eq:FI1nn1}:
\begin{equation}\label{eq:FI1nn1Dec22}
{}_{N,M}u\left(x_{N,j},\hatt_{M,l}^{\lambda}\right) = {}_N\C{I}_{x_{N,j}}^{(x)} \psi\left(x,\hatt_{M,l}^{\lambda}\right) + g\left(\hatt_{M,l}^{\lambda}\right),
\end{equation}
$\forall \left(x_{N,j},\hatt_{M,l}^{\lambda}\right) \in \MBS_N^L \times \MBSG_{T,M}^{\lambda}$. For faster computations of Formula \eqref{eq:FI1nn1Dec22} compared to running explicit for loops, we can quickly synthesize ${}_{N,M}u$ at the collocation points using the following useful formula, written in matrix form:
\begin{gather}\label{eq:FI1nn1Dec222}
\text{resh}_{M+1,N}(_{N,M}u_{0:N-1,0:M}) = \text{resh}_{M+1,N+1}(\tilpsi_{-N/2:N/2}\cancbra{\bmt_M^{\lambda}})\;{e^{i \omega_{-N/2:N/2} {\bmx_N^{\top}}}}\\
+ g_{0:M} \otimes \bmone_N^{\top},
\end{gather}
where $g_{0:M} = g\left(\bmt_M^{\lambda}\right)$. Moreover, to determine the solution $u$ at any non-collocation temporal points in $\FOmega_{L \times T}$, we can approximate the coefficients' values using the inverse SGG transform, written in the following Lagrange form:
\begin{equation}\label{sec:ort:eq:Lagint1}
	\tilpsi_n(t) \approx \tilpsi_{n,0:M}^{\top}\,\C{L}_{0:M}^{(\lambda)}\cancbra{t},\quad \forall t \notin \MBSG_{T,M}^{\lambda},
\end{equation}
where $\C{L}_{0:M}^{(\lambda)}\cancbra{t}$ is the shifted Lagrange interpolating polynomial vector function in barycentric-form, whose elements functions are defined by
\begin{equation}\label{eq:new1}
\C{L}_{l}^{(\lambda )}(t) = \frac{{\xi _{l}^{(\lambda )}}}{{{t} - \hatt_{{M},l}^{\lambda }}}/\sum\limits_{j \in \MBJP_M} {\frac{{\xi _{j}^{(\lambda )}}}{{{t} - \hatt_{{M},j}^{\lambda }}}},\quad \forall l \in \MBJP_{M},
\end{equation}
and the barycentric weights, $\xi_{0:M}^{(\lambda )}$, associated with the SGG points can be expressed explicitly in terms of the corresponding Christoffel numbers $\varpi _{M,0:M}^{(\lambda )}$ in the following algebraic form:
\begin{equation}\label{eq:baryweight1}
	\xi _{l}^{(\lambda )} = 2{( - 1)^l}\sqrt {{4^\lambda }{T^{ - 2(1 + \lambda )}}\left( {T - \hatt_{M,l}^{\lambda }} \right)\,\hatt_{M,l}^{\lambda }\,\varpi _{M,l}^{(\lambda )}},\quad \forall l \in \MBJP_M,
\end{equation}
or in the following more numerically stable trigonometric form
\begin{equation}\label{eq:newbaryform14May20222}
\xi _{l}^{(\lambda )} = {\left( { - 1} \right)^l}{\left( {\frac{2}{T}} \right)^\lambda }\sin \left( {\cos^{ - 1}\left( {\frac{2\,\hatt_{M,l}^{\lambda }}{T} - 1} \right)} \right)\sqrt {\varpi _{M,l}^{(\lambda )}},\quad \forall l \in \MBJP_M.
\end{equation}
cf. \cite{elgindy2018optimal,Elgindy2023a}. 

It is noteworthy to mention that one key advantage of the proposed FGIG method lies in its ability to not only compute the solution within the desired domain but also accurately determine its spatial derivative as a byproduct of Eq. \eqref{eq:FI1nn1}:
\begin{equation}\label{eq:TSSKR1}
{}_Nu_x(x,t) = {}_N\psi(x,t) \approx i \sum\limits_{k \in \MBK_N} {\omega _k {\tilpsi_k(t)}\; {e^{i{\omega _k}x}}},\quad \forall (x,t) \in \FOmega_{L \times T},
\end{equation}
which offers a comprehensive information about the solution's characteristics. We can swiftly compute ${}_Nu_x$ at the collocation points using the following efficient formula, written in matrix form:
\begin{gather}\label{eq:FI1nn1Dec2223}
\text{resh}_{M+1,N}^{\top}(_{N,M}u_{x,0:N-1,0:M}) = i\;\left((\omega_{-N/2:N/2}^{\top}\otimes\bmone_N) \odot {e^{i {\bmx_N}\omega_{-N/2:N/2}^{\top}}}\right)\\
\times\;\text{resh}_{M+1,N+1}^{\top}(\tilpsi_{-N/2:N/2}\cancbra{\bmt_M^{\lambda}}),
\end{gather}
where $_{N,M}u_{x,j,l} = {}_{N,M}u_x\left(x_{N,j},\hatt_{M,l}^{\lambda}\right)\;\forall \left(x_{N,j},\hatt_{M,l}^{\lambda}\right) \in \MBS_N^L \times \MBSG_{T,M}^{\lambda}$.
The FGIG method's implementation is outlined in the flowchart of Figure \ref{fig:fgig_flowchart}.

Another interesting feature in the proposed method appears in the process of recovering $u$ from $\C{I}_x^{(x)} \psi$, as described by Eq. \eqref{eq:AS1}, which is entirely free from truncation errors, thus maintaining the full precision attained in the calculation of ${}_N\C{I}_x^{(x)} \psi$. 

A third key feature of the current method lies in its ability to bypass traditional time-stepping procedures. Instead, it directly solves the linear systems \eqref{eq:nnmm3Hi2}) globally using a single, coarse SGG time grid. This is particularly beneficial in diffusion-dominated scenarios, where accurate temporal resolution with exponential convergence can be achieved even with very coarse grids. By circumventing time-stepping, the method minimizes temporal error accumulation, a common pitfall in such methods, and ensures a more faithful representation of the diffusion process.

\begin{figure}[ht!]
\begin{center}
\resizebox{0.5\textwidth}{!}{%
\begin{tikzpicture}[node distance=1.5cm]

  \tikzstyle{startstop} = [rectangle, rounded corners, minimum width=2.5cm, minimum height=0.8cm,text centered, draw=black, fill=red!30]
  \tikzstyle{process} = [rectangle, minimum width=2.5cm, minimum height=0.8cm, text centered, draw=black, fill=orange!30]
  \tikzstyle{decision} = [rectangle, minimum width=2.5cm, minimum height=0.8cm, text centered, draw=black, fill=yellow!30]
  \tikzstyle{arrow} = [thick,->,>=stealth]

  \node (start) [startstop] {The FGIG Algorithm};

  \node (step1) [process, below of=start] {Determine the DFT coefficients of $u_0$ using \eqref{eq:dfjvdhkb1}};
  
  \node (step2) [process, below of=step1] {Construct the collocation matrix using \eqref{eq:habiby1}};
  
  \node (step3) [process, below of=step2] {Solve the linear systems \eqref{eq:nnmm3Hi2_2h1} for the first N/2 coefficients};
  
  \node (step4) [process, below of=step3] {Determine the last N/2 coefficients using \eqref{eq:Remark1}};
  
  \node (step5) [process, below of=step4] {Determine the zeroth-coefficient using \eqref{eq:Smart1HH1}};
  
  \node (step6) [process, below of=step5] {Determine the collocated solution using \eqref{eq:FI1nn1Dec222}};
  
  \node (decision1) [decision, below of=step6] {Collocated SSD required?};
  
  \node (step7a) [process, below of=decision1] {Determine the collocated SSD using \eqref{eq:TSSKR1}};
  
  \node (decision2) [decision, below of=step7a] {Solution/SSD at noncollocation points required?};
  
  \node (step8a) [process, below of=decision2] {Determine the necessary coefficients' values using \eqref{sec:ort:eq:Lagint1}};
  
  \node (step8b) [process, below of=step8a] {Compute the required solution's values using  \eqref{eq:AS1} and \eqref{eq:FI1nn1}};
  
  \node (end) [startstop, below of=step8b] {End};

  \draw [arrow] (start) -- (step1);
  \draw [arrow] (step1) -- (step2);
  \draw [arrow] (step2) -- (step3);
  \draw [arrow] (step3) -- (step4);
  \draw [arrow] (step4) -- (step5);
  \draw [arrow] (step5) -- (step6);
  \draw [arrow] (step6) -- (decision1);
  \draw [arrow] (decision1) -- node[anchor=east] {yes} (step7a);
  \draw [arrow] (step7a) -- (decision2);
  \draw [arrow] (decision2) -- node[anchor=east] {yes} (step8a);
  \draw [arrow] (step8a) -- (step8b);
  \draw [arrow] (step8b) -- (end);
  \draw [arrow] (decision1.west) -- ++(-3.5,0) node[anchor=south] {no} |- (end);
  \draw [arrow] (decision2.west) -- ++(-1.5,0) node[anchor=south] {no} |- (end);
  
\end{tikzpicture}
}
\end{center}
\caption{\centering Flowchart of the FGIG Algorithm.}
\label{fig:fgig_flowchart}
\end{figure}
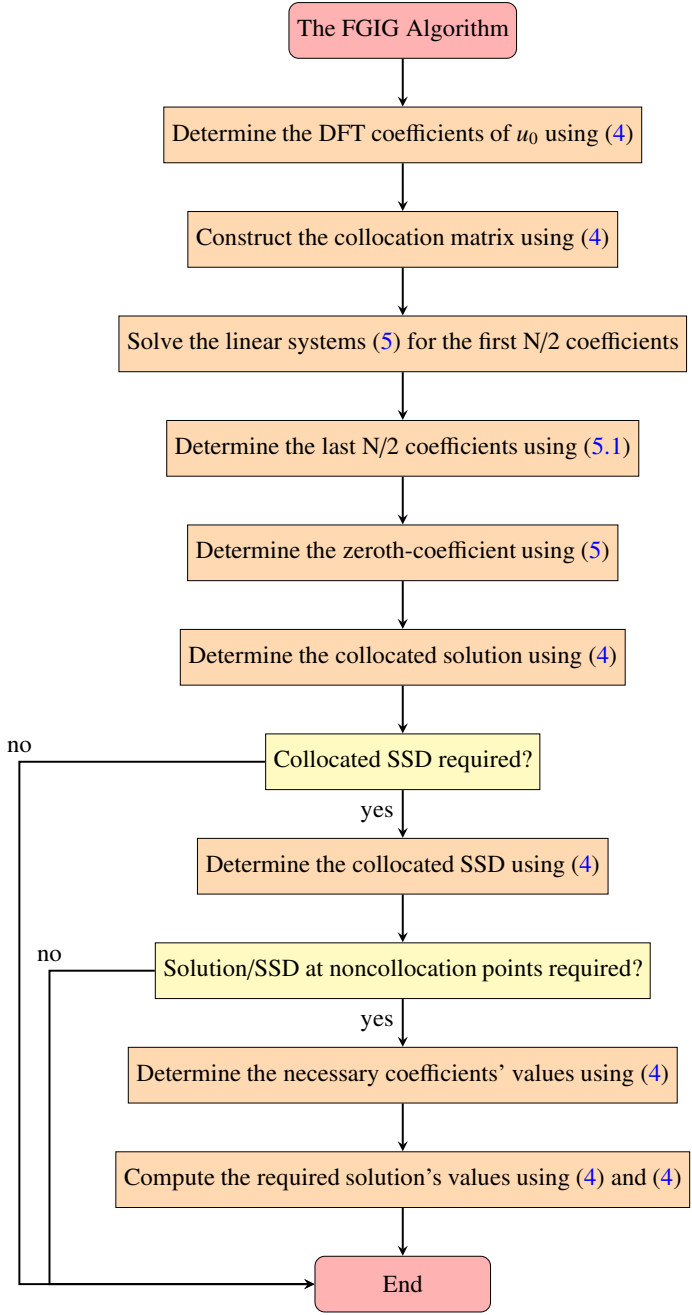

\section{Computational Cost and Time Complexity}
\label{sec:CCC1}
In Section \ref{sec:FGIG1}, we demonstrated that only $N$ Fourier coefficients of the series \eqref{eq:FI1nn1} need to be determined, as the remaining coefficient can be computed directly using Eq. \eqref{eq:NKR1}. Remarkably, the following theorem demonstrates that only the first $N/2$ coefficients need to be determined, since the remaining coefficients can be efficiently computed using a conjugate symmetry condition.

\begin{thm}\label{thm:Remark1}
The time-dependent Fourier coefficients, $\tilpsi_{-N/2:N/2}$, satisfy the conjugate symmetry condition 
\begin{equation}\label{eq:Remark1}
\tilpsi_{-n} = \tilpsi_{n}^*,\quad \forall n \in \MBKcanc{N}.
\end{equation}
\end{thm}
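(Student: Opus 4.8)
The plan is to exploit the real-valuedness of the initial datum $u_0$ together with the structural symmetry of the modal integral equations \eqref{eq:nnmm3} under the index reflection $n \mapsto -n$, closing the argument with a uniqueness statement for the resulting Volterra equation.

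First I would record two elementary symmetries. Since $\omega_{-n} = -\omega_n$,
\begin{equation*}
\alpha_{-n} = \omega_{-n}(\nu\omega_{-n} + \mu i) = (-\omega_n)(-\nu\omega_n + \mu i) = \omega_n(\nu\omega_n - \mu i) = \alpha_n^*,
\end{equation*}
so conjugating a modal coefficient is the same as reflecting its index. Next, because $u_0$ is real-valued the nodal data $u_{0,j}$ entering \eqref{eq:dfjvdhkb1} are real, hence the DFT interpolation coefficients obey the conjugate-symmetry relation $\hu_{0,-n} = \hu_{0,n}^*$ for every $n \in \MBKcanc{N}$. Here I would remark that $N < N_0$ forces $N/2 \le N_0/2 - 1$, so both $\pm n$ stay inside the index set $\MBK'_{N_0}$ on which \eqref{eq:dfjvdhkb1} is defined and never coincide with the truncated mode $N_0/2$; in particular the relation is valid at the edge case $n = N/2$.

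Then I would conjugate the $n$-th equation of \eqref{eq:nnmm3}. Since complex conjugation commutes with the (real) integration operator $\C{I}_t^{(t)}$, conjugating $\tilpsi_n + \alpha_n\,\C{I}_t^{(t)}\tilpsi_n = \hu_{0,n}$ and substituting $\alpha_n^* = \alpha_{-n}$ and $\hu_{0,n}^* = \hu_{0,-n}$ yields
\begin{equation*}
\tilpsi_n^*(t) + \alpha_{-n}\,\C{I}_t^{(t)}\tilpsi_n^* = \hu_{0,-n}, \qquad \text{i.e.}\qquad \ME{A}_{-n}^{(t)}\,\tilpsi_n^* = \hu_{0,-n}.
\end{equation*}
Thus $\tilpsi_n^*$ solves precisely the Volterra integral equation of the second kind that defines $\tilpsi_{-n}$ in \eqref{eq:nnmm3}. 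Invoking uniqueness of its solution — equivalently, differentiating once to pass to the initial value problem $y' + \alpha_{-n}y = 0$, $y(0) = \hu_{0,-n}$, whose unique solution is $y(t) = \hu_{0,-n}\,e^{-\alpha_{-n}t}$ — I conclude $\tilpsi_{-n} = \tilpsi_n^*$, which is \eqref{eq:Remark1} for positive $n$; the case of negative $n$ is the complex conjugate of this, so the identity holds on all of $\MBKcanc{N}$.

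I do not anticipate a genuine obstacle. The only points requiring care are (a) deriving $\hu_{0,-n} = \hu_{0,n}^*$ from the real-valuedness of $u_0$ (rather than from mere periodicity) and checking the index bookkeeping so that $\pm n$ both avoid the truncated mode $N_0/2$, and (b) citing a well-posedness/uniqueness result for the linear Volterra equation \eqref{eq:nnmm3} — or, more cleanly, simply exhibiting the closed-form solution $\tilpsi_n(t) = \hu_{0,n}\,e^{-\alpha_n t}$ (obtained from $\tilpsi_n(0) = \hu_{0,n}$ and $\tilpsi_n' + \alpha_n\tilpsi_n = 0$) and reading off the symmetry directly. The same conjugation argument transfers verbatim to the discretized systems \eqref{eq:nnmm3Hi2_2}, since conjugating ${}_T\F{A}^{(n,M)} = \F{I}_{M+1} + \alpha_n\,{}_T\F{Q}$ (with ${}_T\F{Q}$ real) produces ${}_T\F{A}^{(-n,M)}$ and conjugating the right-hand side $\hu_{0,n}\,\bmone_{M+1}$ produces $\hu_{0,-n}\,\bmone_{M+1}$, so the discrete coefficient vectors satisfy $\tilpsi_{-n,0:M} = \tilpsi_{n,0:M}^*$ as well.
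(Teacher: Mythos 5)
Your proof is correct and follows essentially the same route as the paper: conjugate the modal equation for index $n$, use $\alpha_{-n}=\alpha_n^*$ and $\hu_{0,-n}=\hu_{0,n}^*$, and identify the result with the equation defining $\tilpsi_{-n}$. The only cosmetic difference is that you argue at the level of the continuous Volterra equation \eqref{eq:nnmm3} (making the uniqueness step and the origin of the DFT symmetry explicit) before transferring to the discrete system, whereas the paper conjugates the discretized system \eqref{eq:nnmm3Hi2_2} directly using the realness of $\F{I}_{M+1}$ and ${}_T\F{Q}$.
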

\begin{proof}
Let $k = n \in \MBKcanc{N}$, and notice first that 
\begin{equation}
\alpha_{-n} = \omega_{-n} (\nu \omega_{-n} + \mu i) = -\omega_n (-\nu \omega_n + \mu i) = \alpha_n^*.
\end{equation}
Taking the conjugate of the $n$th System \eqref{eq:nnmm3Hi2_2} while realizing the conjugate symmetries of $\hu_{0,n}$ and $\alpha_n$ yields
\begin{gather}\label{eq:nnmm3Hi2_2JJ1}
\left({}_T\F{A}^{(n,M)}\;{\tilpsi}_{n,0:M}\right)^* = \hu_{0,n}^* \bmone_{M+1}\\
\Rightarrow \left(\F{I}_{M+1} + \alpha_n\;{}_{T}\F{Q}\right)^*\;{\tilpsi}_{n,0:M}^* = \hu_{0,-n} \bmone_{M+1}\\
\Rightarrow \left(\F{I}_{M+1} + \alpha_{-n}\;{}_{T}\F{Q}\right)\;{\tilpsi}_{n,0:M}^* = \hu_{0,-n} \bmone_{M+1},
\end{gather}
since $\F{I}_{M+1}$ and ${}_{T}\F{Q}$ are real matrices. This matches the system for 
$k = -n$, from which the proof is established. 
\end{proof}

The aftermath of Theorem \ref{thm:Remark1} is at least twofold: (i) It shows that we only need to solve the following $N/2$ systems of linear equations:
\begin{equation}\label{eq:nnmm3Hi2_2h1}
{}_T\F{A}^{(n,M)}\;{\tilpsi}_{n,0:M} = \hu_{0,n} \bmone_{M+1},\quad \forall n \in \MBN_{N/2},
\end{equation}
for the first $N/2$ Fourier coefficients, instead of solving the $N$ systems \eqref{eq:nnmm3Hi2_2}; the last $N/2$ coefficients are computed efficiently using Eq. \eqref{eq:Remark1}, and (ii) it provides further a more efficient alternative for computing $\tilpsi_0$ using the following formula:
\begin{equation}\label{eq:Smart1HH1}
\tilpsi_0(t) = -2 \sum\limits_{k \in \MBN_{N/2}} {\Re\left(\tilpsi_k(t)\right)},
\end{equation}
which requires adding the real parts of only half the series terms of Eq. \eqref{eq:Smart1}.

Since the first-order barycentric GIM, $\F{Q}$, is typically a dense, non-symmetric matrix, solving each system in \eqref{eq:nnmm3Hi2_2h1} using highly optimized direct algorithms such as LU decomposition has a computational complexity of approximately $O(M^3)\,\foralll M$. Consequently, the total computational cost of solving Systems \eqref{eq:nnmm3Hi2_2h1} is $O(N M^3)$ $\foralll M, N$. The computation of ${}_{N,M}u$ via \eqref{eq:FI1nn1Dec222} requires $O(M N^2)$ operations for the matrix multiplication, plus $O(M N)$ scalar multiplications required by the Kronecker product. Thus, the total cost, including the cost of adding the resulting matrices, is $O(M N^2)\,\foralll M, N$. This analysis shows that the overall cost of solving the linear systems and recovering the collocated approximate solution, ${}_{N,M}u$, is 
\[\left\{ \begin{array}{l}
O(N M^3),\quad \foralll M/N,\\
O(M N^2),\quad \foralll N/M.
\end{array} \right.\]

If the collocated spatial derivative approximation, ${}_{N,M}u_x$, is also required, one may estimate the additional cost as follows: First, observe that computing the Kronecker product in Eq. \eqref{eq:FI1nn1Dec2223} has a cost of $O(N^2)$. Similarly, the Hadamard product also incurs a cost of $O(N^2)$. The subsequent matrix multiplication between the resulting matrix and the reshaped matrix costs $O(M N^2)$. Finally, scaling each entry of the resulting matrix by $i$ incurs a cost of $O(N M)$, which is negligible compared to the matrix multiplication cost. Therefore, the total additional cost is $O(M N^2)$, which matches the computational cost order of recovering ${}_{N,M}u$.

Parallel computing can significantly reduce the overall wall-clock time required to solve the linear systems. Since the $N/2$ systems are independent of each other, they can be distributed across multiple cores or processors, allowing simultaneous computation. In particular, assuming that there are $P$ available processing cores, and the $N/2$ systems are distributed evenly across these cores, the total computational time using parallel computing, $\ME{T}_{\text{parallel}}$, is approximately:
\begin{equation}\label{eq:Wow1}
\ME{T}_{\text{parallel}} = \frac{\ME{T}_{\text{serial}}}{\min\{P,N/2\}} + \ME{T}_{\text{overhead}},
\end{equation}
where $\ME{T}_{\text{serial}} = O(N M^3)$ is the serial time to solve the $N/2$ systems, and $\ME{T}_{\text{overhead}}$ is the additional time incurred due to the parallel computing process itself from tasks such as communication, synchronization, and workload distribution. These are generally negligible if the workload per system, $O(M^3)$, is sufficiently large compared to the overhead. If the overheads are minimal, the computational time is reduced significantly to approximately $O(N M^3/\min\{P,N/2\})$ by Eq. \eqref{eq:Wow1}. It is important to understand here that parallel computing is not  useful for small datasets, as the associated overhead can outweigh the potential gains from parallelization. In such cases, a standard sequential ``for'' loop might be more efficient. It is noteworthy to mention also here that the constant nature of the matrix $\F{Q}$ allows us to precompute and store it prior to executing the code for solving these systems, reducing the runtime overhead. Moreover, the numerical quadrature induced by $\F{Q}$ (or ${}_T\F{Q}$) through matrix-vector multiplication converges exponentially, enabling nearly exact approximations with relatively few quadrature nodes \cite{Elgindy201382,Elgindy20161,Elgindy20171,elgindy2018high}. Consequently, Systems \eqref{eq:Smart1HH1} can be solved efficiently and with high accuracy using a relatively small $M$ values, significantly improving the computational efficiency without compromising precision.

\section{Error and Stability Analyses}
\label{sec:ESA1}
Let $E_N = u - {}_Nu = \C{I}_x^{(x)} \psi - {}_N\C{I}_x^{(x)} \psi\,\forall N \in \MBZeP$ denote the truncation error of the truncated Fourier series expansion \eqref{eq:FI1nn1}. The squared $L^2$-norm of the error is defined as:
\begin{equation}
\left\|E_N\right\|^2 = \C{I}_T^{(t)} {\C{I}_L^{(x)} {|E_N|^2}} = L\,\C{I}_T^{(t)} {\sum\limits_{|k| > N/2} {|\tilpsi_k(t)|^2}},
\end{equation}
cf. \cite[Proof of Theorem 4.2]{elgindy2024numerical}. Thus, the error depends on the sum of the squared magnitudes of the omitted Fourier coefficients. For analytic solutions, Fourier coefficients decay exponentially with $|k|$ as shown by the following theorem.

\begin{thm}[{\cite[Theorems 4.1 and 4.2]{elgindy2024numerical}}]\label{thm:0}
Let $t \in \FOmega_T$, and suppose that $\C{I}_x^{(x)} \psi \in {{\C{A}_{L,\beta}}}$ $\foralls \beta > 0$, is approximated by the $N/2$-degree, $L$-sp, truncated Fourier series \eqref{eq:FI1nn1}, then 
\begin{subequations}
\begin{gather}
|\tilpsi_k| = O\left({e^{ - {\omega _{\left|k\right| \beta }}}}\right),\quad \text{ as }\left|k\right| \to \infty,\label{eq:fhatkMAR312021new1}\\
{\left\| {\C{I}_x^{(x)} \psi - {}_N\C{I}_x^{(x)} \psi} \right\|} = O\left(e^{ -{\omega _{N \beta/2}}}\right),\quad \text{ as }N \to \infty.\label{eq:Thm1}
\end{gather}
Moreover, if $\C{I}_x^{(x)} \psi$ is $\beta$-analytic, then
\begin{equation}\label{eq:Thm2}
{\left\| {\C{I}_x^{(x)} \psi - {}_N\C{I}_x^{(x)} \psi} \right\|} = 0,\quad \forall N \in \MBZeP.
\end{equation}
\end{subequations}
\end{thm}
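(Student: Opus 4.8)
The three assertions all descend from the classical estimate, obtained by contour shifting, that the Fourier coefficients of an analytic periodic function decay geometrically at a rate set by the width of the analyticity strip; so the plan is to establish that single estimate and then read off the three conclusions. Fix $t \in \FOmega_T$ and write $\tilpsi_k(t) = \frac{1}{L}\left(\C{I}_x^{(x)}\psi, e^{i\omega_k x}\right)$, the $x$-Fourier coefficient over $\FOmega_L$. Using that $x \mapsto \C{I}_x^{(x)}\psi(x,t)$ extends analytically and $L$-periodically to the closed strip $\F{C}_{L,\beta}$, I would deform the integration contour from $\FOmega_L$ onto the horizontal line $\Im z = -\beta$ when $k > 0$ and onto $\Im z = \beta$ when $k < 0$: Cauchy's theorem removes the interior contribution, the two vertical edges cancel by $L$-periodicity, and the surviving integral over the shifted line carries the explicit factor $e^{-\omega_{|k|\beta}}$. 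This yields $|\tilpsi_k(t)| \le \|\C{I}_x^{(x)}\psi(\cdot,t)\|_{\C{A}_{L,\beta}}\, e^{-\omega_{|k|\beta}}$ for the $\beta > 0$ furnished by the hypothesis, which, once $\C{I}_x^{(x)}\psi \in \C{A}_{L,\beta}$ is read as uniform-in-$t$ membership over $\FOmega_T$, is exactly the decay rate \eqref{eq:fhatkMAR312021new1}.

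For \eqref{eq:Thm1}, I would start from the Parseval identity already recorded above, $\|E_N\|^2 = L\,\C{I}_T^{(t)}\sum_{|k|>N/2}|\tilpsi_k(t)|^2$, substitute the coefficient bound from the first step, and factor the bounded, continuous $t$-dependence out of the sum. The tail $\sum_{|k|>N/2}e^{-2\omega_{|k|\beta}}$ is a convergent geometric-type series whose leading order is $e^{-\omega_{N\beta}}$; integrating the finite $t$-factor over the compact interval $\FOmega_T$ then gives $\|E_N\|^2 = O(e^{-\omega_{N\beta}})$, hence $\|E_N\| = O(e^{-\omega_{N\beta/2}})$.

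For \eqref{eq:Thm2}, the added strength of $\beta$-analyticity is that the first-step estimate now holds simultaneously for \emph{every} $\beta > 0$. Fix $k$ with $|k| \ge 1$; since $\omega_{|k|\beta} = |k|\,\omega_\beta \ge \omega_\beta$, one gets $|\tilpsi_k(t)| \le \|\C{I}_x^{(x)}\psi(\cdot,t)\|_{\C{A}_{L,\beta}}/e^{\omega_\beta}$, and letting $\beta \to \infty$ along the defining limit of a $\beta$-analytic function forces $\tilpsi_k(t) = 0$. Thus $\C{I}_x^{(x)}\psi(x,t)$ collapses to its constant-in-$x$ mode $\tilpsi_0(t)$, which itself must vanish because $\C{I}_0^{(x)}\psi = 0$ by construction; hence $\C{I}_x^{(x)}\psi \equiv {}_N\C{I}_x^{(x)}\psi \equiv 0$ and the truncation error is identically zero for every $N \in \MBZeP$.

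The geometric-tail summation and the passage from a pointwise-in-$t$ bound to the $L^2(\FOmega_T)$ norm are routine. The one step that deserves real care is the contour deformation: one must verify that $\C{I}_x^{(x)}\psi(\cdot,t)$ genuinely admits an analytic, $L$-periodic extension to the closed strip $\F{C}_{L,\beta}$, so that Cauchy's theorem applies and the vertical-edge integrals cancel exactly, and that the supremum acquired on the shifted line is dominated by $\|\C{I}_x^{(x)}\psi(\cdot,t)\|_{\C{A}_{L,\beta}}$ uniformly in $t$ over $\FOmega_T$. Once that is secured, the rest is bookkeeping.
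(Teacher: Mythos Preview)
Your argument is correct and follows the standard Paley--Wiener contour-shifting route: deform the Fourier integral into the strip to extract the exponential factor $e^{-\omega_{|k|\beta}}$, sum the geometric tail via Parseval for the $L^2$ truncation bound, and in the $\beta$-analytic case let $\beta\to\infty$ against the defining growth condition to kill every nonconstant mode. One cosmetic remark: for \eqref{eq:Thm2} you need only that $\tilpsi_k=0$ for $|k|\ge1$, since the $k=0$ mode is already included in ${}_N\C{I}_x^{(x)}\psi$ for every $N\in\MBZeP$; your additional observation that $\tilpsi_0=0$ (from $\C{I}_0^{(x)}\psi=0$) is correct but not required for the stated conclusion.

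As to comparison: the present paper does not supply its own proof of this theorem. The statement is quoted verbatim from an external source \cite[Theorems~4.1 and~4.2]{elgindy2024numerical} and invoked as a black box in the error analysis of Section~\ref{sec:ESA1}. Consequently there is no in-paper argument to weigh yours against. Your contour-deformation proof is the canonical one for such results and is almost certainly what the cited reference records; in any case it stands on its own.
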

The rapid decay of Fourier coefficients for sufficiently smooth functions is a hallmark of diffusion-dominated problems, where the solution tends to smooth out over time. The FGIG method effectively exploits this property by employing a Fourier basis, enabling an accurate representation of the solution with relatively few terms. This contributes to the method's efficiency and accuracy in diffusion-dominated regimes.

For nonsmooth solutions, the coefficients decay at a polynomial rate depending on the degree of smoothness, as shown by the following theorem.

\begin{thm}[{\cite[Theorem A.1 and A.2]{Elgindy2023a}}]\label{thm:00}
Let $t \in \FOmega_T$, and suppose that $\C{I}_x^{(x)} \psi \in {{\C{H}_L^s}} \foralls s \in \MBZzerP$ is approximated by the $N/2$-degree, $L$-sp truncated Fourier series \eqref{eq:FI1nn1}, then
\begin{gather}
|\tilpsi_k| = O\left( {{{\left| {{k}} \right|}^{ - s - 1}}} \right),\quad \text{ as }\left|k\right| \to \infty,\label{eq:fhatkMAR312021new10}\\
{\left\| {\C{I}_x^{(x)} \psi - {}_N\C{I}_x^{(x)} \psi} \right\|} = O\left(N^{-s-1/2}\right),\quad \text{ as }N \to \infty.\label{eq:Thm11}
\end{gather}
\end{thm}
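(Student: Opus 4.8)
The plan is to run the classical ``integrate by parts until the smoothness runs out'' argument for the decay of Fourier coefficients, taking care to use exactly the structure encoded in the space $\C{H}_L^s$. Fix $t \in \FOmega_T$, abbreviate $f(x) = \C{I}_x^{(x)}\psi(x,t)$, and write the $k$th coefficient as $\tilpsi_k(t) = \tfrac{1}{L}\,\C{I}_L^{(x)}\!\big(f\,e^{-i\omega_k x}\big)$. The first step is to integrate by parts $s$ times in $x$ (using the Sobolev embedding $H^s(\FOmega_L)\hookrightarrow C^{s-1}(\FOmega_L)$ in one dimension to make the lower-order derivatives pointwise meaningful). Each application contributes a factor $(i\omega_k)^{-1}$ together with a boundary term proportional to $\big[f^{(j)}(x)e^{-i\omega_k x}\big]_0^L = f^{(j)}(L) - f^{(j)}(0)$, using $\omega_k L = 2\pi k$, and every one of these vanishes because $f \in \C{H}_L^s$ forces $f^{(0:s-1)}(0) = f^{(0:s-1)}(L)$. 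This leaves
\[
\tilpsi_k(t) = \frac{1}{L\,(i\omega_k)^s}\,\C{I}_L^{(x)}\!\big(f^{(s)}\,e^{-i\omega_k x}\big).
\]

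The second step extracts one further power of $|k|$ from the hypothesis $f^{(s)} \in BV$. Since $e^{-i\omega_k x}$ is smooth, the integral above equals $\C{I}_L^{(x)}\!\big(f^{(s)}\,dG\big)$ with $G(x) = (1-e^{-i\omega_k x})/(i\omega_k)$; a Riemann--Stieltjes integration by parts rewrites it as $[f^{(s)}G]_0^L - \C{I}_L^{(x)}\!\big(G\,df^{(s)}\big)$, and the endpoint term is exactly zero because $G(0) = G(L) = 0$ (so, conveniently, no periodicity of $f^{(s)}$ itself is needed). Bounding $|G| \le 2/|\omega_k|$ and estimating the Stieltjes integral by the total variation gives $\big|\C{I}_L^{(x)}(f^{(s)}e^{-i\omega_k x})\big| \le (2/|\omega_k|)\,\|f^{(s)}(\cdot,t)\|_{BV}$, so that $|\tilpsi_k(t)| \le 2\,\|f^{(s)}(\cdot,t)\|_{BV}/(L\,|\omega_k|^{s+1})$. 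Since $|\omega_k| = 2\pi|k|/L$, this is precisely the claimed rate $|\tilpsi_k| = O(|k|^{-s-1})$ of \eqref{eq:fhatkMAR312021new10}.

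The third step is the routine tail estimate for \eqref{eq:Thm11}. Starting from the Parseval-type identity $\|E_N\|^2 = L\,\C{I}_T^{(t)}\sum_{|k|>N/2}|\tilpsi_k(t)|^2$ recorded at the opening of Section \ref{sec:ESA1}, I would insert the pointwise bound from the second step and compare $\sum_{|k|>N/2}|k|^{-2s-2}$ with $\int_{N/2}^{\infty}x^{-2s-2}\,dx = O(N^{-2s-1})$; integrating the resulting constant over $\FOmega_T$ and taking square roots yields $\|E_N\| = O(N^{-s-1/2})$.

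The main obstacle is concentrated entirely in the second step: one must justify the Riemann--Stieltjes integration by parts for a merely $BV$ integrand (handling jumps of $f^{(s)}$ properly in the definition of the Stieltjes integral), and — more importantly for the third step — secure a bound on $\|f^{(s)}(\cdot,t)\|_{BV}$ that is uniform in $t \in \FOmega_T$, so that the constant carried into the time integral in the error identity is finite. Once that bookkeeping is settled, the first and third steps are standard.
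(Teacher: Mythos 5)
Your argument is correct, and it is worth noting up front that the paper does not prove Theorem \ref{thm:00} at all: the result is imported verbatim by citation from \cite[Theorems A.1 and A.2]{Elgindy2023a}, and the appendix of proofs (Section \ref{sec:MP1}) contains nothing for it. What you have written is the standard argument that such a statement must rest on, and it correctly exploits every ingredient packed into the definition of $\C{H}_L^s$: the $s$ integrations by parts consume precisely the periodicity conditions $f^{(0:s-1)}(0)=f^{(0:s-1)}(L)$ (each boundary term is a multiple of $f^{(j)}(L)-f^{(j)}(0)$ because $e^{-i\omega_k L}=1$), and the extra power of $|k|^{-1}$ comes precisely from $f^{(s)}\in BV$ via the Stieltjes step with your antiderivative $G$ vanishing at both endpoints, so no periodicity of $f^{(s)}$ itself is needed. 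The tail comparison $\sum_{|k|>N/2}|k|^{-2s-2}=O(N^{-2s-1})$ then delivers \eqref{eq:Thm11}. The one caveat you flag yourself is genuine and is the only thing separating your sketch from a complete proof: since the norm in \eqref{eq:Thm11} is the $L^2(\FOmega_{L\times T})$ norm (the error identity at the top of Section \ref{sec:ESA1} integrates over $t$), the constant in \eqref{eq:fhatkMAR312021new10} is $t$-dependent, and one must assume $\left\|f^{(s)}(\cdot,t)\right\|_{BV}\in L^2(\FOmega_T)$ (or is uniformly bounded in $t$) to pass it through the time integral; the theorem's hypothesis as stated ("let $t\in\FOmega_T$, suppose $\C{I}_x^{(x)}\psi\in\C{H}_L^s$") is pointwise in $t$ and technically silent on this, so the uniformity is an implicit assumption of the cited result rather than something you can derive. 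With that understood, your proof is sound.
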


Discontinuities or shocks features in $u$ affect the coefficients and result in the well renowned Gibbs phenomenon, where the truncated Fourier series near a discontinuity exhibits an overshoot or undershoot that does not diminish as the number of terms in the sum increases.

A crucial step in the implementation of the FGIG method lies in the solution of the linear system \eqref{eq:nnmm3Hi2_2h1}. To analyze the sources of error in solving the given linear system, we must consider errors arising from numerical stability, discretization, and condition numbers, as well as how these depend on $N, M, \mu$, and $\nu$. The right-hand-side of the linear system presents the $n$th Fourier interpolation coefficient of $u_0$, which reflects how $u_0$ is approximated by a finite number of frequencies $N$. The convergence rates of the interpolation error associated with Eq. \eqref{eq:FI1newu01} are analyzed in the following two corollaries, specifically focusing on their dependence on the smoothness of the underlying function space.

\begin{cor}[{\cite[Corollary 4.1]{elgindy2024numerical}}]\label{cor:1}
Suppose that $u_0 \in {{\C{A}_{L,\beta}}} \foralls \beta > 0$ is approximated by the $N_0/2$-degree, $L$-periodic Fourier interpolant \eqref{eq:FI1newu01}, $\foralla N_0 \in \MBZeP$, then 
\begin{subequations}
\begin{equation}\label{eq:FTS1AE1hi1}
\left\|u_0 - {I_{N_0}}u_0\right\| = O\left(e^{-\omega_{N_0 \beta/2}}\right),\quad \text{as }N_0 \to \infty.
\end{equation}
Moreover, if $u_0$ is $\beta$-analytic, then 
\begin{equation}\label{eq:Thm2hi2}
\left\|u_0 - {I_{N_0}}u_0\right\| = 0,\quad \forall N_0 \in \MBZeP.
\end{equation}
\end{subequations}
\end{cor}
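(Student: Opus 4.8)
The plan is to decompose the interpolation error into a \emph{projection (truncation) part} and an \emph{aliasing part}, and to control both through the exponential decay of the exact Fourier coefficients of $u_0$. Write $a_k := \frac{1}{L}\,\C{I}_L^{(x)}\bigl(u_0\,e^{-i\omega_k x}\bigr)$ for the exact $k$th Fourier coefficient of $u_0$ and let $\C{P}_{N_0}u_0 := \sumd\sum_{k \in \MBK_{N_0}} a_k\,e^{i\omega_k x}$ denote its $N_0/2$-degree Fourier partial sum. Since $u_0 \in \C{A}_{L,\beta}$ satisfies exactly the hypothesis imposed on $\C{I}_x^{(x)}\psi$ in Theorem \ref{thm:0}, the same contour-shift estimate (displacing the integral defining $a_k$ by $\pm\beta$ in the imaginary direction inside the strip of analyticity, which contains the closed strip of half-width $\beta$ about $\FOmega_L$ by hypothesis) yields the $u_0$-analogue of \eqref{eq:fhatkMAR312021new1}, namely $|a_k| \le \|u_0\|_{\C{A}_{L,\beta}}\,e^{-\omega_{|k|\beta}}$ for every $k$, and hence the $u_0$-analogue of \eqref{eq:Thm1}, $\|u_0 - \C{P}_{N_0}u_0\| = O\bigl(e^{-\omega_{N_0\beta/2}}\bigr)$ as $N_0\to\infty$.

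Next I would establish the aliasing identity linking the interpolation coefficients \eqref{eq:dfjvdhkb1} to the exact ones. Using the discrete orthogonality of the complex exponentials on the equally spaced node set $\MBS_{N_0}^L$ together with the absolute summability of $\{a_k\}$ (guaranteed by the decay bound above), one gets $\hu_{0,k} = \sum_{m\in\MBZ} a_{k+mN_0}$ for $k \in \MBK'_{N_0}$, with the usual endpoint convention at $k = \pm N_0/2$ already built into \eqref{eq:FI1}--\eqref{eq:FI1newu01}. Subtracting $\C{P}_{N_0}u_0$ from ${I_{N_0}}u_0$ then puts the aliasing error in closed form,
\[
{I_{N_0}}u_0 - \C{P}_{N_0}u_0 \;=\; \sumd\sum_{k \in \MBK_{N_0}} \Bigl(\,\sum_{m\neq 0} a_{k+mN_0}\Bigr) e^{i\omega_k x}.
\]

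To finish the first claim, I would bound the $L^2$-norm of this aliasing error by $\sqrt{L}$ times the $\ell^1$-norm of its Fourier coefficient sequence (Parseval together with $\|\cdot\|_{\ell^2}\le\|\cdot\|_{\ell^1}$), note that $\MBK'_{N_0}$ is a complete residue system modulo $N_0$ so that every index $j$ with $|j|\ge N_0/2$ occurs exactly once among the $k+mN_0$ with $m\neq 0$, and hence dominate the aliasing error by $\sqrt{L}\sum_{|j|\ge N_0/2}|a_j|$; inserting $|a_j| \le \|u_0\|_{\C{A}_{L,\beta}}\,e^{-\omega_{|j|\beta}}$ turns this tail into a geometric series of ratio $e^{-\omega_\beta}$, which is $O\bigl(e^{-\omega_{N_0\beta/2}}\bigr)$ uniformly in $N_0$, and $\|u_0 - {I_{N_0}}u_0\| \le \|u_0 - \C{P}_{N_0}u_0\| + \|\C{P}_{N_0}u_0 - {I_{N_0}}u_0\|$ then proves \eqref{eq:FTS1AE1hi1}. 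For the $\beta$-analytic case, the bound $|a_k| \le \|u_0\|_{\C{A}_{L,\beta}}\,e^{-\omega_{|k|\beta}}$ holds for \emph{every} $\beta>0$; fixing any $k\neq 0$ and letting $\beta\to\infty$, the product $\bigl(\|u_0\|_{\C{A}_{L,\beta}}/e^{\omega_\beta}\bigr)\,e^{-(|k|-1)\omega_\beta}$ tends to $0$ by the very definition of a $\beta$-analytic function, forcing $a_k=0$; hence $u_0$ collapses to its constant mean $a_0$, every Fourier interpolant reproduces it exactly, and \eqref{eq:Thm2hi2} follows.

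The step I expect to be the main obstacle is the aliasing identity $\hu_{0,k} = \sum_m a_{k+mN_0}$, specifically the bookkeeping at the Nyquist indices $k = \pm N_0/2$ needed to keep it consistent with the factor/primed-sum conventions of \eqref{eq:FI1} and \eqref{eq:FI1newu01}; once that is in place the remaining estimates are routine geometric-series tail bounds, and the contour-shift estimate underlying the coefficient decay is classical given the standing hypothesis $u_0 \in \C{A}_{L,\beta}$.
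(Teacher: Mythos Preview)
The paper does not supply its own proof of this corollary; it is quoted verbatim from \cite[Corollary 4.1]{elgindy2024numerical}, just as the companion truncation result (Theorem \ref{thm:0}) is cited without proof from the same source. There is therefore nothing in the present paper to compare your argument against beyond the observation that Corollary \ref{cor:1} is positioned as the interpolation analogue of Theorem \ref{thm:0}.

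That said, your proposal is correct and is the standard route: split $u_0 - I_{N_0}u_0$ into projection error plus aliasing error, invoke the contour-shift bound $|a_k|\le \|u_0\|_{\C{A}_{L,\beta}}e^{-\omega_{|k|\beta}}$ for the first, use the aliasing identity $\hu_{0,k}=\sum_m a_{k+mN_0}$ for the second, and sum the resulting geometric tails. This is exactly how one passes from a truncation estimate like \eqref{eq:Thm1} to an interpolation estimate like \eqref{eq:FTS1AE1hi1}, so your argument is almost certainly what the cited reference does as well. Your treatment of the $\beta$-analytic case---letting $\beta\to\infty$ in $|a_k|\le (\|u_0\|_{\C{A}_{L,\beta}}/e^{\omega_\beta})\,e^{-(|k|-1)\omega_\beta}$ to force $a_k=0$ for $k\ne 0$, so that $u_0$ is constant and any interpolant reproduces it---is clean and matches the spirit of \eqref{eq:Thm2}. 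The Nyquist bookkeeping you flag is a genuine nuisance but contributes only an additional term of the same exponential order and does not affect \eqref{eq:FTS1AE1hi1}.
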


\begin{cor}[{\cite[Corollary A.1]{Elgindy2023a}}]\label{cor:11}
Suppose that $u_0 \in {{\C{H}_L^s}} \foralls s \in \MBZ^+$ is approximated by the $N_0/2$-degree, $L$-periodic Fourier interpolant \eqref{eq:FI1newu01}, $\foralla N_0 \in \MBZeP$, then 
\begin{equation}\label{eq:FTS1AE1hi126Apr2021}
\left\|u_0 - {{I_{N_0}u_0}} \right\| = O\left( {{N_0^{-s-1/2}}} \right),\quad \text{as }N_0 \to \infty.
\end{equation}
\end{cor}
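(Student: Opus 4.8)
The plan is to decompose the interpolation error into a spectral-truncation part and an aliasing part, and to bound each in the $L^2(\FOmega_L)$ norm by combining Parseval's identity with the algebraic decay rate of the exact Fourier coefficients of $u_0$. Writing the Fourier series $u_0(x)=\sum_{k\in\MBZ}a_k\,e^{i\omega_k x}$ with $a_k=L^{-1}\C{I}_L^{(x)}(u_0\,e^{-i\omega_k x})$, the membership $u_0\in\C{H}_L^s$ --- that is, $u_0\in H^s(\FOmega_L)$ with $u_0^{(s)}\in BV$ and $u_0^{(0:s-1)}(0)=u_0^{(0:s-1)}(L)$ --- lets me integrate by parts $s+1$ times; the periodicity of $u_0^{(0:s-1)}$ annihilates every boundary term and the bounded variation of $u_0^{(s)}$ controls the remaining integral, yielding $|a_k|\le C\,|k|^{-s-1}$ as $|k|\to\infty$. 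This is precisely the mechanism already invoked in Theorem \ref{thm:00}, now applied to $u_0$ in place of $\C{I}_x^{(x)}\psi$.

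Next, I would relate the DFT interpolation coefficients $\hu_{0,k}$ of \eqref{eq:dfjvdhkb1} to the exact ones through the standard aliasing identity $\hu_{0,k}=\sum_{j\in\MBZ}a_{k+jN_0}$, valid for $|k|<N_0/2$; the $\pm N_0/2$ modes and the primed-summation convention in \eqref{eq:FI1newu01} touch only $O(1)$ frequencies, whose coefficients are already $O(N_0^{-s-1})$, so they contribute at strictly lower order to the squared norm and may be discarded. This gives
\[
u_0-I_{N_0}u_0=\sum_{|k|>N_0/2}a_k\,e^{i\omega_k x}\;-\;\sum_{|k|<N_0/2}\Bigl(\sum_{j\ne0}a_{k+jN_0}\Bigr)e^{i\omega_k x}=:E_{\mathrm{tr}}-E_{\mathrm{al}},
\]
and, by orthogonality of $\{e^{i\omega_k x}\}$ over $\FOmega_L$, $\|u_0-I_{N_0}u_0\|^2\le 2\|E_{\mathrm{tr}}\|^2+2\|E_{\mathrm{al}}\|^2$ with $\|E_{\mathrm{tr}}\|^2=L\sum_{|k|>N_0/2}|a_k|^2$ and $\|E_{\mathrm{al}}\|^2=L\sum_{|k|<N_0/2}\bigl|\sum_{j\ne0}a_{k+jN_0}\bigr|^2$.

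The truncation term follows by comparing the tail with an integral: $\sum_{|k|>N_0/2}|a_k|^2\le C\sum_{|k|>N_0/2}|k|^{-2s-2}=O(N_0^{-2s-1})$. For the aliasing term, the key observation is that whenever $|k|<N_0/2$ and $j\ne0$ we have $|k+jN_0|\ge(|j|-\tfrac12)N_0\ge\tfrac12|j|N_0$, so the triangle inequality gives $\sum_{j\ne0}|a_{k+jN_0}|\le C\,N_0^{-s-1}\sum_{j\ne0}|j|^{-s-1}\le C'\,N_0^{-s-1}$, the last series being finite precisely because $s\in\MBZ^+$ makes $s+1\ge2$; squaring and summing over the $O(N_0)$ indices $k$ with $|k|<N_0/2$ yields $\|E_{\mathrm{al}}\|^2=O(N_0\cdot N_0^{-2s-2})=O(N_0^{-2s-1})$. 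Adding the two bounds gives $\|u_0-I_{N_0}u_0\|^2=O(N_0^{-2s-1})$, and taking square roots produces \eqref{eq:FTS1AE1hi126Apr2021}. I expect the aliasing estimate to be the only genuine obstacle: the double sum must be handled carefully so that the triangle inequality (or Cauchy--Schwarz) on the inner sum does not lose the sharp exponent, and the hypothesis $s\in\MBZ^+$ is exactly what renders $\sum_{j\ne0}|j|^{-s-1}$ convergent --- the discrete counterpart of the role the $BV$ condition on $u_0^{(s)}$ plays in Theorem \ref{thm:00}.
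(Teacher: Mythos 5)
Your proof is correct. Note that the paper itself does not prove this corollary --- it is imported verbatim from \cite[Corollary A.1]{Elgindy2023a} --- so there is no in-paper argument to compare against; your truncation-plus-aliasing decomposition, with the coefficient decay $|a_k|=O(|k|^{-s-1})$ supplied by the $BV$ condition on $u_0^{(s)}$ and the periodicity of the lower derivatives in the definition of $\C{H}_L^s$, is the standard route to this estimate and is the one the cited reference follows. The two points you flag as delicate (the bound $|k+jN_0|\ge\tfrac{1}{2}|j|N_0$ for $|k|<N_0/2$, $j\ne0$, and the convergence of $\sum_{j\ne0}|j|^{-s-1}$ for $s\in\MBZ^+$) are handled correctly and do deliver the sharp exponent $-s-\tfrac{1}{2}$ after summing the $O(N_0)$ aliased modes.
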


Now, we turn our attention to the left-hand-side of the linear system \eqref{eq:nnmm3Hi2_2h1} whose coeficient matrix mainly consists of the SGIM that is scaled by the factor $\alpha_n$. The error of the SGG quadrature induced by the SGIM, ${}_T\F{Q}$, for sufficiently smooth functions, can be described in closed form by the following theorem.

\begin{thm}[{\cite[Theorem 4.1]{Elgindy20161}}]\label{subsec:err:thm1}
Let $f \in C^{M + 1}(\FOmega_T)$, be interpolated by the shifted Gegenbauer (SG) polynomials at the SGG nodes, $\hatt_{M,0:M}^{(\lambda)} \in \MBSG_{T,M}^{\lambda}$. Then there exist a matrix ${}_T\F{Q} = ({{}_Tq_{l,j}}),\,0 \le l,j \le M$, and some numbers $\xi_l = \xi\left(\hatt_{M,l}^{(\lambda)}\right) \in \IFOmega_T\,\forall l \in \MBJP_M$, satisfying
\begin{equation}\label{subsec:err:eq:squadki1}
\C{I}_{\hatt_{M,l}^{(\lambda )}}^{(t)} f = {}_T\F{Q}_l\,f_{0:M} + E_{T,M}^{(\lambda )}\left( {\hatt_{M,l}^{(\lambda )},{\xi _l}} \right),
\end{equation}
where $f_{0:M} = f\left(\bmt_M^{\lambda}\right)$,
\begin{equation}\label{sec1:eq:errorkimohat}
E_{T,M}^{(\lambda )}\left( {\hatt_{M,l}^{(\lambda )},{\xi _l}} \right) = \frac{{{f^{(M + 1)}}({\xi _l})}}{{(M + 1)!\,K_{T,M + 1}^{(\lambda )}}} \C{I}_{\hatt_{M,l}^{(\lambda )}}^{(t)} {G_{T,M + 1}^{(\lambda)}},
\end{equation}
\begin{equation}
K_{T,j}^{(\lambda)} = \frac{2^{2j-1}}{T^j} \frac{\Gamma(2 \lambda+1) \Gamma(j+\lambda)}{\Gamma(\lambda+1) \Gamma(j+2\lambda)}\quad \forall j \in \MBZzerP,
\end{equation}
is the leading coefficient of the $j$th-degree SG polynomial, $G_{T,j}^{(\lambda)}(t)$. 
\end{thm}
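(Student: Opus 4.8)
This is a classical interpolatory-quadrature error estimate transplanted to the shifted Gegenbauer (SG) setting, so the plan is the standard four-step route: realize ${}_T\F{Q}_l\, f_{0:M}$ as the exact integral of the SG interpolant of $f$, insert the Cauchy remainder of that interpolant, identify the nodal polynomial with a rescaled $G_{T,M+1}^{(\lambda)}$, and finally collapse the resulting weighted integral onto a single point $\xi_l$. First I would let $P_M f$ be the unique polynomial of degree at most $M$ interpolating $f$ at the SGG nodes $\hatt_{M,0:M}^{(\lambda)}$, expand it in the shifted barycentric Lagrange basis $\C{L}_{0:M}^{(\lambda)}$ of Eq.~\eqref{eq:new1}, and integrate term by term: this gives $\C{I}_{\hatt_{M,l}^{(\lambda)}}^{(t)} P_M f = {}_T\F{Q}_l\, f_{0:M}$ with entries ${}_Tq_{l,j} = \C{I}_{\hatt_{M,l}^{(\lambda)}}^{(t)} \C{L}_j^{(\lambda)}$, and in particular the $l$th row functional $f \mapsto {}_T\F{Q}_l\, f_{0:M}$ reproduces $\C{I}_{\hatt_{M,l}^{(\lambda)}}^{(t)} p$ exactly for every polynomial $p$ of degree at most $M$.

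Next, since $f \in C^{M+1}(\FOmega_T)$, the Cauchy (equivalently Hermite--Genocchi) interpolation remainder supplies, for each $t \in \FOmega_T$, a point $\zeta(t)$ in the convex hull of $\{\hatt_{M,0:M}^{(\lambda)}\} \cup \{t\}$ with $f(t) - P_M f(t) = \frac{f^{(M+1)}(\zeta(t))}{(M+1)!}\,\Pi_{M+1}(t)$, where $\Pi_{M+1}(t) = \prod_{j \in \MBJP_M}\!\left(t - \hatt_{M,j}^{(\lambda)}\right)$ is the monic nodal polynomial. Because its zeros are precisely the SGG nodes --- i.e.\ the zeros of the $(M+1)$st-degree SG polynomial --- I would substitute $\Pi_{M+1} = G_{T,M+1}^{(\lambda)}/K_{T,M+1}^{(\lambda)}$, with $K_{T,M+1}^{(\lambda)}$ the stated leading coefficient; the closed form for $K_{T,j}^{(\lambda)}$ itself follows from the classical leading coefficient of the suitably normalized Gegenbauer polynomial multiplied by $(2/T)^j$, the factor produced by the affine change of variable $t = T(x+1)/2$ that maps $\FOmega_{-1,1}$ onto $\FOmega_T$. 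Subtracting and integrating from $0$ to $\hatt_{M,l}^{(\lambda)}$ then yields
\[
\C{I}_{\hatt_{M,l}^{(\lambda)}}^{(t)} f - {}_T\F{Q}_l\, f_{0:M} = \frac{1}{(M+1)!\,K_{T,M+1}^{(\lambda)}}\,\C{I}_{\hatt_{M,l}^{(\lambda)}}^{(t)}\!\left[f^{(M+1)}(\zeta(\cdot))\,G_{T,M+1}^{(\lambda)}\right].
\]

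The last step is to replace $f^{(M+1)}(\zeta(\cdot))$ inside this integral by $f^{(M+1)}(\xi_l)$ for a single point $\xi_l$. The cleanest device is the Peano kernel theorem: writing the error functional $R_l(f) = \C{I}_{\hatt_{M,l}^{(\lambda)}}^{(t)} f - {}_T\F{Q}_l\, f_{0:M}$, the exactness from Step~1 gives the representation $R_l(f) = \C{I}_{\FOmega_T}^{(s)}\!\left(\C{K}_l\,f^{(M+1)}\right)$ with Peano kernel $\C{K}_l(s) = R_l\!\left[(t-s)_+^M\right]/M!$ (the functional acting in the $t$-variable). If $\C{K}_l$ does not change sign on $\FOmega_T$, the mean value theorem for integrals produces $\xi_l \in \IFOmega_T$ with $R_l(f) = f^{(M+1)}(\xi_l)\,\C{I}_{\FOmega_T}^{(s)} \C{K}_l = \frac{f^{(M+1)}(\xi_l)}{(M+1)!}\,R_l\!\left[t^{M+1}\right]$, and since $R_l$ annihilates every lower-order term, $R_l\!\left[t^{M+1}\right] = R_l\!\left[\Pi_{M+1}\right] = \C{I}_{\hatt_{M,l}^{(\lambda)}}^{(t)}\Pi_{M+1} = \left(K_{T,M+1}^{(\lambda)}\right)^{-1} \C{I}_{\hatt_{M,l}^{(\lambda)}}^{(t)} G_{T,M+1}^{(\lambda)}$, which is exactly the asserted form of $E_{T,M}^{(\lambda)}$. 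That $\xi_l$ may be taken strictly inside $\FOmega_T$ is immediate because the extreme SGG nodes already lie in $\IFOmega_T$.

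I expect the main obstacle to be precisely the constant-sign hypothesis required in this last step. Working directly with the integral $\C{I}_{\hatt_{M,l}^{(\lambda)}}^{(t)}\!\left[f^{(M+1)}(\zeta(\cdot))\,G_{T,M+1}^{(\lambda)}\right]$ does not help, since $G_{T,M+1}^{(\lambda)}$ vanishes --- and therefore changes sign --- at the interior SGG nodes lying in $(0,\hatt_{M,l}^{(\lambda)})$, so the weighted mean value theorem is not applicable verbatim; the Peano reformulation merely transfers the difficulty to showing that $\C{K}_l$ keeps a constant sign on $\FOmega_T$. Establishing this is where the special structure of the Gauss (Gegenbauer) node set has to be used --- e.g.\ through the sign pattern of $R_l$ on truncated powers, or by an induction on $l$ exploiting the alternating signs of $\Pi_{M+1}$ on consecutive node subintervals together with the continuity of $\zeta(\cdot)$ away from the zeros of $G_{T,M+1}^{(\lambda)}$. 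The remaining ingredients --- the Lagrange-basis bookkeeping of Step~1 and the Gamma-function manipulation giving $K_{T,j}^{(\lambda)}$ in Step~2 --- are routine.
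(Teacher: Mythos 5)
You should first note that the paper contains no proof of this theorem: it is imported verbatim from the cited reference, so there is no in-paper argument to compare against. Judged on its own terms, your skeleton is the correct one and matches the standard derivation behind such integration-matrix error formulas: the row functional $f \mapsto {}_T\F{Q}_l\,f_{0:M}$ is realized as the exact integral over $[0,\hatt_{M,l}^{(\lambda)}]$ of the degree-$M$ interpolant, with entries ${}_Tq_{l,j}=\C{I}_{\hatt_{M,l}^{(\lambda)}}^{(t)}\C{L}_j^{(\lambda)}$; the Cauchy remainder introduces the monic nodal polynomial $\Pi_{M+1}$, which equals $G_{T,M+1}^{(\lambda)}/K_{T,M+1}^{(\lambda)}$ because the SGG nodes are precisely the zeros of the $(M+1)$st-degree SG polynomial; and the closed form of $K_{T,j}^{(\lambda)}$ is the classical Gegenbauer leading coefficient multiplied by $(2/T)^{j}$ from the affine map onto $\FOmega_T$. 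Your reductions $R_l[t^{M+1}]=R_l[\Pi_{M+1}]=\C{I}_{\hatt_{M,l}^{(\lambda)}}^{(t)}\Pi_{M+1}$ are likewise correct and give the asserted normalization of the remainder.

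The genuine gap is exactly the one you flag, and it is the crux rather than a technicality. The naive weighted mean value theorem applied to $\C{I}_{\hatt_{M,l}^{(\lambda)}}^{(t)}\bigl[f^{(M+1)}(\zeta(\cdot))\,G_{T,M+1}^{(\lambda)}\bigr]$ is indeed inapplicable for every row whose integration interval $(0,\hatt_{M,l}^{(\lambda)})$ contains interior SGG nodes, since the weight changes sign there; only the row attached to the node nearest $0$ escapes this objection. Your Peano-kernel reformulation is the right repair in principle, but the required one-signedness of $\C{K}_l$ is not automatic: $R_l$ is an interpolatory rule whose nodes lie partly \emph{outside} the integration interval $[0,\hatt_{M,l}^{(\lambda)}]$, and for such subinterval rules the highest-order Peano kernel can in general change sign. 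Worse, constant sign of $\C{K}_l$ is essentially \emph{equivalent} to the validity of the single-point form of the remainder for all $f\in C^{M+1}(\FOmega_T)$, so no argument that sidesteps this property can close the proof. What you have rigorously established is therefore only the integral-form remainder $R_l(f)=\C{I}_{\FOmega_T}^{(s)}\bigl(\C{K}_l\,f^{(M+1)}\bigr)$ together with $\C{I}_{\FOmega_T}^{(s)}\C{K}_l=\bigl((M+1)!\,K_{T,M+1}^{(\lambda)}\bigr)^{-1}\C{I}_{\hatt_{M,l}^{(\lambda)}}^{(t)}G_{T,M+1}^{(\lambda)}$; the passage to $f^{(M+1)}(\xi_l)$ remains open, and completing it would require a dedicated sign analysis of $\C{K}_l$ exploiting the Gauss--Gegenbauer structure of the nodes.
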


The following theorem shows that the SG quadrature formula converges exponentially fast for sufficiently smooth functions. Its proof can be immediately derived from \cite[Theorem A.5]{Elgindy2023a} in the absence of domain partitioning.

\begin{thm}[{\cite[Theorem A.5]{Elgindy2023a}}]\label{thm:Jan212022}
Let ${\left\| {{f^{({M} + 1)}}} \right\|_{L^{\infty}(\FOmega_T)}} = A \in \MBRzerP$, where the constant $A$ is independent of $M$. Suppose also that the assumptions of Theorem \ref{subsec:err:thm1} hold true. Then there exist some constants ${D^{\left( \lambda \right)}} > 0, B_1^{\left( \lambda \right)} = {A}{D^{\left( \lambda \right)}}$, and $B_2^{\left( \lambda \right)} > 1$, which depend on $\lambda$ but are independent of $M$, such that the SG quadrature truncation error, $E_{T,{M}}^{\left( \lambda \right)}\left( {\hatt_{M,l}^{(\lambda )},\xi_l} \right)$, is bounded by
\begin{equation}
\scalebox{0.875}{$\begin{array}{l}
	\left\| {E_{{M}}^{\left( \lambda \right)}\left( {\hatt_{M,l}^{(\lambda )},\xi_l} \right)} \right\|_{L^{\infty}(\FOmega_T)} =  B_1^{\left( \lambda \right)}\,{2^{ - 2{M} - 1}}{{{e}}^{{M}}}{M}^{\lambda - {M} - \frac{3}{2}} {T^{{M} + 1}} {\hatt_{M,l}^{(\lambda )}} \times \\
	\left( {\left\{ \begin{array}{l}
	1,\quad {M} \ge 0 \wedge \lambda \ge 0,\\
	\displaystyle{\frac{{\Gamma \left( {\frac{{{M}}}{2} + 1} \right)\Gamma \left( {\lambda + \frac{1}{2}} \right)}}{{\sqrt \pi\,\Gamma \left( {\frac{{{M}}}{2} + \lambda + 1} \right)}}},\quad M \in \MBZOP \wedge  - \frac{1}{2} < \lambda < 0,\\
	\displaystyle{\frac{{2\Gamma \left( {\frac{{{M} + 3}}{2}} \right)\Gamma \left( {\lambda + \frac{1}{2}} \right)}}{{\sqrt \pi  \sqrt {\left( {{M} + 1} \right)\left( {{M} + 2\lambda + 1} \right)}\,\Gamma \left( {\frac{{{M} + 1}}{2} + \lambda} \right)}}},\quad M \in \MBZzereP \wedge  - \frac{1}{2} < \lambda < 0,\\
	B_2^{\left( \lambda \right)} {\left( {{M} + 1} \right)^{ - \lambda}},\quad {M} \to \infty  \wedge  - \frac{1}{2} < \lambda < 0
	\end{array} \right.} \right),
	\end{array}$}
\end{equation}
$\forall l \in \MBJ_{M}^+$.
\end{thm}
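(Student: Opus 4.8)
The plan is to start from the exact error representation \eqref{sec1:eq:errorkimohat} supplied by Theorem \ref{subsec:err:thm1} and bound the three factors in it. First I would use the hypothesis ${\left\| {{f^{({M} + 1)}}} \right\|_{L^{\infty}(\FOmega_T)}} = A$ to replace $\left|{f^{(M+1)}}({\xi_l})\right|$ by $A$, so that
\[
\left|E_{T,M}^{(\lambda )}\left( {\hatt_{M,l}^{(\lambda )},{\xi _l}} \right)\right| \le \frac{A}{(M+1)!\,K_{T,M+1}^{(\lambda)}}\,\left|\C{I}_{\hatt_{M,l}^{(\lambda )}}^{(t)}{G_{T,M + 1}^{(\lambda)}}\right|.
\]
Next I would estimate the definite integral of the degree-$(M+1)$ shifted Gegenbauer polynomial crudely, pulling the absolute value inside and bounding the integrand by its sup-norm, which gives $\left|\C{I}_{\hatt_{M,l}^{(\lambda )}}^{(t)}{G_{T,M + 1}^{(\lambda)}}\right| \le \hatt_{M,l}^{(\lambda)}\,\bigl\|{G_{T,M + 1}^{(\lambda)}}\bigr\|_{L^{\infty}(\FOmega_T)}$. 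This already extracts the factors $A$ and $\hatt_{M,l}^{(\lambda )}$ appearing in the claimed bound, and reduces the task to a sharp estimate of $\bigl\|{G_{T,M + 1}^{(\lambda)}}\bigr\|_{L^{\infty}(\FOmega_T)}$ together with an asymptotic simplification of $1/\bigl((M+1)!\,K_{T,M+1}^{(\lambda)}\bigr)$.

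For the sup-norm I would invoke the classical extremal theory of ultraspherical polynomials. After the affine map $t \mapsto 2t/T - 1$ carrying $\FOmega_T$ onto $\FOmega_{-1,1}$, the location and value of the maximum of the degree-$(M+1)$ Gegenbauer polynomial depend on the sign of $\lambda$: for $\lambda \ge 0$ the maximum is attained at the endpoints and is a single ratio of Gamma functions; for $-1/2 < \lambda < 0$ it migrates into the interior, its value being a parity-dependent ratio of Gamma functions — one expression when $M+1$ is even (equivalently $M \in \MBZOP$, the maximum sitting at the centre $t = T/2$) and a slightly more involved one when $M+1$ is odd (equivalently $M \in \MBZzereP$). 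These three closed forms, once the normalization linking $G_{T,M+1}^{(\lambda)}$ to the standard Gegenbauer polynomial (encoded in its leading coefficient $K_{T,M+1}^{(\lambda)}$) is carried through, are exactly the first three branches of the case bracket in the statement.

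The remainder is asymptotic bookkeeping. I would substitute the explicit formula for $K_{T,M+1}^{(\lambda)}$, so that $1/\bigl((M+1)!\,K_{T,M+1}^{(\lambda)}\bigr)$ becomes $T^{M+1}\,2^{-2M-1}$ times a ratio of Gamma functions in $M$ and $\lambda$, and then apply Stirling's formula $\Gamma(z+1)\sim\sqrt{2\pi z}\,z^z e^{-z}$ to that ratio; its dominant part collapses to $e^{M}\,M^{\lambda - M - 3/2}$ up to a $\lambda$-dependent constant, which I would absorb into $D^{(\lambda)}$ and hence into $B_1^{(\lambda)} = A\,D^{(\lambda)}$. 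This yields the prefactor $B_1^{\left( \lambda \right)}\,{2^{ - 2{M} - 1}}{{{e}}^{{M}}}{M}^{\lambda - {M} - \frac{3}{2}} {T^{{M} + 1}}\,{\hatt_{M,l}^{(\lambda )}}$ multiplying the case bracket. A further application of Stirling to the two interior-maximum branches as $M \to \infty$, in which each Gamma ratio behaves like a constant times $(M+1)^{-\lambda}$, produces the fourth branch $B_2^{\left( \lambda \right)}{\left( {{M} + 1} \right)^{ - \lambda}}$ with $B_2^{(\lambda)}>1$; super-exponential decay in $M$ is then immediate from the factor $M^{-M}e^{M}2^{-2M}$. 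Alternatively, since the present setting is precisely that of \cite[Theorem A.5]{Elgindy2023a} with the subinterval partition taken trivial, the estimate follows at once by specializing that result.

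The main obstacle is the sharp, $\lambda$-dependent sup-norm bound for the Gegenbauer polynomial when $-1/2 < \lambda < 0$: the extremum is no longer at the endpoints, and one must extract its exact closed form — different for even and odd degree — from the classical extremal estimates for ultraspherical polynomials, then propagate the parity distinction correctly through the Stirling asymptotics so that the three non-trivial branches emerge with their stated constants. Everything else — substituting $K_{T,M+1}^{(\lambda)}$ and $(M+1)!$, and the Stirling approximation of the resulting Gamma ratios — is routine though notation-heavy.
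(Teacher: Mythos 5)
Your proposal is correct and matches the paper's treatment: the paper gives no internal proof of this theorem, stating only that it ``can be immediately derived from \cite[Theorem A.5]{Elgindy2023a} in the absence of domain partitioning,'' which is exactly the specialization you offer in your closing alternative. Your more detailed sketch --- bounding $|f^{(M+1)}(\xi_l)|$ by $A$, extracting $\hatt_{M,l}^{(\lambda)}$ from the integral, invoking the parity- and sign-of-$\lambda$-dependent extremal theory of ultraspherical polynomials for the sup-norm, and Stirling asymptotics on $1/\bigl((M+1)!\,K_{T,M+1}^{(\lambda)}\bigr)$ --- is a faithful reconstruction of how the cited bound is established.
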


Besides the exponential convergence of SG quadrature, \citet{Elgindy2019b} proved earlier that the upper bounds of the rounding errors in the calculation of the elements of $q$th-order GIMs of size $(n + 1) \times (m + 1)$ required to construct the Gegenbauer quadrature rules are roughly of $O(m u_R)$, where $m, n$, and $q \in \MBZP$. Formula \eqref{eq:NewSGIM1} shows immediately that the rounding errors in ${}_TQ$ are of $O(T M u_R/2) = O(M u_R)$,
which is the same as the rounding errors in $\F{Q}$ up to a constant factor of $T/2$. The scaling by $T/2$ does not change the asymptotic order of the rounding error, though it could affect the constant factors involved. The scalar multiplication by $T/2$ does not affect the condition number either because it scales both the norm of $\F{Q}$ and and the norm of $\F{Q}^{-1}$ by a factor and its reciprocal:
\begin{equation}
\Kappa({}_T\F{Q}) = \left|\frac{T}{2}\right| \left\|\F{Q}\right\| \cdot \left|\frac{2}{T}\right| \left\|\F{Q}^{-1}\right\| = \Kappa(\F{Q}).
\end{equation}
Thus, the condition numbers of $\F{Q}$ and ${}_T\F{Q}$ are identical.

Now, let's analyze the error due to rounding in the collocation matrix ${}_T\F{A}^{(n,M)}$ in \eqref{eq:nnmm3Hi2_2h1}. Assume that ${}_T\F{Q}$ is perturbed by $\Delta {}_T\F{Q}$ due to rounding errors. Furthermore, we assume that $\alpha_n$ also has rounding errors, which we will denote by $\Delta \alpha_n: \left\|\Delta \alpha_n\right\| = O(u_R)$. Then the rounding errors in ${}_T\F{A}^{(n,M)}$ can be estimated by $\Delta \alpha_n {}_T\F{Q} + \alpha_n \Delta {}_T\F{Q} + \Delta \alpha_n \Delta {}_T\F{Q}$.
These error terms contribute asymptotic rounding errors in each matrix element of orders $O(u_R), O(M \alpha_n u_R)$, and $O(M u_R^2)$, respectively. We can thus express the total asymptotic error in each matrix element as $O\left(\max\left\{u_R, M \alpha_n u_R, M u_R^2\right\}\right)$. Consequently, the asymptotic error in each element of the product ${}_T\F{A}^{(n,M)} {\tilpsi}_{n,0:M}$ of the linear system \eqref{eq:nnmm3Hi2_2h1} can be directly estimated by 
\[O\left(\max\left\{u_R, M \alpha_n u_R, M u_R^2\right\} \left\|{\tilpsi}_{n,0:M}\right\|_1\right),\quad \forall n \in \MBN_{N/2}.\]
Notice how the size of Fourier coefficients is critical in determining the error. The rate at which these coefficients decay as $|n|$ increases is influenced by the smoothness of the solution function $u$, as shown earlier in this section. In particular, for smooth functions, which is often the case with a relatively low P\'{e}clet number $Pe = \mu L/\nu$, Fourier coefficients decay exponentially as $|n|$ increases, and their $l_1$-norm will be relatively small, leading to a smaller overall error, even as $M$ or $N$ increases. For nonsmooth solutions, however, especially those with discontinuities or sharp gradients, which often occur at relatively high $Pe$ values, Fourier coefficients decay much more slowly. This slower decay means that the Fourier coefficients for higher frequencies remain relatively large, and their contribution to the $l_1$-norm will be large. As a result, the error in the product can increase significantly. This shows that nonsmoothness often leads to slower convergence and higher numerical errors as expected.

The overall error in the solution will depend not only on the error in the matrix-vector product, ${}_T\F{A}^{(n,M)} {\tilpsi}_{n,0:M}$, or the relatively small errors in the discrete Fourier coefficients of $u_0$, but also on the conditioning of the collocation matrix ${}_T\F{A}^{(n,M)}$, which is given by
\begin{equation}
\text{\Kappa}\left({}_T\F{A}^{(n,M)}\right) = \left\|\F{I}_{M+1} + \alpha_n\,{}_T\F{Q}\right\| \left\|\left(\F{I}_{M+1} + \alpha_n\,{}_T\F{Q}\right)^{-1}\right\|. 
\end{equation}
Therefore, the behavior of the condition number depends on the relative size of $\alpha_n$ and the spectrum of $\alpha_n\,{}_T\F{Q}$. If $\lambdabar_{0:M}$ are the eigenvalues of ${}_T\F{Q}$, then the eigenvalues of ${}_T\F{A}^{(n,M)}, \hlambdabar_{0:M}$, are shifted-scaled versions of $\lambdabar_{0:M}$ by the scaling factor $\alpha_n: \hlambdabar_{0:M} = 1+\alpha_n \lambdabar_{0:M}$. To partially analyze the effect of this shift-scaling operation on the conditioning of the collocation matrix, let $\lambdabar_{\min}$ and $\hlambdabar_{\min}$ denote the smallest eigenvalues of ${}_T\F{Q}$ and ${}_T\F{A}^{(n,M)}$, respectively. Let also $\sigma_{\max}, \sigma_{\min}, \hsigma_{\max}$, and $\hsigma_{\min}$ denote the extreme singular values of ${}_T\F{Q}$ and ${}_T\F{A}^{(n,M)}$ in respective order. The first rows of Figures \ref{fig:0_1}-\ref{fig:0_3} display their distributions together with the distributions of $\lambdabar_{0:M}$ and $\hlambdabar_{0:M}$ for increasing values of $M$ and $\lambda$. Observe how $\lambdabar_{0:M}$ cluster gradually around $0$ as the size of the matrix increases, for all values of $\lambda$. Moreover, increasing $\lambda$ values, while holding $M$ fixed, tends to gradually spread out the eigenvalues in the complex plane away from the origin. This shows that $\lambdabar_{\min} \to 0$, as $\lambda \to -0.5$. While a near-zero eigenvalue does not directly determine the exact value of the smallest singular value, it strongly confirms the existence of a relatively small $\sigma_{\min}$, leading to a large condition number and potential numerical instability. In fact, Theorem \ref{thm:Woowmmm1} proves the existence of at least one near zero singular value of $\F{Q}$, not necessarily the smallest singular value, if $\lambda \to -0.5$. Figure \ref{fig:0_5} confirms this fact, where we can clearly see the rapid decay of the smallest singular values of $\F{Q}$ as $\lambda \to -0.5$. This analysis shows that \Kappa$(\F{Q}) = $\Kappa$\left({}_T\F{Q}\right) \to \infty$, as $\lambda \to -0.5$, with a faster growth rate as $M$ increases.  The first row of Figure \ref{fig:0_4} further supports this observation, showing a significant shift in the order of magnitude of \Kappa$\left({}_T\F{Q}\right)$ for all $M$ values as $\lambda$ gradually approaches $-0.5$. The figure also reveals that the curve of \Kappa$\left({}_T\F{Q}\right)$ initially exhibits a near $L$-shaped pattern for small $M$ values. It rapidly increases as $\lambda$ decreases below $0$ but grows slowly as $\lambda$ increases beyond $1$. This latter growth rate increases gradually for larger $M$, and the curve of \Kappa$\left({}_T\F{Q}\right)$ gradually transitions into a $U$-shaped pattern with a base in the range $0 \le \lambda \le 1$. It is striking here to observe  that the poor conditioning for $\lambda \to -0.5$ can be largely restored with a proper scaling of ${}_T\F{Q}$ followed by a shift by the identity matrix. This operation effectively shifts $\lambdabar_{\min}$ from near $0$ to near $1$, for relatively small scaling factors. In particular, assuming a relatively small $\alpha_n$, which occurs when $\mu, \nu$, and $n$ are small, the scaled eigenvalue $\alpha_n \lambdabar_j$ remains relatively small $\forall j \in \MBJP_M$, especially for large $M$, forcing the eigenvalues of ${}_T\F{A}^{(n,M)}$ to cluster around $1$. This can significantly decrease the ratio $\hsigma_{\max}/\hsigma_{\min}$. Furthermore, if $\alpha_n$ is too small, then ${}_T\F{A}^{(n,M)} \approx \F{I}_{M+1}$. Hence, \Kappa$\left({}_T\F{A}^{(n,M)}\right) \approx 1$, and the matrix is a near perfectly well-conditioned matrix. For the sake of illustration, consider the dataset $\{L = 2, T = 0.2, \mu = 0, \nu = 1, N = 4\}$. At the fundamental frequency, we can readily verify that $\hlambdabar_{0:M} = \bmone_{M+1}+\pi^2 \lambdabar_{0:M} \approx 1$, for sufficiently large $M$, since $\lambdabar_{0:M}$ cluster around the origin as evident by Figures \ref{fig:0_2} and \ref{fig:0_3}. Notice however that this result remains valid for small $M$ values only if $|\lambda|$ remains relatively small. The second rows of Figures \ref{fig:0_1}-\ref{fig:0_3} manifest the distributions of $\hlambdabar_{0:M}$, where we observe their clustering around unity, for increasing $M$ values. The second row of Figure \ref{fig:0_4} displays the astonishing decay of the condition number for all $\lambda$ and $M$ values, dropping by four orders of magnitude in the range $40 \le M \le 80$. Notice here that the conditioning relatively deteriorates as $\lambda$ continues to grow beyond nearly 1.5, and the degeneration becomes relatively clear for large $M$ values.

The above analysis assumes a small $\alpha_n$ for all cases. When $\alpha_n$ becomes relatively large, \Kappa$\left({}_T\F{A}^{(n,M)}\right)$ will depend on the interplay between the parameters $n, \mu$, and $\nu$. Assuming $\mu$ and $\nu$ are held fixed, the largest condition number in this case occurs at the largest value of $\alpha_n$, which occurs at $n = N/2$; see Theorem \ref{thm:poyopi1}. At this stage, increasing either $\mu$ or $\nu$ makes the eigenvalues more spread out in the complex plane, which will gradually increase \Kappa$\left({}_T\F{A}^{(n,M)}\right)$ until a terminal value where it ceases to increase anymore, assuming $N, \lambda < \infty$. This is because ${}_T\F{A}^{(n,M)} \sim \alpha_n {}_T\F{Q}\,\foralll \alpha_n$, thus, \Kappa$\left({}_T\F{A}^{(n,M)}\right) \to\,$\Kappa${}_T\F{Q}$. This demonstrates that for highly oscillatory solutions, where numerous Fourier terms are necessary to accurately represent the high-frequency oscillations, the condition number of the resulting linear system will be, at worst, comparable to that of the integration matrix itself. In overall, for larger $n$, the effect of both $\mu$ and $\nu$ becomes more pronounced, and the balance between these two constants is crucial in the sense that reducing their values can improve the conditioning of the linear system. Figures \ref{fig:0_6}-\ref{fig:0_9} demonstrate the effect of $\mu$ and $\nu$ on the conditioning as their values grow large.


\begin{figure}[t]
\centering
\includegraphics[scale=0.65]{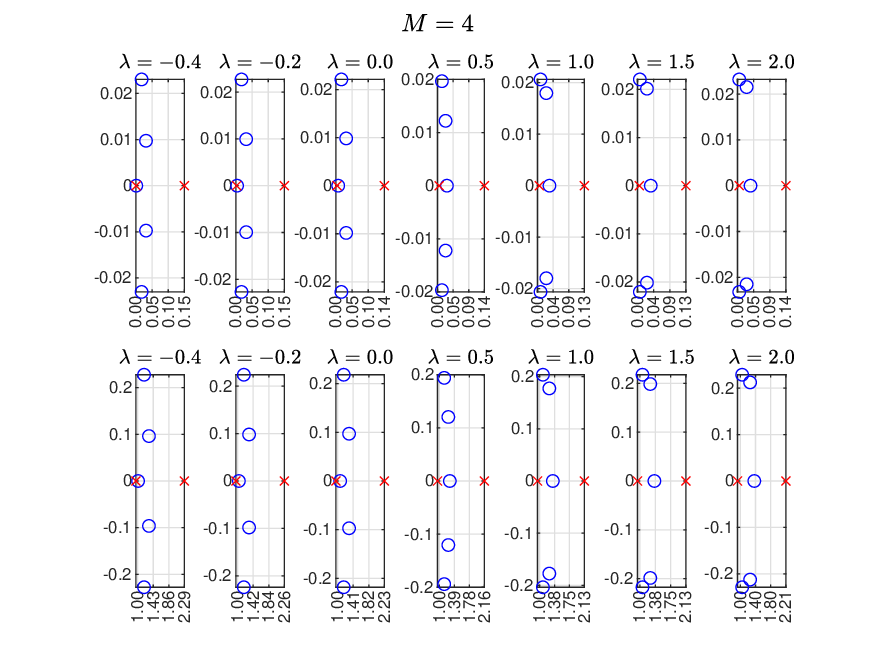}
\caption{The distribution of the eigenvalues (blue circles) and the extreme singular values (red x marks) of ${}_T\F{Q}$ (first row) and ${}_T\F{A}^{(1,M)}$ (second row) for $M = 4$ and $\lambda = -0.4, -0.2, 0:0.5:2$. All plots were generated using $L = 2$, $T = 0.2, \mu = 0, \nu = 1$, and $N = 4$.}
\label{fig:0_1}
\end{figure}

\begin{figure}[t]
\centering
\includegraphics[scale=0.65]{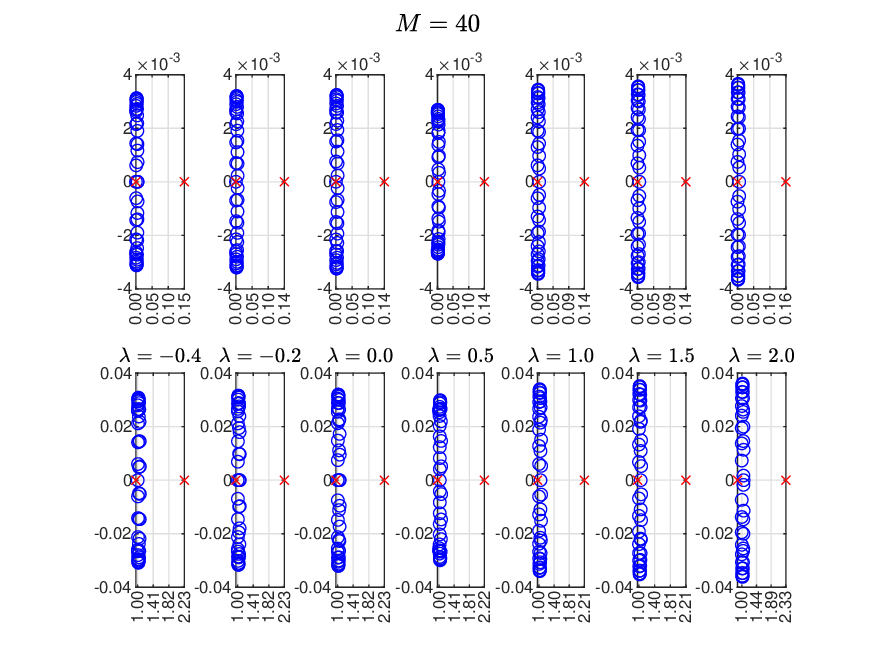}
\caption{The distribution of the eigenvalues (blue circles) and the extreme singular values (red x marks) of ${}_T\F{Q}$ (first row) and ${}_T\F{A}^{(1,M)}$ (second row) for $M = 40$ and $\lambda = -0.4, -0.2, 0:0.5:2$. All plots were generated using $L = 2$, $T = 0.2, \mu = 0, \nu = 1$, and $N = 4$.}
\label{fig:0_2}
\end{figure}

\begin{figure}[t]
\centering
\includegraphics[scale=0.65]{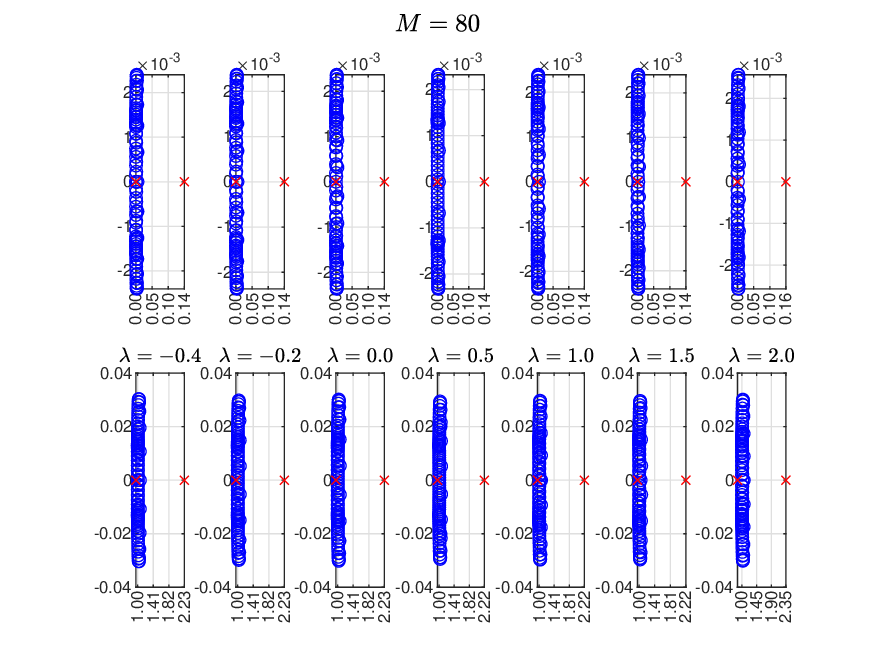}
\caption{The distribution of the eigenvalues (blue circles) and the extreme singular values (red x marks) of ${}_T\F{Q}$ (first row) and ${}_T\F{A}^{(1,M)}$ (second row) for $M = 80$ and $\lambda = -0.4, -0.2, 0:0.5:2$. All plots were generated using $L = 2$, $T = 0.2, \mu = 0, \nu = 1$, and $N = 4$.}
\label{fig:0_3}
\end{figure}

\begin{figure}[t]
\centering
\includegraphics[scale=0.5]{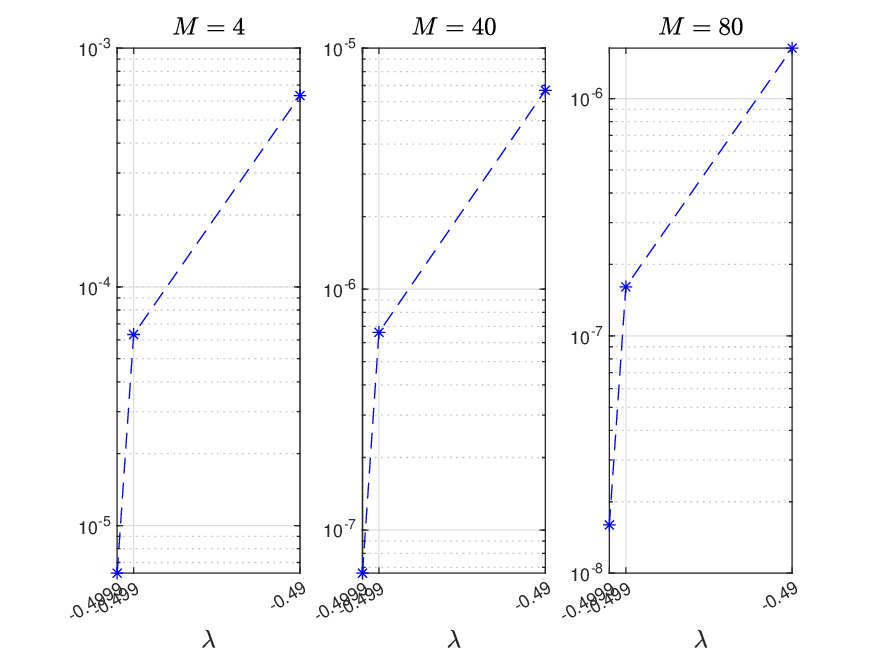}
\caption{The values of the smallest singular values of $\F{Q}$ for $\lambda = -0.49,-0.499,-0.4999$ and $M = 4, 40$, and $80$.}
\label{fig:0_5}
\end{figure}

\begin{figure}[t]
\centering
\includegraphics[scale=0.55]{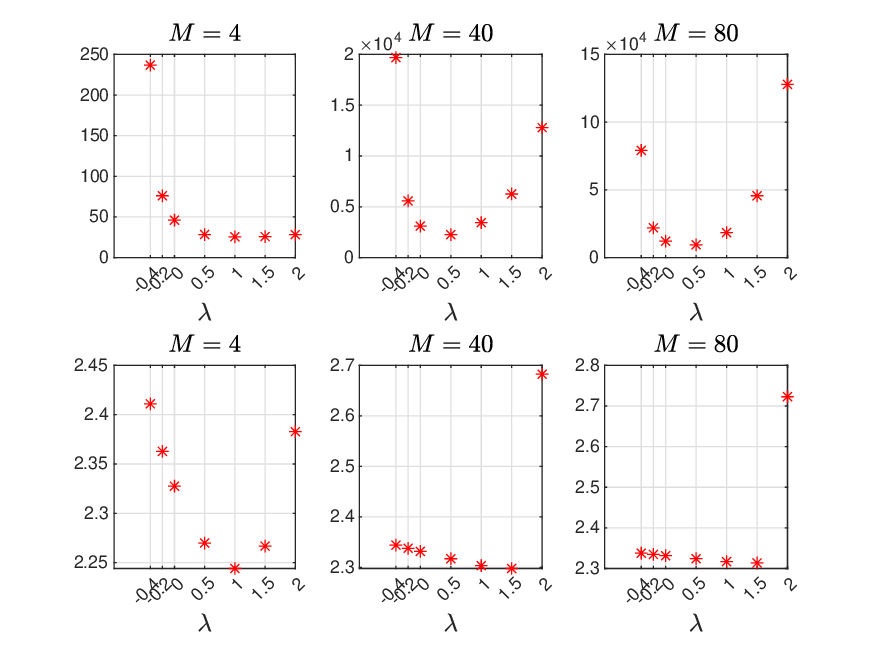}
\caption{The values of \Kappa$({}_T\F{Q})$ (first row) and \Kappa$\left({}_T\F{A}^{(1,M)}\right)$ (second row) for $M = 4, 40, 80$ and $\lambda = -0.4, -0.2, 0:0.5:2$. All plots were generated using $L = 2$, $T = 0.2, \mu = 0, \nu = 1$, and $N = 4$.}
\label{fig:0_4}
\end{figure}

\begin{figure}[t]
\centering
\includegraphics[scale=0.55]{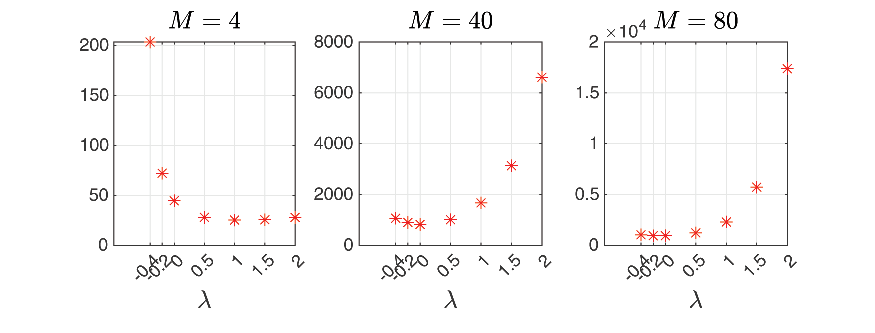}
\caption{The values of \Kappa$\left({}_T\F{A}^{(N/2,M)}\right)$ for $M = 4, 40, 80$ and $\lambda = -0.4, -0.2, 0:0.5:2$. All plots were generated using $L = 2, T = 0.2, \mu = 1, \nu = 1$, and $N = 50$.}
\label{fig:0_6}
\end{figure}

\begin{figure}[t]
\centering
\includegraphics[scale=0.55]{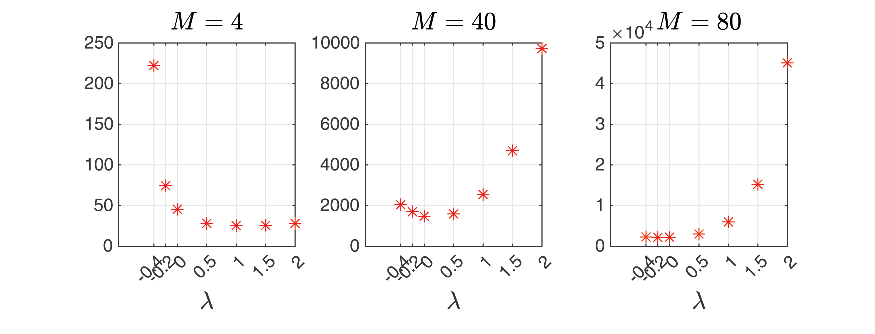}
\caption{The values of \Kappa$\left({}_T\F{A}^{(N/2,M)}\right)$ for $M = 4, 40, 80$ and $\lambda = -0.4, -0.2, 0:0.5:2$. All plots were generated using $L = 2, T = 0.2, \mu = 100, \nu = 1$, and $N = 50$.}
\label{fig:0_7}
\end{figure}

\begin{figure}[t]
\centering
\includegraphics[scale=0.55]{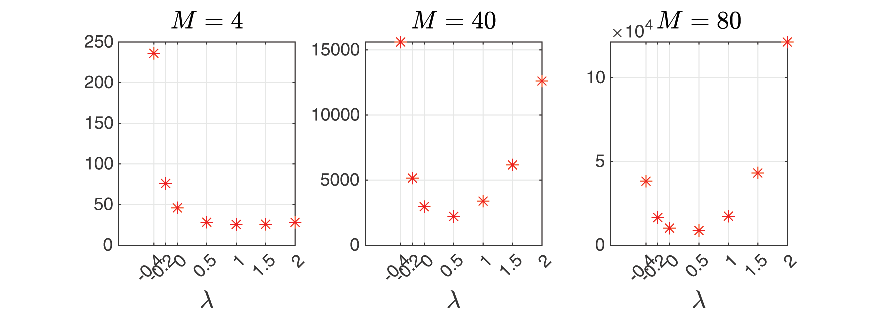}
\caption{The values of \Kappa$\left({}_T\F{A}^{(N/2,M)}\right)$ for $M = 4, 40, 80$ and $\lambda = -0.4, -0.2, 0:0.5:2$. All plots were generated using $L = 2, T = 0.2, \mu = 1, \nu = 50$, and $N = 50$.}
\label{fig:0_8}
\end{figure}

\begin{figure}[t]
\centering
\includegraphics[scale=0.5]{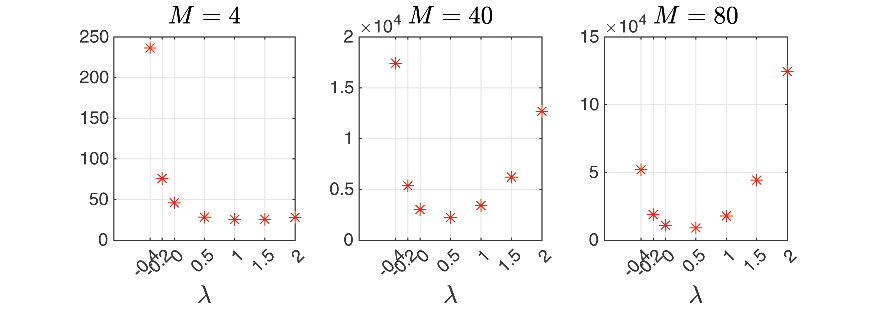}
\caption{The values of \Kappa$\left({}_T\F{A}^{(N/2,M)}\right)$ for $M = 4, 40, 80$ and $\lambda = -0.4, -0.2, 0:0.5:2$. All plots were generated using $L = 2, T = 0.2, \mu = 100, \nu = 100$, and $N = 50$.}
\label{fig:0_9}
\end{figure}

The above analysis emphasizes that the proper choice of the Gegenbauer parameter $\lambda$ can significantly influence the stability and conditioning of the SGIM, which is crucial for the stability and convergence of spectral integration methods employing Gegenbauer polynomials, in general. Besides our new findings here on the strong relationship between $\lambda$ and the conditioning of the matrix, it is important to mention also that the barycentric representation of the SGIM employed in this work generally improves the stability of Gegenbauer quadratures, since it mitigates numerical instabilities that can arise from the direct evaluation of Gegenbauer polynomials, especially for large degrees or specific $\lambda$ values \cite{Elgindy20171}. Since the stability characteristics of the SGIM are directly inherited from the GIM, a suitable rule of thumb that can be drawn from this study and verified by Figure \ref{fig:0_4} is to select $\lambda$ within the interval $(-1/2+\varepsilon, 2]$ for a relatively small $p = \max\{N,\mu,\nu\}$, where $\varepsilon$ is a relatively small and positive parameter. For $\foralll p$, Figures \ref{fig:0_6}-\ref{fig:0_9} suggest to shrink this recommended interval into $[0, 0.5]$ to a maintain a relatively low condition number. 

Besides the conditioning matter, we draw the attention of the reader to the important fact that the Gegenbauer weight function associated with the GIM diminishes rapidly near the boundaries $x = \pm 1$ for increasing $\lambda > 2$, which forces the Gegenbauer quadrature to depend more heavily on the behavior of the integrand near the central part of the interval, increasing sensitivity to errors, and forcing the quadrature to become more extrapolatory. The parameter $\varepsilon$ is a small buffer added to $-1/2$ to maintain the numerical stability of the Gegenbauer quadrature, since the Gegenbauer polynomials of increasing orders grow rapidly as $\lambda \to -1/2$. Moreover, for sufficiently smooth functions and large spectral expansion terms, the truncated expansion in the shifted Chebyshev quadrature, associated with $\lambda = 0$, is optimal for the $L^{\infty}$-norm approximations of definite integrals, while the shifted Legendre quadrature, associated with $\lambda = 0.5$, is optimal for the $L^2$-norm approximations \cite{Elgindy201382}.

\section{The Semi-Analytical FGIG (SA-FGIG) Method}
\label{sec:FGIG12}
We show in this section how to compute the time-dependent Fourier coefficients analytically, and use them to synthesis the solution $u$. To this end, notice that Eq. \eqref{eq:nnmm3} is a Volterra integral equation of the first kind for $\tilpsi_n\,\foralle n$. To obtain its solutions in closed form, we differentiate its both sides w.r.t. $t$ to get
\begin{equation}\label{eq:nnmm3mn1}
\frac{d\tilpsi_n(t)}{dt} + \alpha_n \tilpsi_n(t) = 0,\quad \forall n \in \MBN_{N/2}.
\end{equation}
This is a first-order linear ODE for $\tilpsi_n$ with the initial condition $\tilpsi_n(0) = \hu_{0,n}\,\forall n \in \MBN_{N/2}$, which can be derived easily from Eq. \eqref{eq:nnmm3}. Its solution is therefore given by
\begin{equation}\label{eq:SGV121}
\tilpsi_n(t) = \hu_{0,n} e^{-\alpha_n t},\quad \forall n \in \MBN_{N/2}.
\end{equation}
Notice here that $\Re(\alpha_n) = \nu \omega_n^2$ controls the super-exponential decay of $\tilpsi_n$ due to the quadratic dependence on $n$ in the exponent, for $\nu > 0$. Consequently, for $\nu > 0$, higher-frequency modes, corresponding to large $|n|$, decay much faster. This is characteristic of diffusion, which dissipates high-frequency components rapidly, leading to the smoothing of the solution $u$. On the other hand, the oscillatory behavior of $\tilpsi_n$ arises from two sources: (i) the discrete Fourier coefficient $\hu_{0,n}$, which is generally complex, and introduces an initial phase that defines the starting point of the oscillation for each mode, and (ii) $\Im(\alpha_n) = \mu \omega_n$, which introduces a time-dependent phase shift, $e^{-i \mu \omega_n t}$, that causes the oscillations to evolve over time. The frequency of these oscillations increases linearly with $|n|$. This reflects advection, which causes phase shifts in the modes without affecting their amplitude. The combined effects of $\nu$ and $\mu$, encapsulated by the parameter $\alpha_n$, cause the modes to experience amplitude decay that is dominated by $\nu$ and a phase shift that is determined by $\mu$. 

Using Eq. \eqref{eq:Smart1HH1}, the conjugate symmetry condition \eqref{eq:Remark1}, and the closed form expression of Fourier coefficients \eqref{eq:SGV121}, we can now write the Fourier series of the semi-analytic (SA) solution, $u^{sa}$, as follows:
\begin{align}
{}_Nu^{sa}(x,t) &=  2 \Re\left(\sum\limits_{k \in \MBN_{N/2}} {{\tilpsi_k(t)}\; {e^{i{\omega _k}x}}}\right) - 2 \sum\limits_{k \in \MBN_{N/2}} {\Re\left(\tilpsi_k(t)\right)} + g(t)\\
&= 2 \Re\left(\sum\limits_{k \in \MBN_{N/2}} {\hu_{0,k} e^{-\alpha_k t + i{\omega _k}x}}\right) - 2 \sum\limits_{k \in \MBN_{N/2}} {\Re\left(\hu_{0,k} e^{-\alpha_k t}\right)} + g(t).\label{eq:FI1nn1kl1} 
\end{align}
The SA-SSD, ${}_Nu_x^{sa}$, is therefore given by
\begin{align}\label{eq:FI1nn1kl13}
{}_Nu_x^{sa}(x,t) &= -2 \Im\left(\sum\limits_{k \in \MBN_{N/2}} {\omega_k \hu_{0,k} e^{-\alpha_k t + i{\omega _k}x}}\right). 
\end{align}

\section{Computational Results}
\label{sec:CRAC1}
This section presents a series of numerical experiments to validate the accuracy and efficiency of the proposed FGIG and SA-FGIG methods. We consider three benchmark problems with known analytical solutions, allowing for a direct comparison between the numerical and exact solutions. Each method's performance is evaluated using error norms and computational time. We measure the error in the approximate solution of Problem S at the augmented collocation points set $\MBS_N^L \times \MBSG_{T,M}^{\lambda,+}$ using the absolute error function given by
\begin{equation}\label{eq:AEF1}
E_u(x,t) = |u(x,t)-{}_Nu(x,t)|,
\end{equation}
and the discrete norm at $t = T$ given by
\begin{equation}\label{eq:EDN1}
\text{DNE}_u = \left\|u(x,T)-{}_Nu(x,T)\right\|_N = \scalebox{0.9}{$\sqrt{\frac{L}{N} \sum\limits_{j \in \MBJ_N} {\left(u(x_{N,j},T)-{}_Nu(x_{N,j},T)\right)^2}}$}.
\end{equation}
The superscript ``sa'' is added to the above two error notations when the solution is approximated by the SA-solution obtained through \eqref{eq:FI1nn1kl1}.

\textbf{Test Problem 1.} Consider Problem S with $\mu = 0, \nu = 1$, and $L = 2$ with initial and boundary functions $u_0(x) = \sin(\pi x)$ and $g(t) = 0$. The exact solution of this problem is $u(x,t) = e^{-\pi^2 t} \sin(\pi x)$ \cite{jena2023one}. Figure \ref{fig:1_1} shows the surfaces of the exact solution and its approximations obtained by the FGIG and the SA-FGIG methods as well as their corresponding errors for some parameter values. Both methods achieve near machine precision in resolving the solution and the SSD on a relatively coarse mesh grid of size $22 \times 13$. Figure \ref{fig:2_1} shows the surfaces of the logarithms of the discrete error norms for both methods at the terminal time, $t = 0.2$, for certain ranges of the parameters $N, M$, and $\lambda$. When $N$ is fixed, the linear decay of the surface for increasing $M$ values indicates the exponential convergence of the FGIM. The nearly flat surface along $N$ while holding $M$ fixed in the plot associated with the FGIM occurs because the error is dominated by the discretization error of System \eqref{eq:nnmm3Hi1}, controlled by the size of the temporal linear system. To observe the theoretical exponential decay with $N$, it is necessary to increase the time mesh points grid so that the temporal discretization error becomes negligible compared to the spatial error. This interplay between $N$ and $M$ reflects the dependence of the total error on both spatial and temporal resolution. The SA-FGIM method, on the other hand, exhibits much smaller logarithmic error norms in the range $-16.1$ to $-16.8$, indicating a relatively flat surface. This is expected because the exact solution is $\beta$-analytic in $x\,\forall L < 2$; cf. Theorem \ref{thm:MP1}. Consequently, Fourier truncation and interpolation errors vanish $\forall N \in \MBZeP$ by Theorem \ref{thm:0} and Corollary \ref{cor:1}. Furthermore, the numerical errors in double-precision arithmetic quickly plateau at about $O(10^{-16})$ level and exhibit random fluctuations. Figure \ref{fig:3_1} shows the times (in seconds) required to run the FGIM and the SA-FGIM methods for certain ranges of the parameters $N$ and $M$. The surfaces associated with the FGIM have relatively much lower time values compared to their counterparts associated with the SA-FGIM. In all runs, the FGIM finished in under 0.01 seconds. In contrast, the SA-FGIM method exhibited substantially longer execution times, peaking at approximately 0.17 seconds when $M = N = 100$. At this point, the FGIM demonstrated a remarkable speed advantage, operating roughly 21 times faster than the SA-FGIM! This indicates that the FGIM is much more efficient in terms of execution time. As the number of cores in the computing device increases, the time gap between both methods is expected to widen significantly, especially when processing larger datasets. Table \ref{tab:1} shows a comprehensive comparisons with the methods of \citet{jena2021computational}, \citet{jena2023one}, and \citet{demir2012numerical}; the latter work uses a collocation method with cubic B-spline finite elements. Compared with all three methods, the FGIM achieves significantly smaller errors, even when employing relatively coarse mesh grids. For instance, the method in \cite{jena2023one} necessitates a mesh grid comprising $3,210,321$ points to achieve an $L_{\infty}$-error on the order of $10^{-12}$. In contrast, the FGIM attains the same level of accuracy with as few as 36 points and achieves near-full precision using just 44 mesh points-- a reduction in the mesh grid size by approximately 99.999\% in either case! The SA-FGIM outperforms all methods in terms of accuracy, scoring full machine precision using just 5 truncated Fourier series terms. Figure \ref{fig:4_1} displays the values of the collocation matrix condition number for certain ranges of the parameters $N$ and $M$. Observe how the \Kappa$\left({}_T\F{A}^{(n,M)}\right)$ remains relatively close from 1 at $n = 1$, and slightly improves for increasing matrix size, as discussed earlier in Section \ref{sec:ESA1}. 

\begin{figure*}[t]
\centering
\includegraphics[scale=0.35]{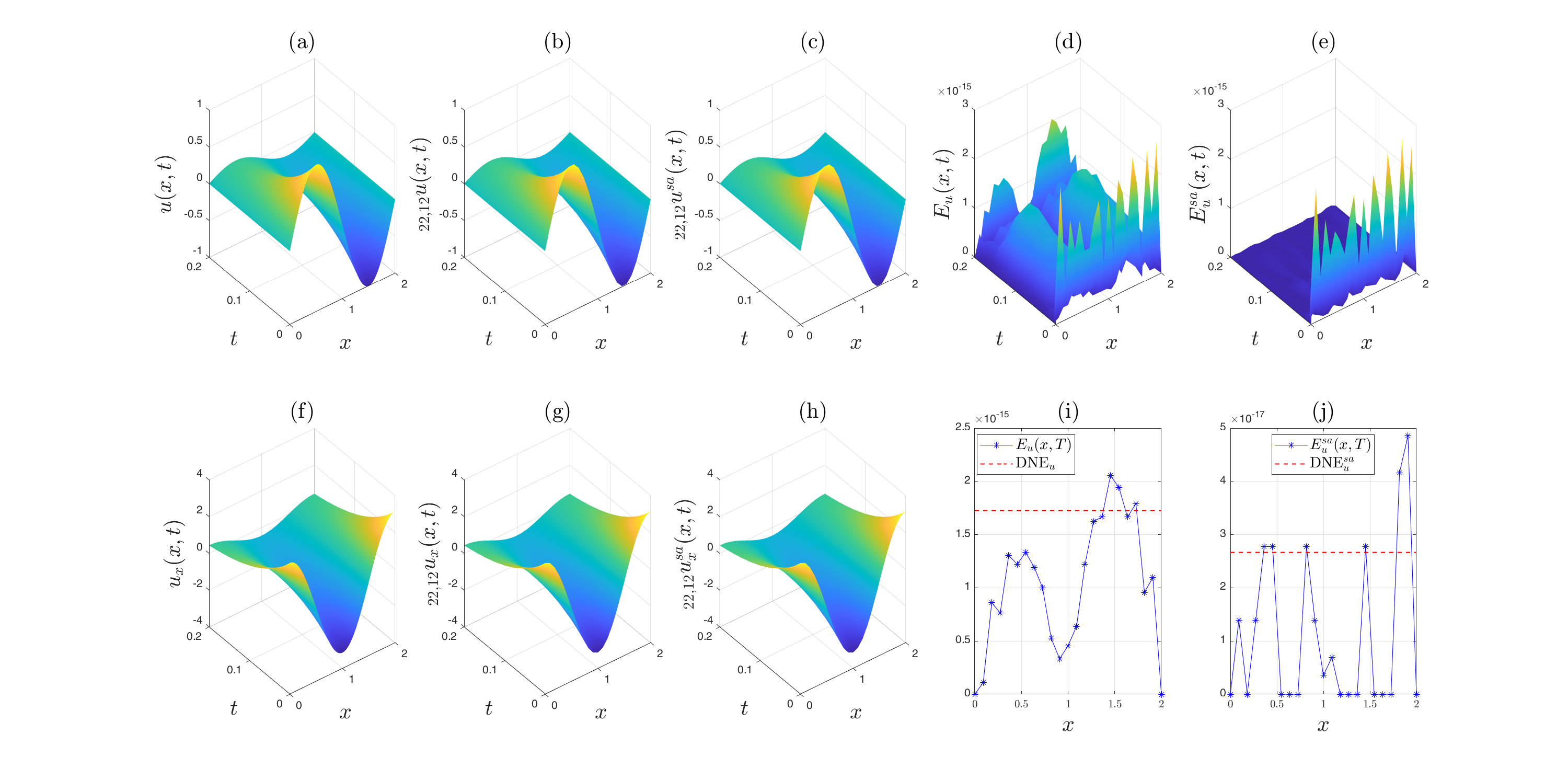}
\caption{Figs. (a)-(h) illustrate the surfaces of the exact and approximate solutions, their spatial derivatives and approximations, and their associated errors for Test Problem 1 using $M = 12, N = 22, N_0 = 24$, and $\lambda = -0.4$. Figs. (i) and (j) depict the absolute error profiles and the discrete norm level at the terminal time, $t = 0.2$.}
\label{fig:1_1}
\end{figure*}

\begin{figure}[t]
\centering
\includegraphics[scale=0.6]{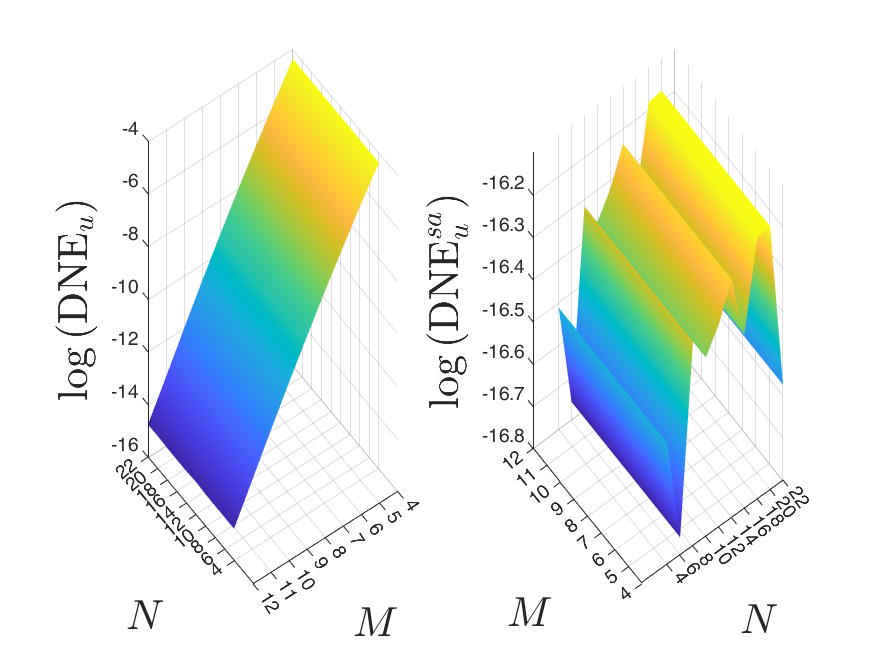}
\caption{The surfaces of the logarithms of the discrete error norms for the FGIM and SA-FGIM methods at the terminal time, $t = 0.2$. The computations were performed for $N = 4:2:22, N_0 = N + 2, M = 4:12$, and $\lambda = -0.4$.}
\label{fig:2_1}
\end{figure}

\begin{figure}[t]
\centering
\includegraphics[scale=0.6]{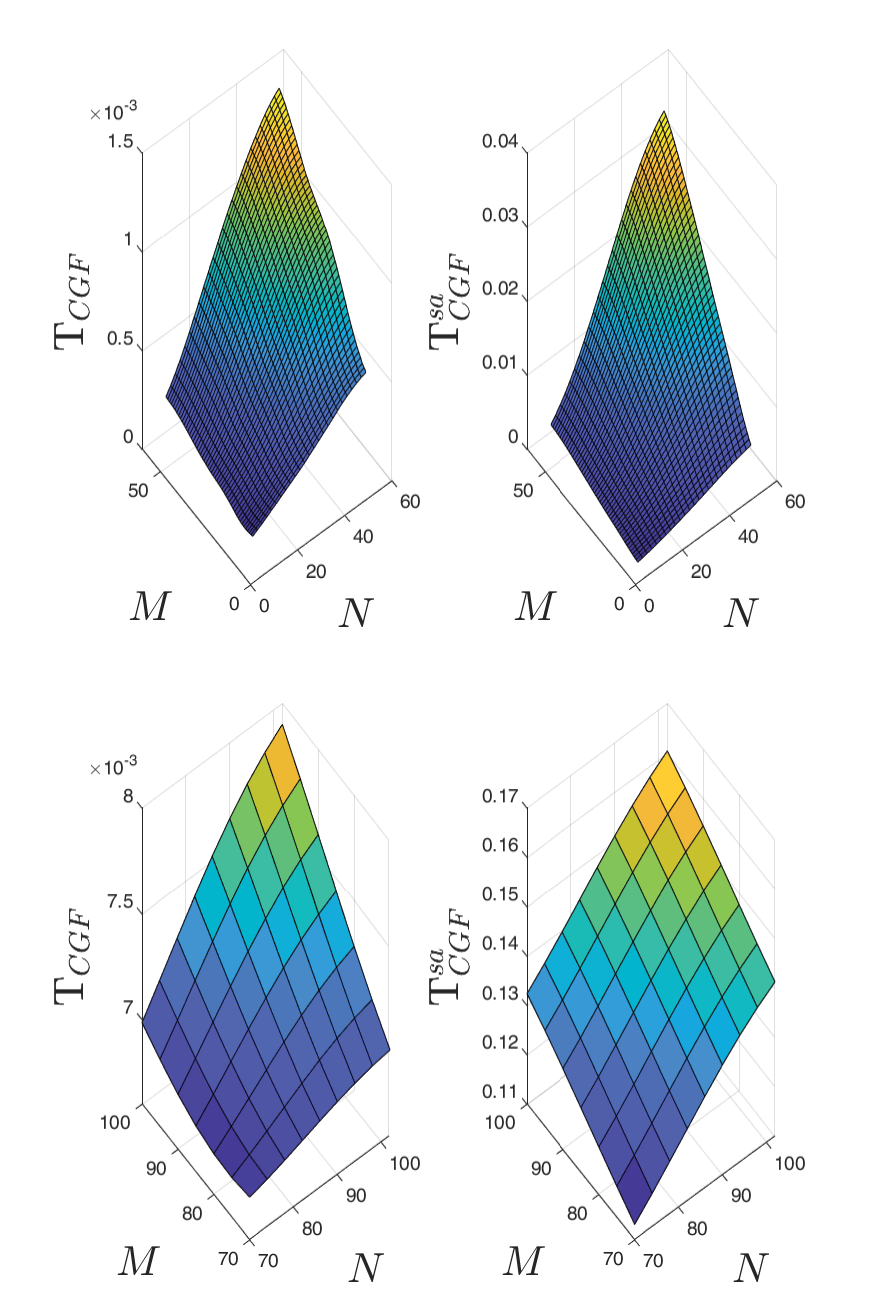}
\caption{The surfaces depict the median execution times (in seconds) of the FGIM and the SA-FGIM, averaged over multiple runs, for $\lambda = -0.4$, $N = 4:2:52$, $M = 4:52$ (upper row), and $N = 70:4:102$ and $M = 70:5:100$ (lower row). $N_0 = N + 2$ in all cases. Gaussian filtering was applied for smoothing the surfaces. The upper left surface was computed without parallel computing, while the lower left utilizes parallel processing.}
\label{fig:3_1}
\end{figure}

\begin{figure}[t]
\centering
\includegraphics[scale=0.6]{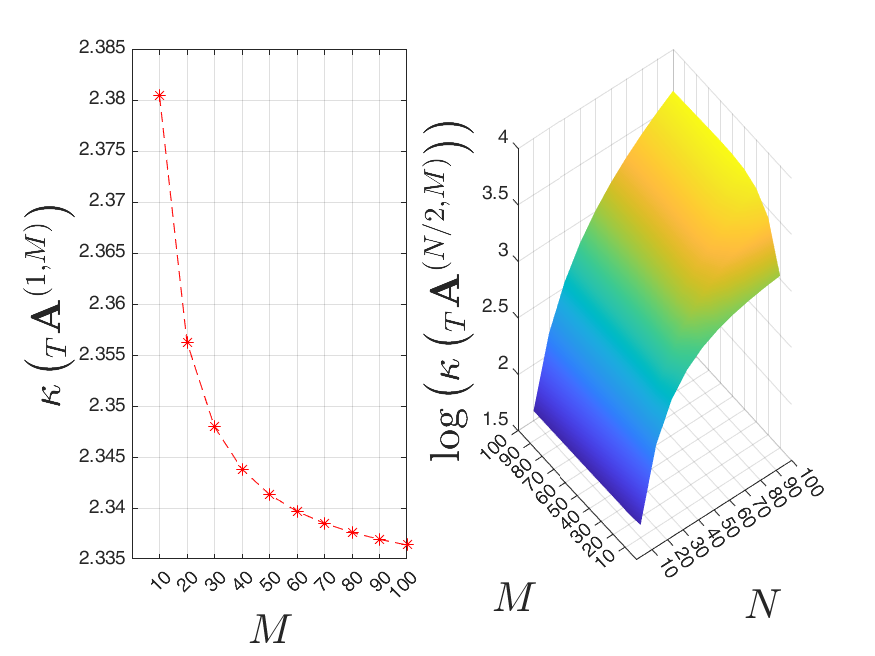}
\caption{The condition number curve (left) and its logarithmic surface (right) of the collocation matrix of System \eqref{eq:nnmm3Hi2_2h1}, associated with the FGIM at the fundamental and Nyquist frequencies. The plots were generated using the parameters $\lambda = -0.4$, $N = M = 10:10:100$. $N_0 = N + 2$ in all cases.}
\label{fig:4_1}
\end{figure}

\begin{table}[t]
\centering
\caption{Comparisons for Test Problem 1 at $t = 0.1$ and $\nu = 1$. The errors are rounded to 5 significant digits.}
\resizebox{0.4\textwidth}{!}{  
\begin{tabular}{ccc}  
\toprule
Method & Parameters & $L_{\infty}$ \\
\midrule
\cite{demir2012numerical} & $\Delta t = 0.00001, h = 0.00625$ & 2.3781e-05\\ 
\cite{jena2021computational} & $\Delta t = 0.00001, h = 0.00625$ & 3.1585e-10\\
\cite{jena2023one} & $\Delta t = 0.00001, h = 0.00625$ & 8.8459e-12 \\
FGIM & \multrow{$N=4, N_0 = 6, M=8, \lambda = -0.4$\\$N=4, N_0 = 6, M=10, \lambda = -0.4$} & \multrow{1.6445e-12\\7.21645e-16}\\
SA-FGIM & $N=4, N_0 = 6$ & 5.5511e-17\\
\bottomrule
\end{tabular}
}
\label{tab:1}
\end{table}

\textbf{Test Problem 2.} Consider Problem S with $\mu = 0, \nu = 1/\pi^2$, and $L = 2$ with initial and boundary functions $u_0(x) = \sin(\pi x)$ and $g(t) = 0$. The exact solution of this problem is $u(x,t) = e^{-t} \sin(\pi x)$ \cite{mohebbi2010high}. Figure \ref{fig:2_1hey} shows the surfaces of the exact and approximate solutions and their errors obtained by the FGIG and the SA-FGIG methods for some parameter values. Table \ref{tab:2_1} shows a comparison between the proposed methods and the CN and CBVM methods of \citet{sun2003high} as well as the method of \citet{mohebbi2010high}. The former methods combine fourth-order boundary value methods (BVMs) for time discretization with a fourth-order compact difference scheme for spatial discretization. The latter method utilizes a compact FD approximation of fourth-order accuracy to discretize spatial derivatives, and the resulting system of ordinary differential equations is then solved using the cubic $C^1$-spline collocation method. Both FGIM and SA-FGIM outperform these methods in terms of accuracy and computational cost, as shown in the table. In particular, the SA-FGIM exhibits the lowest $L_{\infty}$-error using $5$ terms of Fourier truncated series. The FGIM method demonstrates a significant improvement in accuracy as the number of temporal mesh points increases. Notably, adding only 3 temporal mesh points to the FGIM configuration leads to a remarkable decrease in error by approximately 5 orders of magnitude.

\begin{figure*}[t]
\centering
\includegraphics[scale=0.35]{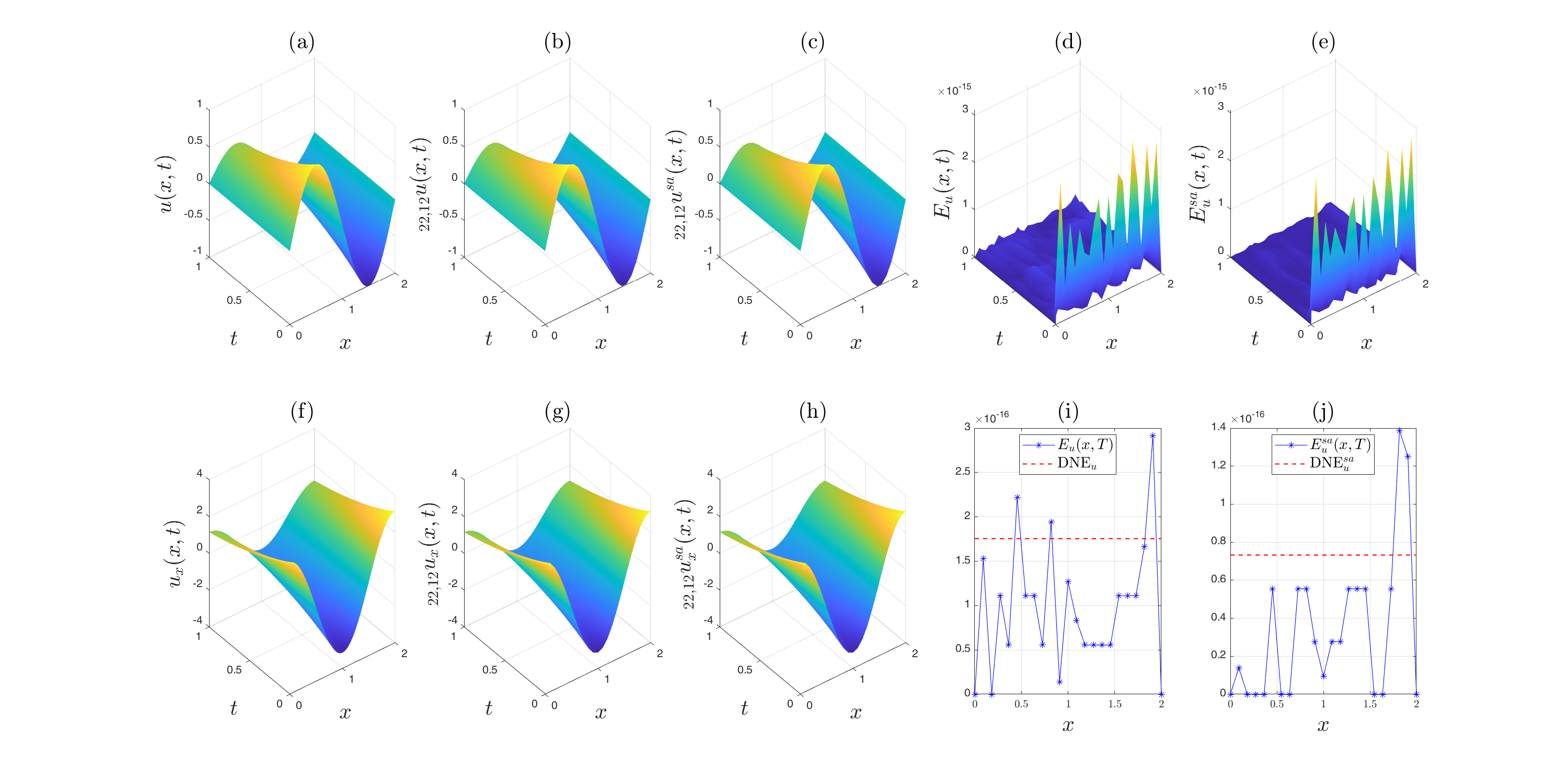}
\caption{Figs. (a)-(h) illustrate the surfaces of the exact and approximate solutions, their spatial derivatives and approximations, and their associated errors for Test Problem 2 using $M = 12, N = 22, N_0 = 24$, and $\lambda = -0.4$. Figs. (i) and (j) depict the absolute error profiles and the discrete norm level at the terminal time, $t = 1$.}
\label{fig:2_1hey}
\end{figure*}

\begin{table}[t]
\centering
\caption{Comparisons for Test Problem 2 at $t = 1$ and $\nu = 1/\pi^2$. The errors are rounded to 5 significant digits.}
\resizebox{0.4\textwidth}{!}{  
\begin{tabular}{ccc}  
\toprule
Method & Parameters & $L_{\infty}$ \\
\midrule
CN \cite{sun2003high} & $h = k = 0.00625$ & 1.1e-5\\
CBVM \cite{sun2003high} & $h = k = 0.00625$ & 2.3e-10\\
\cite{mohebbi2010high} & $h = k = 0.00625$ & 2.3436e-10\\ 
FGIM & \multrow{$N=4, N_0 = 6, M=7, \lambda = -0.4$\\$N=4, N_0 = 6, M=10, \lambda = -0.4$} & \multrow{7.0965e-11\\8.3267e-16}\\
SA-FGIM & $N=4, N_0 = 6$ & 5.5511e-17\\
\bottomrule
\end{tabular}
}
\label{tab:2_1}
\end{table}

\textbf{Test Problem 3.} Consider Problem S with $\mu = 0.01, \nu = 0.1$, and $L = 2$ with initial and boundary functions $u_0(x) = \sin(2 \pi x/L)$ and $g(t) = -e^{-\pi^2 \nu t} \sin(2 \pi \mu t/L)$. The exact solution of this problem is $u(x,t) = -e^{-\pi^2 \nu t} \sin[\pi (\mu t- 2 x/L)]$. Figure \ref{fig:Fig3_1hey} shows the surfaces of the exact and approximate solutions and their errors obtained by the FGIG and the SA-FGIG methods for some parameter values. Both methods resolve the solution surface well within an error of $O\left(10^{-3}\right)$ using as low as $16$ Fourier series terms and 5 time mesh points.

\begin{figure*}[t]
\centering
\includegraphics[scale=0.35]{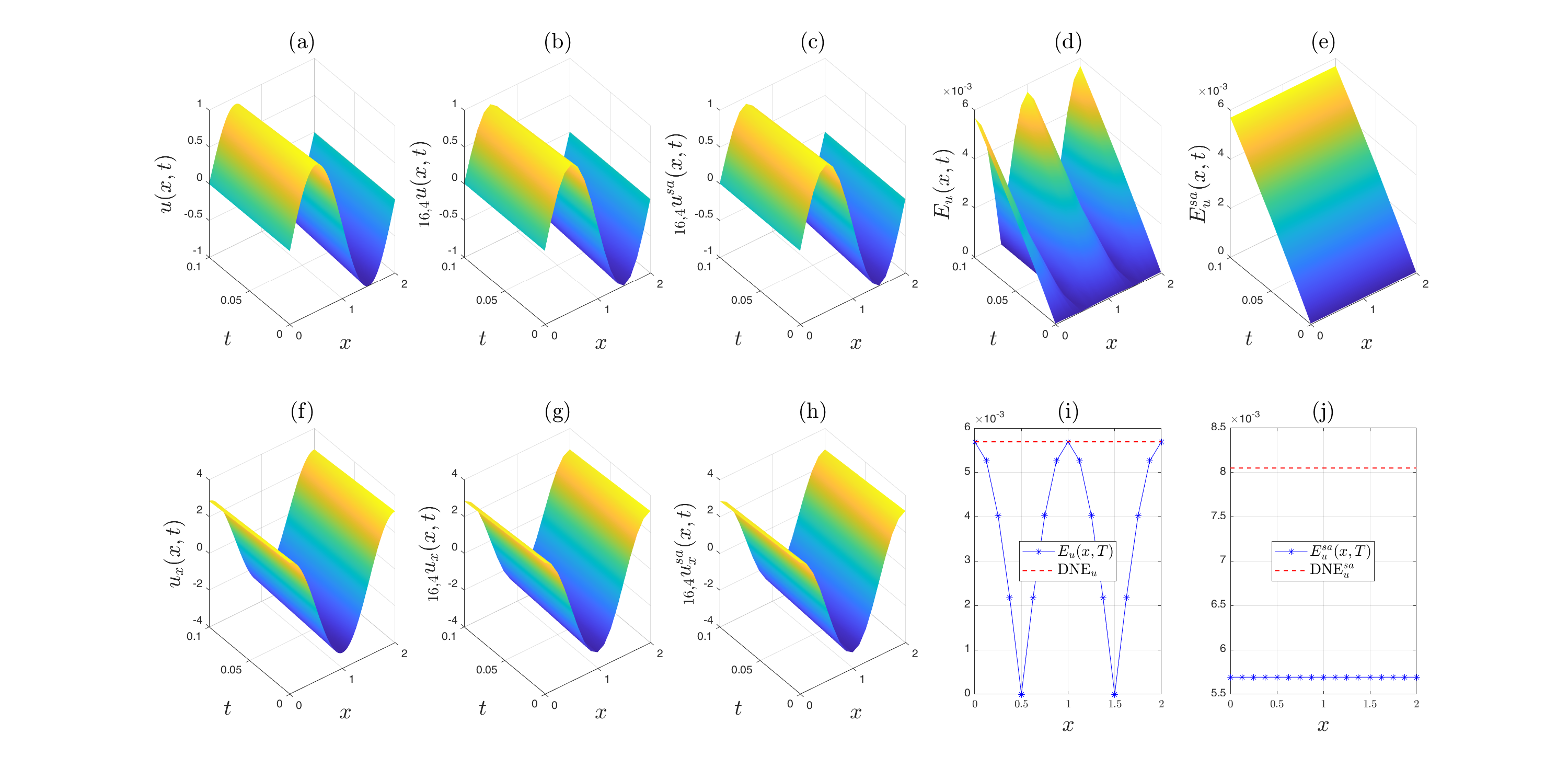}
\caption{Figs. (a)-(h) illustrate the surfaces of the exact and approximate solutions, their spatial derivatives and approximations, and their associated errors for Test Problem 3 using $M = 4, N = 16, N_0 = 18$, and $\lambda = -0.4$. Figs. (i) and (j) depict the absolute error profiles and the discrete norm level at the terminal time, $t = 0.1$.}
\label{fig:Fig3_1hey}
\end{figure*}

\section{Conclusion and Discussion}
\label{sec:Conc}
In this work, we introduced the FGIG method, a novel approach for solving the 1D AD equation with periodic boundary conditions. The method integrates Fourier series and Gegenbauer polynomials within a Galerkin framework. The FGIG method has proven particularly effective in resolving the intricacies of diffusion-dominated transport phenomena. Unlike traditional numerical methods, the FGIG method eliminates the need for iterative time-stepping by solving integral equations directly, significantly reducing computational overhead and temporal errors, and capturing the smooth and spreading behavior characteristic of diffusion-dominated regimes with excellent accuracy. Numerical experiments confirmed exponential convergence for smooth solutions and demonstrated stability under oscillatory conditions, particularly for Gegenbauer parameters within specified ranges. The FGIG method's inherent parallelizability enables efficient computation across multi-core architectures, making it suitable for large-scale simulations. Furthermore, the barycentric formulation of SGG quadrature ensures high precision in integral evaluations, a feature not commonly available in other numerical methods. This novel framework also facilitates the derivation of semi-analytical solutions, providing a benchmark for verifying numerical results. The FGIG method establishes a robust foundation for advancing numerical solutions of partial differential equations in computational science and engineering.

\section{Limitations of the FGIG Method and Future Works}
\label{sec:LCM1}
The FGIG method has demonstrated exceptional performance in diffusion-dominated problems, where solutions exhibit smooth and spreading behavior. However, its application to advection-dominated problems, characterized by steep gradients and sharp features, may be limited. In particular, the time evolution of the Fourier coefficients can become complex and difficult to predict in this case, especially for high P\'{e}clet numbers. This is because the advection term can introduce significant phase shifts and amplitude variations in the coefficients. In such scenarios, the current method often cannot accurately capture the sharp features of the solution. To address these limitations, future research could integrate adaptive mesh refinement strategies. This approach would strategically concentrate computational resources in regions exhibiting high gradients, thereby improving the method's ability to resolve sharp features and accurately simulate advection-dominated phenomena. Another promising avenue is extending the FGIG framework to higher-dimensional problems and coupled systems, broadening its applicability. Exploring hybrid approaches that combine the FGIG method with possibly machine learning techniques offers another exciting direction. These methods could use data-driven insights to improve computational efficiency and accuracy for complex real-world problems.

\section*{Declarations}
\subsection*{Competing Interests}
The author declares there is no conflict of interests.

\subsection*{Availability of Supporting Data}
The author declares that the data supporting the findings of this study are available within the article.

\subsection*{Ethical Approval and Consent to Participate and Publish}
Not Applicable.

\subsection*{Human and Animal Ethics}
Not Applicable.

\subsection*{Consent for Publication}
Not Applicable.

\subsection*{Funding}
The author received no financial support for the research, authorship, and/or publication of this article.

\subsection*{Authors' Contributions}
The author confirms sole responsibility for the following: study conception and design, data collection, analysis and interpretation of results, and manuscript preparation.


\appendix
\section{Mathematical Proofs}
\label{sec:MP1}
The following theorem establishes the $L^2$ nature of $\C{I}_x^{(x)} \psi + g$, provided that $\psi \in L^2$. 
\begin{thm}\label{thm:1}
Let the SSD of Problem S, $\psi \in L^2(\FOmega_{L \times T})$. Then $u = \C{I}_x^{(x)} \psi + g \in L^2(\FOmega_{L \times T})\;\forall \{L, T\} \subset \MBRP$. 
\end{thm}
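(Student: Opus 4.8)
The plan is to estimate $\|u\|_{L^2(\FOmega_{L\times T})}$ directly from the representation $u = \C{I}_x^{(x)}\psi + g$ given in Eq. \eqref{eq:AS1}, splitting it into the antiderivative term and the boundary term by Minkowski's inequality and bounding each separately. Throughout I would use that $\FOmega_{L\times T}$ has finite Lebesgue measure, so that $\psi \in L^2(\FOmega_{L\times T}) \subset L^1(\FOmega_{L\times T})$ and Tonelli's theorem applies to any nonnegative iterated integral without further comment; the standing datum $g = g(t)$ of Problem S is taken in $L^2(\FOmega_T)$ (it suffices, and is true in every test case, that $g \in L^\infty(\FOmega_T)$).

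For the antiderivative term, fix $t \in \FOmega_T$ and $x \in \FOmega_L$. The Cauchy--Schwarz inequality in the $s$-variable gives $\bigl|\C{I}_x^{(x)}\psi\bigr|^2 = \bigl|\int_0^x \psi(s,t)\,ds\bigr|^2 \le x \int_0^x |\psi(s,t)|^2\,ds \le L \int_0^L |\psi(s,t)|^2\,ds$, using $x \le L$ and nonnegativity of the integrand in the last step. Integrating in $x$ over $\FOmega_L$ and then in $t$ over $\FOmega_T$, and interchanging the iterated integrals by Tonelli, I obtain $\bigl\|\C{I}_x^{(x)}\psi\bigr\|_{L^2(\FOmega_{L\times T})}^2 \le L^2\,\|\psi\|_{L^2(\FOmega_{L\times T})}^2 < \infty$; the measurability of $(x,t)\mapsto \C{I}_x^{(x)}\psi$ on $\FOmega_{L\times T}$ is itself a consequence of Fubini's theorem applied to $\psi \in L^1(\FOmega_{L\times T})$. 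For the boundary term, $g$ depends on $t$ alone, whence $\|g\|_{L^2(\FOmega_{L\times T})}^2 = L\,\|g\|_{L^2(\FOmega_T)}^2 < \infty$. Combining the two bounds through Minkowski's inequality yields $\|u\|_{L^2(\FOmega_{L\times T})} \le L\,\|\psi\|_{L^2(\FOmega_{L\times T})} + \sqrt{L}\,\|g\|_{L^2(\FOmega_T)} < \infty$ for every $\{L,T\}\subset\MBRP$, which is the assertion.

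I do not anticipate a genuine obstacle: the whole argument is a one-line Cauchy--Schwarz estimate together with Fubini--Tonelli bookkeeping, and the only point requiring a modicum of care is confirming the measurability and the finiteness that license the interchange of integrals — both of which follow immediately from the finiteness of $|\FOmega_{L\times T}|$ and the inclusion $L^2(\FOmega_{L\times T}) \subset L^1(\FOmega_{L\times T})$. If one prefers to avoid citing $g \in L^2(\FOmega_T)$ as data, the identical estimate survives with $\sqrt{L}\,\|g\|_{L^2(\FOmega_T)}$ replaced by $\sqrt{LT}\,\|g\|_{L^\infty(\FOmega_T)}$ whenever $g \in L^\infty(\FOmega_T)$.
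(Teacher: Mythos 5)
Your proof is correct and follows essentially the same route as the paper: the key step in both is the Cauchy--Schwarz bound $\bigl|\C{I}_x^{(x)}\psi\bigr|^2 \le x\,\C{I}_x^{(x)}|\psi|^2 \le L\,\C{I}_x^{(x)}|\psi|^2$ followed by Fubini--Tonelli, with the boundary term handled by the boundedness (or square-integrability) of $g$ on $\FOmega_T$. The only cosmetic difference is that you separate the two contributions via Minkowski's inequality where the paper expands $|u|^2$ and bounds the cross term directly; your explicit attention to measurability and to the precise hypothesis needed on $g$ is a small improvement on the paper's presentation.
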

\begin{proof}
First, notice that 
\begin{equation}\label{eq:Nice0}
\C{I}_L^{(x)}\C{I}_T^{(t)}|u{|^2} = \C{I}_L^{(x)}\C{I}_T^{(t)}\left( {(\C{I}_x^{(x)}\psi)^2 + 2 \C{I}_x^{(x)}\psi g + g^2} \right).
\end{equation}
Applying Cauchy-Schwarz inequality on the first integrand term gives
\begin{equation}\label{eq:Nice1}
|\C{I}_x^{(x)}\psi {|^2} \le x\,\C{I}_x^{(x)}|\psi {|^2} \le L\,\C{I}_x^{(x)}|\psi {|^2} < \infty,\quad \forall L \in \MBR^+.
\end{equation}
Using Fubini's theorem to swap the order of integration gives
\begin{gather*}
\C{I}_L^{(x)}\C{I}_T^{(t)} {|\C{I}_x^{(x)}\psi|^2} \le \C{I}_L^{(x)}\C{I}_T^{(t)}\left( {x\,\C{I}_x^{(x)}|\psi {|^2}} \right) = \C{I}_T^{(t)}\C{I}_L^{(\xi )}\left( {\left[ {\C{I}_{\xi ,L}^{(x)}x} \right]\,|\psi {|^2}} \right)\\
 = \C{I}_T^{(t)}\C{I}_L^{(\xi )}\left( {\left[ {\frac{{{L^2} - {\xi ^2}}}{2}} \right]\,|\psi {|^2}} \right) < \infty,
\end{gather*}
since $\psi \in L^2(\FOmega_{L \times T})$, by assumption. The inequality
\[\C{I}_L^{(x)}\C{I}_T^{(t)}\left( {2\C{I}_x^{(x)}\psi g + g^2} \right) < \infty,\]
follows readily by Ineq. \eqref{eq:Nice1} and the boundedness of $g$ on $\FOmega_T$. This completes the proof of the theorem.
\end{proof}

The following theorem proves the $\beta$-analyticity of the exact solution of Test Problem 1.

\begin{thm}\label{thm:MP1}
The function $u(x,t) = e^{-\pi^2 t} \sin(\pi x)$ is $\beta$-analytic in $x\,\forall (x,t) \in \FOmega_{L \times T}: L < 2$. 
\end{thm}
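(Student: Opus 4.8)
The plan is to verify the two defining properties of a $\beta$-analytic function directly for the map $x \mapsto u(x,t)$, with $t \in \FOmega_T$ held fixed and treated as a bounded parameter: write $u(x,t) = c(t)\,\sin(\pi x)$ with $c(t) = e^{-\pi^2 t}$, so that $0 < c(t) \le 1$ on $\FOmega_T$.

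First I would settle the analyticity structure. Since $z \mapsto \sin(\pi z)$ is entire, the map $z \mapsto c(t)\sin(\pi z)$ is entire for each fixed $t$, hence analytic on an open set containing the strip $\F{C}_{L,\beta}$ for every $\beta > 0$; combined with the smoothness of $u(\cdot,t)$ on $\FOmega_L$ and the matching of $u(\cdot,t)$ together with all its $x$-derivatives at the two endpoints of $\FOmega_L$, this places $u(\cdot,t)$ in $\C{A}_{L,\beta}$ for all $\beta > 0$, and a fortiori makes it analytic on $\F{C}_{L,\infty}$, which is the first requirement.

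The core of the proof is the growth estimate. For $z = a + ib$ with $a \in [0,L]$ and $|b| \le \beta$, the addition formula gives $\sin(\pi z) = \sin(\pi a)\cosh(\pi b) + i\cos(\pi a)\sinh(\pi b)$, so that
\[
|\sin(\pi z)|^2 = \cosh^2(\pi b) - \cos^2(\pi a) \le \cosh^2(\pi\beta),
\]
whence $\|u(\cdot,t)\|_{\C{A}_{L,\beta}} = c(t)\,\|\sin(\pi z)\|_{L^\infty(\F{C}_{L,\beta})} \le \cosh(\pi\beta) \le e^{\pi\beta}$, uniformly in $t \in \FOmega_T$. Since here $\omega_\beta = 2\pi\beta/L$, this yields
\[
\frac{\|u(\cdot,t)\|_{\C{A}_{L,\beta}}}{e^{\omega_\beta}} \le e^{\pi\beta - 2\pi\beta/L} = e^{-\pi\beta(2-L)/L},
\]
and for $L < 2$ the exponent is strictly negative, so the right-hand side tends to $0$ as $\beta \to \infty$. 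This is precisely the decay condition in the definition of $\beta$-analyticity, and it holds uniformly in $t$, so $u(\cdot,t)$ is $\beta$-analytic in $x$ for every $t \in \FOmega_T$, which completes the proof.

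I expect the only delicate point to be the routine bookkeeping in the first step, namely confirming membership in the ambient space $\C{H}_L^\infty$ underlying each $\C{A}_{L,\beta}$ for the smooth periodic function at hand; all the genuine work sits in the elementary strip bound for $\sin(\pi z)$, which also shows the threshold $L < 2$ is sharp — taking $a = \min\{1/2, L\}$ and $b = \beta$ one finds $|\sin(\pi z)|$ of order $\tfrac12 e^{\pi\beta}$, so the above ratio fails to vanish once $L \ge 2$.
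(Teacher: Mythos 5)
Your proof is correct and follows essentially the same route as the paper: expand $\sin(\pi z)$ via the addition formula, bound the modulus on the strip by $\cosh(\pi\beta)$, and compare the growth $e^{\pi\beta}$ against $e^{\omega_\beta}=e^{2\pi\beta/L}$ to conclude the ratio vanishes exactly when $L<2$. If anything, your pointwise identity $|\sin(\pi z)|^2=\cosh^2(\pi b)-\cos^2(\pi a)$ is slightly more careful than the paper's statement $|u(x+iy)|=e^{-\pi^2 t}\cosh(\pi y)$, which is really only the supremum over the strip, and your closing remark on sharpness of the threshold $L<2$ is a small bonus not in the paper.
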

\begin{proof}
First, notice that $u$ is analytic on ${\F{C}_{L,\infty}}\,\forall t \in \FOmega_T$, because the sine function is entire. 
Observe also that
\begin{gather}
u(x+iy) = e^{-\pi^2 t} \sin(\pi (x+iy))\\
= e^{-\pi^2 t} \left[\sin(\pi x) \cosh(\pi y) + i \cos(\pi x) \sinh(\pi y)\right],
\end{gather}
so
\begin{equation}
|u(x+iy)| = e^{-\pi^2 t} \cosh(\pi y),
\end{equation}
using the identity $\cosh^2(\pi y) - \sinh^{2}(\pi y) = 1$. On ${\F{C}_{L,\beta}}$, the supremum occurs at $y = \pm \beta$, so
\begin{equation}
{\left\|u\right\|_{{\C{A}_{L,\beta}}}} = e^{-\pi^2 t} \cosh(\pi \beta).
\end{equation}
Consequently,
\begin{equation}
{\lim _{\beta  \to \infty }}\frac{{{{\left\| u \right\|}_{{\C{A}_{T,\beta} }}}}}{{{e^{{\omega _\beta }}}}} = e^{-\pi^2 t} {\lim _{\beta  \to \infty }}\frac{\cosh(\pi \beta)}{{{e^{{2 \pi \beta/L}}}}} \sim e^{-\pi^2 t} {\lim _{\beta  \to \infty }}\frac{\frac{1}{2} e^{\pi \beta}}{{{e^{{2 \pi \beta/L}}}}} = 0,\quad \forall L < 2.
\end{equation}
This completes the proof of the theorem.
\end{proof}

The following theorem highlights the existence of relatively small singular values of the GIM $\F{Q}$, as $\lambda \to -0.5$.

\begin{thm}\label{thm:Woowmmm1}
There exists at least one near zero singular value of $\F{Q}$, not necessarily the smallest singular value, when $\lambda \to -0.5$.
\end{thm}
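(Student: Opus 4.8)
The plan is to produce a single row of $\F{Q}$ whose Euclidean length tends to $0$ as $\lambda\to-1/2^{+}$, and then read off a vanishing singular value from the elementary estimate
\[
\sigma_{\min}(\F{Q}) = \min_{\|\bm{x}\|_2=1}\bigl\|\F{Q}^{\top}\bm{x}\bigr\|_2 \le \bigl\|\F{Q}^{\top}\bm{e}\bigr\|_2 = \bigl\|\F{Q}_{0}\bigr\|_2,
\]
with $\bm{e}$ the coordinate vector picking out the first row $\F{Q}_{0}$ of $\F{Q}$ (the one associated with the leftmost GG node $z_{M,0}^{\lambda}$). Since $\F{Q}$ and $\F{Q}^{\top}$ share their singular values, this actually yields the slightly stronger statement that the \emph{smallest} singular value of $\F{Q}$ vanishes in the limit. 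Throughout, $M\ge 1$ is held fixed.

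First I would recall the precise description of $\F{Q}$. With $z_{M,0:M}^{\lambda}$ the GG nodes (the zeros of the $(M+1)$st-degree Gegenbauer polynomial $C_{M+1}^{\lambda}$) and $\ell_{0:M}^{(\lambda)}$ the associated Lagrange cardinal polynomials, the barycentric GIM of \cite{Elgindy20171} has entries $q_{l,j}=\C{I}_{-1,\,z_{M,l}^{\lambda}}^{(t)}\ell_{j}^{(\lambda)}$; the barycentric representation only alters how these integrals are \emph{evaluated}, not their values. Hence the row attached to the leftmost node is $\F{Q}_{0}=\bigl[\,\C{I}_{-1,\,z_{M,0}^{\lambda}}^{(t)}\ell_{j}^{(\lambda)}\,\bigr]_{j=0}^{M}$, i.e.\ an integration over the short interval $[-1,z_{M,0}^{\lambda}]$.

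The analytic heart of the argument is that the extreme GG nodes collapse onto the endpoints: $z_{M,0}^{\lambda}\to-1$ and $z_{M,M}^{\lambda}\to 1$ as $\lambda\to-1/2^{+}$. I would establish this by observing that the coefficients of $C_{M+1}^{\lambda}$ are analytic in $\lambda$ at $\lambda=-1/2$ — the normalising ratios $\Gamma(M+1-k+\lambda)/\Gamma(\lambda)$ entering its power-series coefficients have no pole there, and the leading coefficient $2^{M+1}\Gamma(M+1+\lambda)/(\Gamma(\lambda)(M+1)!)$ is finite and nonzero at $\lambda=-1/2$ — so $C_{M+1}^{\lambda}$ converges coefficientwise to a genuine degree-$(M+1)$ polynomial $C_{M+1}^{-1/2}$. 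Since $C_{n}^{-1/2}(\pm1)=\Gamma(n-1)/(\Gamma(-1)\,\Gamma(n+1))=0$ for $n\ge 2$, the points $\pm1$ are (simple) zeros of $C_{M+1}^{-1/2}$, and continuity of polynomial roots with respect to the coefficients then forces $z_{M,0}^{\lambda}\to-1$, $z_{M,M}^{\lambda}\to1$, with the remaining $M-1$ nodes converging to distinct interior points of $(-1,1)$. The same node convergence makes each $\ell_{j}^{(\lambda)}$ converge coefficientwise — hence uniformly on $[-1,1]$ — to $\ell_{j}^{(-1/2)}$, so $\max_{t\in[-1,1]}|\ell_{j}^{(\lambda)}(t)|$ stays bounded as $\lambda\to-1/2^{+}$.

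Putting the pieces together, $|q_{0,j}|\le\bigl(z_{M,0}^{\lambda}+1\bigr)\max_{t\in[-1,1]}|\ell_{j}^{(\lambda)}(t)|\to0$ for every $j$, whence $\|\F{Q}_{0}\|_2\to0$ and therefore $\sigma_{\min}(\F{Q})\to0$; equivalently, expanding $\det\F{Q}$ along its first row shows $\det\F{Q}\to0$ while $\|\F{Q}\|_2$ stays bounded, again exhibiting a singular value tending to $0$. I expect the main obstacle to be the third paragraph — rigorously confirming that the limiting ultraspherical polynomial $C_{M+1}^{-1/2}$ is well defined, of full degree, and has simple zeros at $\pm1$, so that the extreme GG nodes genuinely migrate to the endpoints. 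Once that node behaviour is secured, the row estimate and the singular-value conclusion are routine.
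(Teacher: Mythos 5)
Your proof is correct, but it takes a genuinely different route from the paper's. The paper's own argument is a short linear-algebra reduction: it takes as its premise the (numerically observed, discussed around Figures \ref{fig:0_1}--\ref{fig:0_5}) fact that $\F{Q}$ has an eigenvalue $\lambdabar_{\F{Q}}\approx 0$ as $\lambda\to-0.5$, picks an approximate null vector $\bmv$ with $\F{Q}\bmv\approx\bmzer$, and notes that then $\F{Q}^{\top}\F{Q}\bmv\approx\bmzer$, so $\F{Q}^{\top}\F{Q}$ has a near-zero eigenvalue and hence $\F{Q}$ has a near-zero singular value $\sigma=\sqrt{\lambdabar_{\F{Q}^{\top}\F{Q}}}$. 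The analytical burden there is carried entirely by the eigenvalue premise, which the paper supports empirically rather than proves. You instead identify the mechanism directly from the quadrature structure: as $\lambda\to-1/2^{+}$ the symmetric Jacobi parameter tends to $-1$, the limiting polynomial acquires $(1-t^{2})$ as a factor, the extreme GG nodes collapse onto $\pm1$, and therefore the row of $\F{Q}$ attached to the leftmost node — an integral of uniformly bounded cardinal polynomials over the shrinking interval $[-1,z_{M,0}^{\lambda}]$ — tends to zero in norm, whence $\sigma_{\min}(\F{Q})\le\|\F{Q}_{0}\|_{2}\to0$. Your version is self-contained (no appeal to numerics), pins down \emph{which} degeneracy causes the ill-conditioning, and actually delivers a strictly stronger conclusion than the theorem claims, namely that the \emph{smallest} singular value vanishes in the limit; the paper's version is shorter and dovetails with the eigenvalue plots it has already presented. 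Two minor points worth tightening if you write yours up: the zeros of the Gegenbauer polynomial are unaffected by the particular normalization used in the paper's references (the normalizing constant is finite and nonzero for all $\lambda>-1/2$), so your coefficientwise-convergence argument for the standard normalization does transfer; and since the node ordering convention varies, you should simply say ``the row associated with whichever extreme node tends to $-1$'' rather than committing to index $0$.
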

\begin{proof}
Since $\F{Q}$ has at least one eigenvalue $\lambdabar_{\F{Q}} \approx 0$, as $\lambda \to -0.5$, then there exists a nonzero vector $\bmv: \F{Q} \bmv \approx \bmzer$. Therefore,
\begin{equation}
\F{Q}^{\top} \F{Q} \bmv = \F{Q}^{\top} (\F{Q} \bmv) \approx \F{Q}^{\top} \bmzer \approx \bmzer.
\end{equation}
This implies that $\bmv$ is approximately an eigenvector of $\F{Q}^{\top} \F{Q}$ with an eigenvalue $\lambdabar_{\F{Q}^{\top} \F{Q}} \approx 0$, and a corresponding singular value, $\sigma$, given by
\begin{equation}
\sigma = \sqrt{\lambdabar_{\F{Q}^{\top} \F{Q}}} \approx 0,
\end{equation} 
from which the proof is accomplished.
\end{proof}

The following theorem highlights the peak of \Kappa$\left({}_T\F{A}^{(n,M)}\right)$ among all frequencies, assuming that ${}_T\F{Q}$ is diagonalizable, which is often the case as observed by extensive numerical experiments.

\begin{thm}\label{thm:poyopi1}
Assume that ${}_T\F{Q}$ is a diagonalizable matrix. If $\mu$ and $\nu$ do not vanish simultaneously, then 
\begin{equation}
\max\limits_{n \in \MBN_{N/2}}\text{\Kappa}\left({}_T\F{A}^{(n,M)}\right) = \text{\Kappa}\left({}_T\F{A}^{(N/2,M)}\right).
\end{equation}
\end{thm}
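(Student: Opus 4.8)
The plan is to pass to the eigen\-decomposition of ${}_T\F{Q}$, read off the spectrum of each ${}_T\F{A}^{(n,M)}$, and then argue that the frequency index $n=N/2$ produces the most spread-out spectrum among all $n\in\MBN_{N/2}$. Since the eigenvector structure of ${}_T\F{Q}$ does not depend on $n$, the whole dependence on $n$ enters only through the scalar $\alpha_n$, and the problem reduces to tracking a single scalar quantity along the finite set $\{\alpha_1,\dots,\alpha_{N/2}\}$.

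First I would write ${}_T\F{Q}=\F{V}\bs{\Lambda}\F{V}^{-1}$ with $\bs{\Lambda}=\diag(\lambdabar_{0:M})$, which is admissible by the diagonalizability hypothesis. Because ${}_T\F{A}^{(n,M)}=\F{I}_{M+1}+\alpha_n\,{}_T\F{Q}=\F{V}\bigl(\F{I}_{M+1}+\alpha_n\bs{\Lambda}\bigr)\F{V}^{-1}$, its eigenvalues are $\hlambdabar_j^{(n)}=1+\alpha_n\lambdabar_j$, $j\in\MBJP_M$, and the eigenvector matrix $\F{V}$ is the \emph{same} for every $n$. Measuring vectors in the norm $\|\cdot\|_{\F{V}}:=\|\F{V}^{-1}\cdot\|_2$, the condition number of ${}_T\F{A}^{(n,M)}$ equals exactly $r(\alpha_n):=\max_j|1+\alpha_n\lambdabar_j|/\min_j|1+\alpha_n\lambdabar_j|$, and the Euclidean condition number differs from $r(\alpha_n)$ by at most the $n$-independent factor $\Kappa(\F{V})^2$. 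Hence it suffices to show that $r$ is maximized over $\{\alpha_1,\dots,\alpha_{N/2}\}$ at $\alpha_{N/2}$.

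Next I would pin down the geometry of the set $\{\alpha_n\}$. With $\omega_n=2\pi n/L$ we have $\alpha_n=\omega_n(\nu\omega_n+\mu i)=\nu\omega_n^2+i\mu\omega_n$, so every $\alpha_n$ lies in the closed first quadrant, and, since $\mu,\nu\ge 0$ are not both zero while $\omega_n$ is strictly increasing on $\{1,\dots,N/2\}$, the modulus $|\alpha_n|=\omega_n\sqrt{\nu^2\omega_n^2+\mu^2}$ is strictly increasing in $n$; in particular $|\alpha_{N/2}|=\max_{n\in\MBN_{N/2}}|\alpha_n|$. I then convert this into a statement about $r$ using the regime analysis of Section~\ref{sec:ESA1}: when $|\alpha_n|\,\|{}_T\F{Q}\|$ is small the $\hlambdabar_j^{(n)}$ cluster near $1$ and $r(\alpha_n)\approx 1$, whereas for large $|\alpha_n|$ one has $\hlambdabar_j^{(n)}\sim\alpha_n\lambdabar_j$ and $r(\alpha_n)\to\Kappa({}_T\F{Q})$, with $r$ rising (non-strictly) monotonically across this transition as $|\alpha_n|$ grows along the first-quadrant parabolic arc $\{\alpha_n\}$. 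Combining monotonicity in $|\alpha_n|$ with $|\alpha_{N/2}|=\max_n|\alpha_n|$ yields $\max_{n\in\MBN_{N/2}}\Kappa\bigl({}_T\F{A}^{(n,M)}\bigr)=\Kappa\bigl({}_T\F{A}^{(N/2,M)}\bigr)$.

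The main obstacle is exactly that monotonicity step: along the arc $\{\alpha_n\}$ the parameter does not merely grow in modulus, it also rotates toward the positive real axis (its argument $\arctan(\mu/(\nu\omega_n))$ decreases in $n$ when $\nu>0$), so one must rule out the possibility that this rotation creates an interior maximum of $r$. This is where I would invoke the structural facts about the Gegenbauer integration matrix recorded earlier — the clustering of the eigenvalues $\lambdabar_j$ near the origin and the two limiting regimes $r\to 1$ (small $\alpha_n$) and $r\to\Kappa({}_T\F{Q})$ (large $\alpha_n$) — together with the diagonalizability assumption and the numerical evidence of Figures~\ref{fig:0_6}--\ref{fig:0_9}, to justify that $r$ behaves as a monotone bridge between these two values along the arc. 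Everything else in the argument — the eigenvalue shift-scaling, the $\F{V}$-norm identification, and the strict monotonicity of $|\alpha_n|$ — is elementary.
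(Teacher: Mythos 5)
Your proposal follows essentially the same route as the paper's own proof: diagonalize ${}_T\F{Q} = \F{V}\F{\Lambda}\F{V}^{-1}$, read off the eigenvalues $1+\alpha_n\lambdabar_j$ of ${}_T\F{A}^{(n,M)}$, and reduce the claim to the fact that $|\alpha_n|$ is maximized over $\MBN_{N/2}$ at $n=N/2$. The obstacle you flag honestly --- that the ratio $r(\alpha_n)$ need not grow monotonically along the arc $\{\alpha_n\}$, since the argument of $\alpha_n$ rotates as its modulus grows --- is present, unacknowledged, in the paper's proof as well, which simply asserts that a larger $\alpha_n$ ``generally leads to a larger ratio'' between $1+\alpha_n\lambdabar_{\max}$ and $1+\alpha_n\lambdabar_{\min}$ and stops there. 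If anything you are more careful: the paper writes $\text{\Kappa}\left({}_T\F{A}^{(n,M)}\right)=\text{\Kappa}\left(\F{V}\right)\cdot(1+\alpha_n\lambdabar_{\max})/(1+\alpha_n\lambdabar_{\min})$ as an exact identity, whereas your two-sided bound with the $n$-independent factor $\text{\Kappa}(\F{V})^2$ is the defensible statement, and you make explicit that the monotonicity step rests on the spectral clustering observations and the numerical evidence of Section \ref{sec:ESA1} rather than on a proof. So your argument is no less rigorous than the published one; the gap you identify is a genuine gap common to both.
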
 
\begin{proof}
Since ${}_T\F{Q}$ is a diagonalizable matrix, we can express it as 
\begin{equation}\label{eq:dssdgsgs1}
{}_T\F{Q} = \F{V} \F{\Lambda} \F{V}^{-1}, 
\end{equation}
where $\F{V}$ is a matrix of eigenvectors of ${}_T\F{Q}$ and $\F{\Lambda}$ is a diagonal matrix containing the eigenvalues of ${}_T\F{Q}$. Substituting \eqref{eq:dssdgsgs1} in the collocation matrix gives 
\begin{equation}
{}_T\F{A}^{(n,M)} = \F{I}_{M+1} + \alpha_n\;\F{V} \F{\Lambda} \F{V}^{-1} = \F{V}\left(\F{I}_{M+1} + \alpha_n\; \F{\Lambda}\right) \F{V}^{-1}.
\end{equation}
The eigenvalues of ${}_T\F{A}^{(n,M)}$ are given by the diagonal elements of $\F{I}_{M+1} + \alpha_n\; \F{\Lambda}$, which are $1 + \alpha_n \lambdabar_{0:M}$. Therefore,
\begin{equation}
\text{\Kappa}\left({}_T\F{A}^{(n,M)}\right) = \text{\Kappa}\left(\F{V}\right) \cdot \frac{1 + \alpha_n \lambdabar_{\max}}{1 + \alpha_n \lambdabar_{\min}}.
\end{equation}
As $\alpha_n$ increases, the eigenvalues of ${}_T\F{A}^{(n,M)}$ become more spread out.
This increased spread in eigenvalues generally leads to a larger ratio between $1 + \alpha_n \lambdabar_{\max}$ and $1 + \alpha_n \lambdabar_{\min}$, and, consequently, a higher condition number. The proof is accomplished by realizing that $\max_{n\in \MBN_{N/2}}\alpha_n = \alpha_{N/2}$.
\end{proof}
Theorem \ref{thm:poyopi1} and the analysis conducted in Section \ref{sec:ESA1} reveal further that $\text{\Kappa}\left({}_T\F{A}^{(n,M)}\right) \to \infty\,\foralll N, \lambda$. 

\bibliographystyle{model1-num-names}
\bibliography{Bib}
\end{document}